\title[Positive flow-spines and contact $3$-manifolds]{Positive flow-spines and contact $3$-manifolds}
\author[Ishii]{Ippei Ishii}
\address{Department of Mathematics, Faculty of Science and Technology, Keio University, 3-14-1 Hiyoshi, Kohoku, Yokohama 223-8522, Japan}
\author[Ishikawa]{Masaharu Ishikawa}
\address{Department of Mathematics, Hiyoshi Campus, Keio University, 
4-1-1 Hiyoshi, Kohoku, Yokohama 223-8521, Japan}
\email{ishikawa@keio.jp}
\author[Koda]{Yuya Koda}
\address{
Department of Mathematics and International Institute for Sustainability with Knotted Chiral Meta Matter, Hiroshima University, 1-3-1 Kagamiyama, Higashi-Hiroshima 739-8526, Japan}
\email{ykoda@hiroshima-u.ac.jp}
\author[Naoe]{Hironobu Naoe}
\address{Department of Mathematics, Chuo University, 
1-13-27 Kasuga, Bunkyo-ku, Tokyo, 112-8551, Japan}
\email{naoe@math.chuo-u.ac.jp}
\theoremstyle{plain}
\newtheorem*{theorem*}{Theorem}
\newtheorem*{lemma*} {Lemma}
\newtheorem*{corollary*} {Corollary}
\newtheorem*{proposition*}{Proposition}
\newtheorem*{conjecture*}{Conjecture}
\newtheorem{theorem}{Theorem}[section]
\newtheorem{lemma}[theorem]{Lemma}
\newtheorem{proposition}[theorem]{Proposition}
\newtheorem{remark}[theorem]{Remark}
\theoremstyle{remark}
\newtheorem*{definition}{Definition}
\newtheorem{example}{Example}
\newtheorem*{example*}{Example}
\theoremstyle{definition}
\newtheoremstyle{citing}% name
  {}%      Space above, empty = `usual value'
  {}%      Space below
  {\itshape}% Body font
  {}%         Indent amount (empty = no indent, \parindent = para indent)
  {\bfseries}% Thm head font
  {.}%        Punctuation after thm head
  {.5em}%     Space after thm head: " " = normal interword space;
\theoremstyle{citing}
\newcommand{\Integer}{\mathbb{Z}}
\newcommand{\Real}{\mathbb{R}}
\newcommand{\Complex}{\mathbb{C}}
\newcommand{\Rational}{\mathbb{Q}}
\newcommand{\Int}{\mathrm{Int\,}}
\newcommand{\Nbd}{\mathrm{Nbd}}
\newcommand{\pr}{\mathrm{pr}}
\begin{document}

\begin{abstract}
A flow-spine of a $3$-manifold is a spine admitting a flow that is
transverse to the spine, where the flow in the complement of the spine
is diffeomorphic to a constant flow in an open ball.
We say that a contact structure on a closed, connected, oriented $3$-manifold is supported by a flow-spine if it has a contact form whose Reeb flow is a flow of the flow-spine. It is known by Thurston and Winkelnkemper that any open book decomposition of a closed oriented $3$-manifold supports a contact structure. 
In this paper, we introduce a notion of positivity for flow-spines and prove that any positive flow-spine of a closed, connected, oriented $3$-manifold supports a contact structure uniquely up to isotopy. The positivity condition is critical to the existence of the unique, supported contact structure, which is also proved in the paper.
\end{abstract}

\maketitle

%\setcounter{tocdepth}{1}
%\tableofcontents

\section{Introduction}

A {\it spine} is a $2$-dimensional polyhedron embedded in a closed $3$-manifold obtained from the manifold by removing an open ball and collapsing it from the boundary. 
For instance, the $2$-skeleton of the dual of a one-vertex triangulation is a spine.
Spines are related to the study of non-singular flows in $3$-manifolds as follows. 
For a given non-singular flow, set a disk transverse to the flow and intersecting all orbits, and then float the boundary of the disk smoothly until it arrives in the disk itself. The object obtained in this way is a spine equipped with a special structure, called a {\it flow-spine}. A flow-spine was introduced by the first author in~\cite{Ish86} and developed further 
in~\cite{Ish92, EI05}. 
This is very useful when one studies non-singular flow combinatorially since any non-singular flow has a flow-spine~\cite{Ish86}. 
A spine is described by a decorated trivalent graph on $S^2$ called the {\it DS-diagram},
that was introduced in a paper of Ikeda and Inoue~\cite{II85}. The DS is an abbreviation of Dehn-Seifert. 
A flow-spine is described by a DS-diagram with a specified simple loop called an {\it $E$-cycle}. Flow-spines are also studied by Benedetti and Petronio in~\cite{BP97} in the context of branched standard spines. 

A contact structure is a hyperplane distribution that is non-integrable everywhere.
In this paper, all 3-manifolds are oriented, and contact structures are always assumed to be
positive, that is, a contact form $\alpha$ satisfies $\alpha\land d\alpha>0$ with respect
to the orientation on the ambient 3-manifold.
Flexibility of contact structures allows us to study them combinatorially, especially in the $3$-dimensional case. Martinet and Lutz developed Dehn surgery technique for contact $3$-manifolds~\cite{Mar71, Lut77}, and Thurston and Winkelnkemper related contact structures to open book decompositions of $3$-manifolds~\cite{TW75}. 
Tightness and fillability were introduced and studied by
Bennequin~\cite{Ben83}, Eliashberg~\cite{Eli89, Eli90, Eli92}, Gromov~\cite{Gro85}
and many other mathematicians.
Great progress had been made by Giroux. 
There are two big directions in his works on $3$-dimensional contact topology: one is convex surface theory~\cite{Gir91} and the other is the so-called Giroux correspondence~\cite{Gir02}. 
Branched surfaces are sometimes used in the study of contact structures, see for instance~\cite{BP00, CGH09}.

The aim of this paper is to relate flow-spines of $3$-manifolds to contact structures by regarding Reeb flows as flows of flow-spines. 
This idea had appeared in the paper of Benedetti and Petronio~\cite{BP00}. They focused on the characteristic foliation on a branched standard spine embedded in a contact $3$-manifold and studied the contact structure using techniques in convex surface theory of Giroux. The point is that they did not use Reeb vector fields so much since convex surface theory uses rather contact vector fields.
In this paper, we focus on Reeb flows more and define the correspondence between flow-spines and contact structures as follows:

\begin{definition}
\begin{itemize}
\item[(1)] A flow $\mathcal F$ is said to be {\it carried by a flow-spine $P$} if $\mathcal F$ is a flow of $P$.
\item[(2)] A contact structure $\xi$ on $M$ is said to be {\it supported} by a flow-spine $P$
if there exists a contact form $\alpha$ on $M$ such that $\xi=\ker\alpha$ and 
its Reeb flow is carried by $P$.
\end{itemize}
\end{definition}

To our aim, we need to introduce a notion of positivity for flow-spines. Each region of a flow-spine is canonically oriented since it is transverse to the flow. This structure is called a {\it branching}. A branched simple polyhedron has two kinds of vertices: the vertex on the left in Figure~\ref{fig3} is said to be {\it of $\ell$-type} and the one on the right is {\it of $r$-type}. We define a flow-spine to be {\it positive} if it has at least one vertex and all vertices are of $\ell$-type%
\footnote{
The positivity for flow-spines introduced in this paper is different from the one introduced in~\cite{Ish86}.}.
An essential reason for concerning this condition is that we cannot expect one of the existence and the uniqueness of a contact manifold for a given flow-spine without this condition, see Theorem~\ref{thm00}. Another reason, which is more technical, is that a $1$-form called a {\it reference $1$-form}, which plays a key role in the proof of Theorem~\ref{thm01}, 
cannot be defined unless we restrict flow-spines to positive ones. 

\begin{figure}[htbp]
\begin{center}
\includegraphics[width=7.5cm, bb=146 625 446 712]{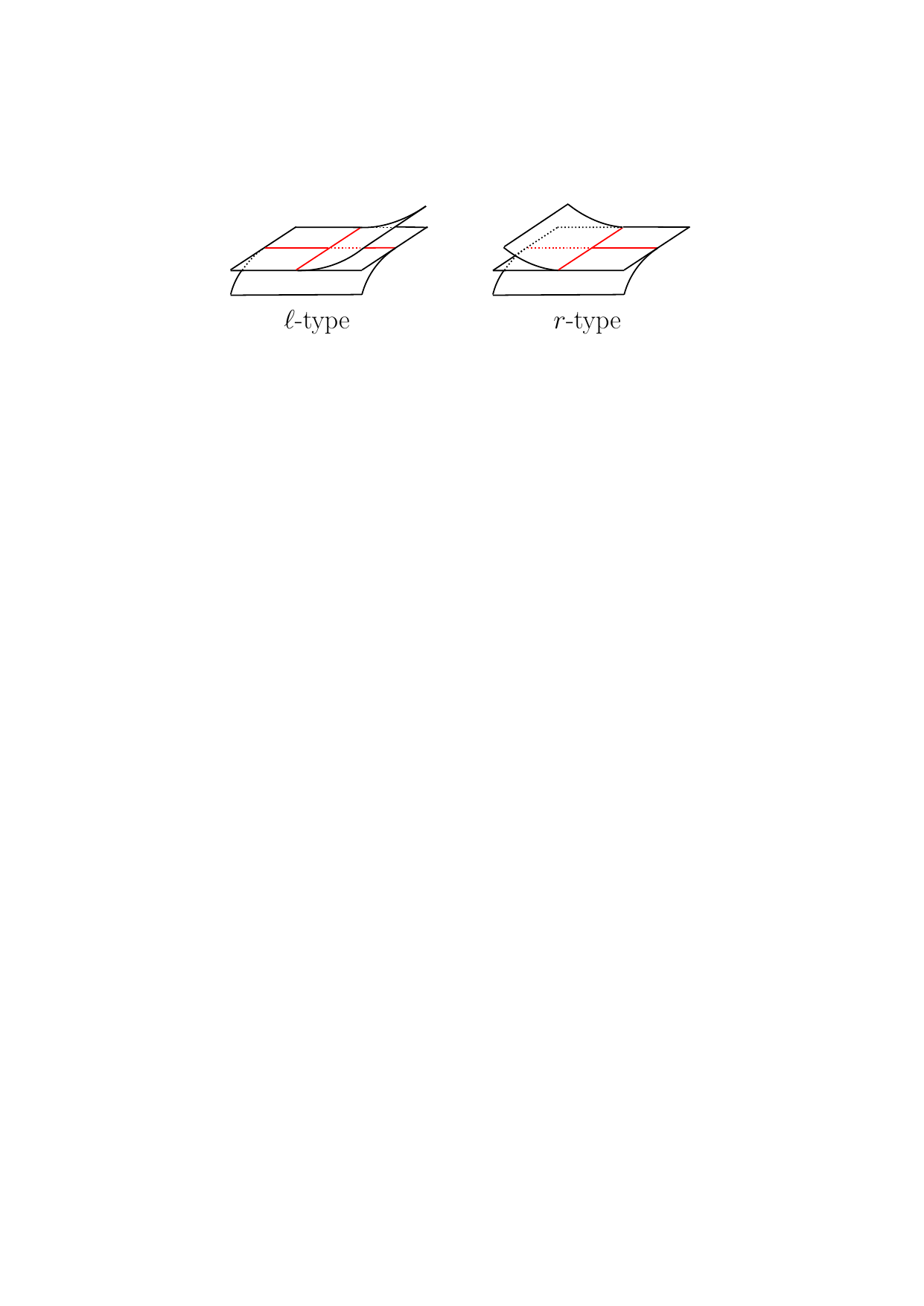}
\caption{Vertices of $\ell$-type and $r$-type. Here, the ambient space is equipped with the right-handed orientation.}\label{fig3}
\end{center}
\end{figure}

With these observations, we restrict our attention to positive flow-spines.
The main theorem of this paper is the following.

%%\begin{theorem}\label{thm1}
%%Let $P$ be a positive flow-spine of a closed, connected, oriented, smooth $3$-manifold $M$. Then there exists a contact form on $M$ whose Reeb flow is carried by $P$.
%%\end{theorem}

\begin{theorem}\label{thm01}
For any positive flow-spine $P$ of a closed, oriented $3$-manifold $M$, 
there exists a unique contact structure supported by $P$ up to isotopy.
\end{theorem}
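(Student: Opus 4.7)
My plan is to split the statement into the existence and uniqueness assertions, treating each by a different technique, both hinging on the positivity hypothesis.

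For the existence half, the strategy is to build a global contact form on $M$ by first producing a \emph{reference $1$-form} $\beta$ on a regular neighborhood $N(P)$ of $P$ that is positive on every vector consistent with the branching (in particular, on the flow $\mathcal F$ carried by $P$), and then upgrading $\beta$ to a contact form $\alpha$. The branching orients each region of $P$, and on each region and along each edge the positive $1$-form $\beta$ is determined up to multiplication by a positive function. The decisive local check is at a vertex: writing coordinates around a vertex $v$ adapted to the branching, one verifies directly that the condition $\beta \wedge d\beta > 0$ is compatible with an $\ell$-type vertex but obstructed at an $r$-type vertex; this is precisely the mechanism by which positivity enters. Hence the hypothesis permits a consistent choice of $\beta$ throughout $N(P)$. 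Since $M\setminus P$ is an open $3$-ball on which the flow has a canonical model near the $E$-cycle, I would extend $\beta$ across this ball by a standard Darboux-type extension (together with a multiplicative adjustment by a positive function $f$) to guarantee the contact condition globally, yielding a contact form $\alpha$ whose Reeb flow is carried by $P$.

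For the uniqueness half, suppose $\alpha_0$ and $\alpha_1$ are two contact forms on $M$ whose Reeb flows are carried by $P$. Their Reeb vector fields $R_0$ and $R_1$ are both positively transverse to every region of $P$ in the direction prescribed by the branching. The plan is to produce a smooth family $\{\alpha_t\}_{t \in [0,1]}$ of contact forms interpolating $\alpha_0$ and $\alpha_1$ whose Reeb flows are all carried by $P$, and to apply Gray's stability theorem to conclude that the contact structures $\xi_t = \ker\alpha_t$ are pairwise isotopic. To construct the path I would exploit the reference $1$-form machinery from the existence half: write each $\alpha_i$ as $f_i\beta + \gamma_i$ for a common $\beta$, positive $f_i$, and correction $1$-form $\gamma_i$ compatible with the branching, then interpolate $(f_t,\gamma_t)$ linearly. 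Openness of the contact condition on $M\setminus N(P)$, combined with the pointwise positivity on $P$ guaranteed by the branching, should ensure that every $\alpha_t$ remains contact and has Reeb flow carried by $P$.

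The main obstacle I expect is the existence half, specifically the global assembly of the reference $1$-form and the subsequent upgrade to a contact form. The local model at an $\ell$-type vertex must mesh consistently with the chosen models along the incident edges and regions, and the multiplicative adjustment producing $\alpha$ from $\beta$ must be compatible with the combinatorics of the $E$-cycle on $\partial(M\setminus P)$; this is where the bookkeeping will be delicate. Uniqueness, by contrast, should reduce cleanly to Gray's theorem once a suitable path of contact forms has been produced from the reference $1$-form construction.
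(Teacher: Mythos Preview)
Your proposal has genuine gaps in both halves, and the mechanism you identify for how positivity enters is not the one that actually works.

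For existence, you try to build a single reference $1$-form $\beta$ on $N(P)$ that is simultaneously positive on the flow direction and satisfies $\beta\wedge d\beta>0$. These two requirements pull in different directions and cannot be met by one form. The paper instead uses \emph{two} forms: a reference $1$-form $\eta$ on all of $M$ encoding the flow direction, which is only a \emph{confoliation} ($\eta\wedge d\eta\geq 0$, not $>0$), and a separate $1$-form $\beta$ on $P$ with $d\beta>0$. The contact form is then $\alpha=\hat\beta+R\eta$ for large $R$, in the Thurston--Winkelnkemper style. Positivity enters in two distinct places, neither of which is your vertex check: (i) it implies an \emph{admissibility} condition (a system of linear inequalities over the edges of $S(P)$) needed to produce $\beta$ with $d\beta>0$ on every region---this is a nontrivial combinatorial argument and is completely absent from your outline; and (ii) it is exactly what makes $\eta\wedge d\eta\geq 0$ hold at $\ell$-type vertices and fail at $r$-type vertices. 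Your claim that ``$\beta\wedge d\beta>0$ is compatible with an $\ell$-type vertex'' misidentifies the relevant inequality. The extension across the ball $M\setminus P$ is also not a Darboux extension; it requires careful interpolation of $\eta$ and $\hat\beta$ between the top and bottom faces via the DS-diagram structure, and the estimate controlling the cross terms is where most of the work lies.

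For uniqueness, your plan to interpolate $(f_t,\gamma_t)$ linearly and invoke openness of the contact condition does not work: a convex combination of two contact forms is not contact in general, and openness says nothing about connecting two given points. The paper's fix is again to add $R\eta$: one shows that any contact form $\alpha$ whose Reeb flow is carried by $P$ can be arranged (after adjusting coordinates near the vertices) to satisfy $\alpha\wedge d\eta+d\alpha\wedge\eta>0$, and then $\alpha+R\eta$ is contact for all $R\geq 0$ while the linear interpolation $(1-s)\alpha_0+s\alpha_1+R\eta$ is contact for large $R$. This is precisely the structure your outline lacks, and it again relies on the confoliation inequality $\eta\wedge d\eta\geq 0$ coming from positivity.
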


Recall that two contact structures $\xi_0$ and $\xi_1$ are said to be {\it isotopic} if there exists a one-parameter family of contact forms connecting two contact forms $\alpha_0$ and $\alpha_1$ whose kernel are $\xi_0$ and $\xi_1$, respectively.
In particular, $(M,\xi_0)$ and $(M,\xi_1)$ are contactomorphic by Gray's stability~\cite{Gra59}.
The existence of a poistive flow-spine for a given contact $3$-manifold will be discussed in a forthcoming paper.

To prove Theorem~\ref{thm01}, 
we first give a reference $1$-form $\eta$ on the $3$-manifold $M$ with respect to the flow-spine $P$ explicitly and then consider a contact form on $M$ of the form $\hat\beta+R\eta$, where $\hat\beta$ is a $1$-form defined by extending a $1$-form $\beta$ on $P$ with $d\beta>0$ to the complement $M\setminus P$ and $R$ is a sufficiently large positive real number. This is analogous to the contact form $(1-t)\beta+t\phi^*\beta+Rdt$ for open books used by Thurston and Winkelnkemper and then in the Giroux correspondence, where $\phi$ is the monodromy diffeomorphism and $t\in S^1$ is the parameter of pages of the open book.  The discussion in the open book case is easier because $dt$ is closed. In the case of flow-spine, although $\eta$ is not closed, it satisfies $\eta\land d\eta\geq 0$, that is, it gives a confoliation~\cite{ET98}. We use the positivity of flow-spines to get this property. Then Theorem~\ref{thm01} follows by the same strategy as the Giroux correspondence though the argument is much more complicated. 

This paper is organized as follows: 
In Section~2, we shortly recall terminologies in $3$-dimensional contact topology
that will be used in this paper. In Section~3, we introduce flow-spines and DS-diagrams.
New observation starts from Section~4, where we introduce the admissibility condition and the notion of positivity for flow-spines. In Section~5, we introduce the reference $1$-form $\eta$ for a positive flow-spine that plays an important role in the proof of the main theorem.
%%%The proof of the main theorem is given in Section~6.
The proof of the existence of the supported contact structure in Theorem~\ref{thm01}
is given in Section~6 and the proof of its uniqueness is given in Section~7.

The authors wish to express their gratitude to Riccardo Benedetti for many insightful comments.
The second author would like to thank Shin Handa and Atsushi Ichikawa for useful discussions in the early stages of research.
The second author is supported by JSPS KAKENHI Grant Numbers JP19K03499,
JP17H06128 and Keio University Academic Development Funds for Individual Research.
The third author is supported by JSPS KAKENHI Grant Numbers
JP15H03620, JP17K05254, JP17H06463, and JST CREST Grant Number JPMJCR17J4.
The fourth author is supported by JSPS KAKENHI Grant Number JP19K21019.

\section{Contact $3$-manifolds}\label{sec1}

Throughout this paper, for a polyhedral space $X$, $\Int X$ represents the interior of $X$,
$\partial X$ represents the boundary of $X$, and $\Nbd(Y;X)$ represents a closed regular neighborhood of a subspace $Y$ of $X$ in $X$,
where $X$ is equipped with the natural PL structure if $X$ is a smooth manifold.
The set $\Int\Nbd(Y;X)$ is the interior of $\Nbd(Y;X)$ in $X$.

In this section, we briefly recall notions and known results in $3$-dimensional contact topology
that will be used in this paper.
The reader may find general explanation, for instance, in~\cite{Etn06, Gei08, OS04}.

Let $M$ be a closed, oriented, smooth $3$-manifold. A {\it contact structure} on $M$ is the $2$-plane field on $M$ given by the kernel of a $1$-form $\alpha$ on $M$ satisfying $\alpha\land d\alpha\ne 0$ everywhere. The $1$-form $\alpha$ is called a {\it contact form}.
If $\alpha\land d\alpha>0$ everywhere on $M$ then the contact structure given by $\ker\alpha$ is called a {\it positive contact structure}
and the $1$-form $\alpha$ is called a {\it positive contact form}.
The pair of a closed, oriented, smooth $3$-manifold $M$ and a contact structure $\xi=\ker\alpha$ on $M$ is called a {\it contact $3$-manifold}
and denoted by $(M,\xi)$. In this paper, 
by a contact structure we mean a positive one.

There are two ways of classification of contact $3$-manifolds: up to isotopy and up to contactomorphism. Two contact structures $\xi$ and $\xi'$ on $M$ are said to be {\it isotopic} if there exists a one-parameter family of contact forms $\alpha_t$, $t\in [0,1]$, such that $
\xi=\ker\alpha_0$ and $\xi'=\ker\alpha_1$. Two contact $3$-manifolds $(M,\xi)$ and $(M',\xi')$
are said to be {\it contactomorphic} if there exists a diffeomorphism $\phi:M\to M'$ such that $\phi_*(\xi)=\xi'$. The map $\phi$ is called a {\it contactomorphism}. 
If $M=M'$ then we also say that $\xi$ and $\xi'$ are contactomorphic. The Gray theorem states that if two contact structures are isotopic then they are contactomorphic~\cite{Gra59}.

The contact structure $\xi_{\rm std}$ on $S^3$ given as the kernel of the $1$-form $\alpha_{\rm std}=\sum_{j=1}^2 (x_jdy_j-y_jdx_j)|_{S^3}$ is called the {\it standard contact structure on $S^3$}, where $(x_1+\sqrt{-1}y_1, x_2+\sqrt{-1}y_2)$ are the standard coordinates of $\Complex^2$ and $S^3$ is the unit sphere in $\Complex^2$. We also say that $\xi_{\rm std}$ is the $2$-plane field given by the complex tangency at each point of $S^3$ in $\Complex^2$. This contact structure satisfies the following important property, called ``tightness'', which was shown by Bennequin~\cite{Ben83}. A contact structure $\xi$ on $M$ is said to be {\it tight} if there does not exist a disk $D$ embedded in $M$ such that $\partial D$ is everywhere tangent to $\xi$ and the framing of $D$ along $\partial D$ coincides with that of $\xi$. Otherwise $\xi$ is said to be {\it overtwisted} and the disk $D$ is called an {\it overtwisted disk}. Note that the tightness is an invariant of contact $3$-manifolds up to contactomorphism. 

Next we introduce the Reeb vector field. Let $\alpha$ be a contact form on $M$.
A vector field $X$ on $M$ determined by the conditions $d\alpha(X,\cdot)=0$ and $\alpha(X)=1$ is called the {\it Reeb vector field} of $\alpha$ on $M$.
Such a vector field is uniquely determined by $\alpha$ and we denote it by $R_\alpha$.
The non-singular flow on a $3$-manifold $M$ generated by a Reeb vector field is called a {\it Reeb flow}.
The Reeb vector field plays important roles in many studies in contact geometry and topology.
In $3$-dimensional contact topology, it is used to give a correspondence between contact structures and open book decompositions of $3$-manifolds.
It is proved by Thurston and Winkelnkemper that for any open book there exists a supported contact structure, that is, the kernel of a contact form whose Reeb flow is transverse to the pages and tangent to the binding~\cite{TW75}. As a corollary, it follows that any closed, oriented, smooth $3$-manifold admits a contact structure. 
Moreover, it is proved by Giroux that such a contact structure is unique up to isotopy. This is called the {\it supported contact structure} and is used 
%%The idea of supported contact structures was used by Giroux
when he introduced the correspondence between contact structures and open book decompositions of $3$-manifolds~\cite{Gir02}. Note that the existence of a contact structure for any $3$-manifold was first proved by Martinet~\cite{Mar71}. Our main theorem, Theorem~\ref{thm01}, can be seen as a flow-spine version of 
these results. 
%%the result of Thurston and Winkelnkemper.

\section{Flow-spines and DS-diagrams}

\subsection{Branched polyhedron}

A compact topological space $P$ is called a {\it simple polyhedron}, or a 
{\it quasi-standard polyhedron}, 
if every point of $P$ has a regular neighborhood homeomorphic to 
one of the three local models shown in Figure~\ref{fig1}. 
A point whose regular neighborhood is shaped on the model 
(iii) is called a {\it true vertex} of $P$ (or {\it vertex} for short), and we denote the set of true vertices of $P$  by $V(P)$. 
The set of points whose regular neighborhoods are shaped on the models (ii) and (iii) is called 
the {\it singular set} of $P$, and we denote it by $S(P)$. 
Each connected component of $P\setminus S(P)$ is called a {\it region} of $P$ and each connected component of $S(P)\setminus V(P)$ is called an {\it edge} of $P$.
A simple polyhedron $P$ is said to be {\it special}, or {\it standard},
if each region of $P$ is an open disk and each edge of $P$ is an open arc.
Throughout this paper, we assume that all regions 
%components of $P\setminus S(P)$ 
are orientable.

\begin{figure}[htbp]
\begin{center}
\includegraphics[width=10.0cm, bb=129 621 543 712]{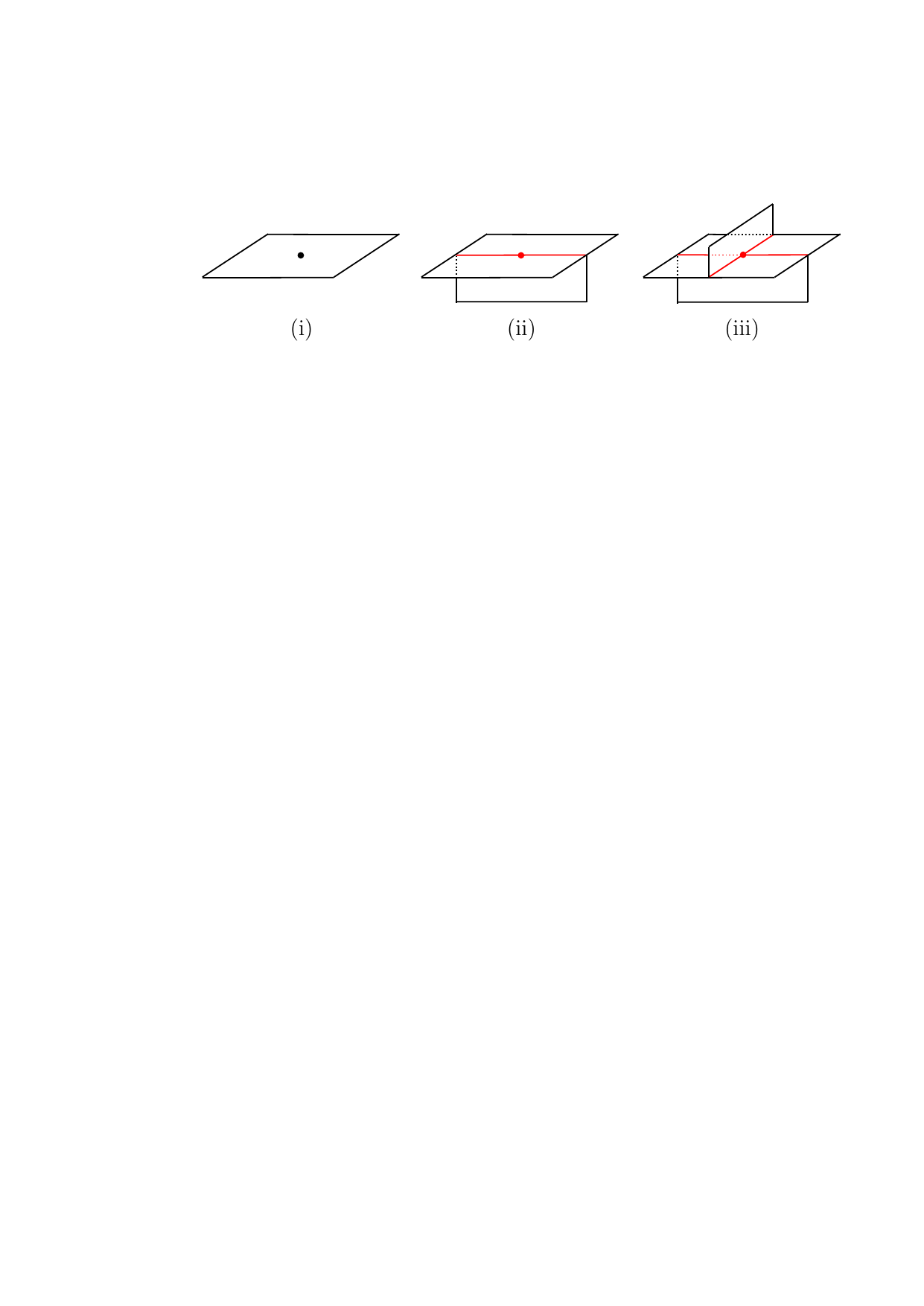}
\caption{The local models of a simple polyhedron.}\label{fig1}
\end{center}
\end{figure}

A {\it branching} of a simple polyhedron $P$ is an assignment of orientations to
regions of $P$ such that the three orientations on each edge of $P$ induced by the three adjacent regions do not agree.
We note that even though each region of a simple polyhedron $P$ is orientable, 
$P$ does not necessarily admit a branching. 
%See Ishii \cite{Ish86}, Benedetti-Petronio \cite{BP97} and Petronio \cite{Pet12} for general properties of branched polyhedra. 
See~\cite{Ish86, BP97, Kod07, Pet12} for general properties of branched polyhedra.

\subsection{Flow-spines and DS-diagrams}\label{sec22}

A polyhedron $P$ is called a {\it spine} of a closed, connected, oriented $3$-manifold $M$ 
if it is embedded in $M$ and $M$ with removing an open ball collapses onto $P$.
If a spine is simple then it is called a {\it simple spine}.

If a spine $P$ admits a branching, then it allows us to smoothen $P$ in the ambient manifold $M$ as in the local models shown in Figure~\ref{fig2}. 
A point of $P$ whose regular neighborhood is shaped on the model~(iii) is called a {\it vertex of $\ell$-type} and that on the model~(iv) is a {\it vertex of $r$-type}.

\begin{figure}[htbp]
\begin{center}
\includegraphics[width=14.0cm, bb=129 649 587 712]{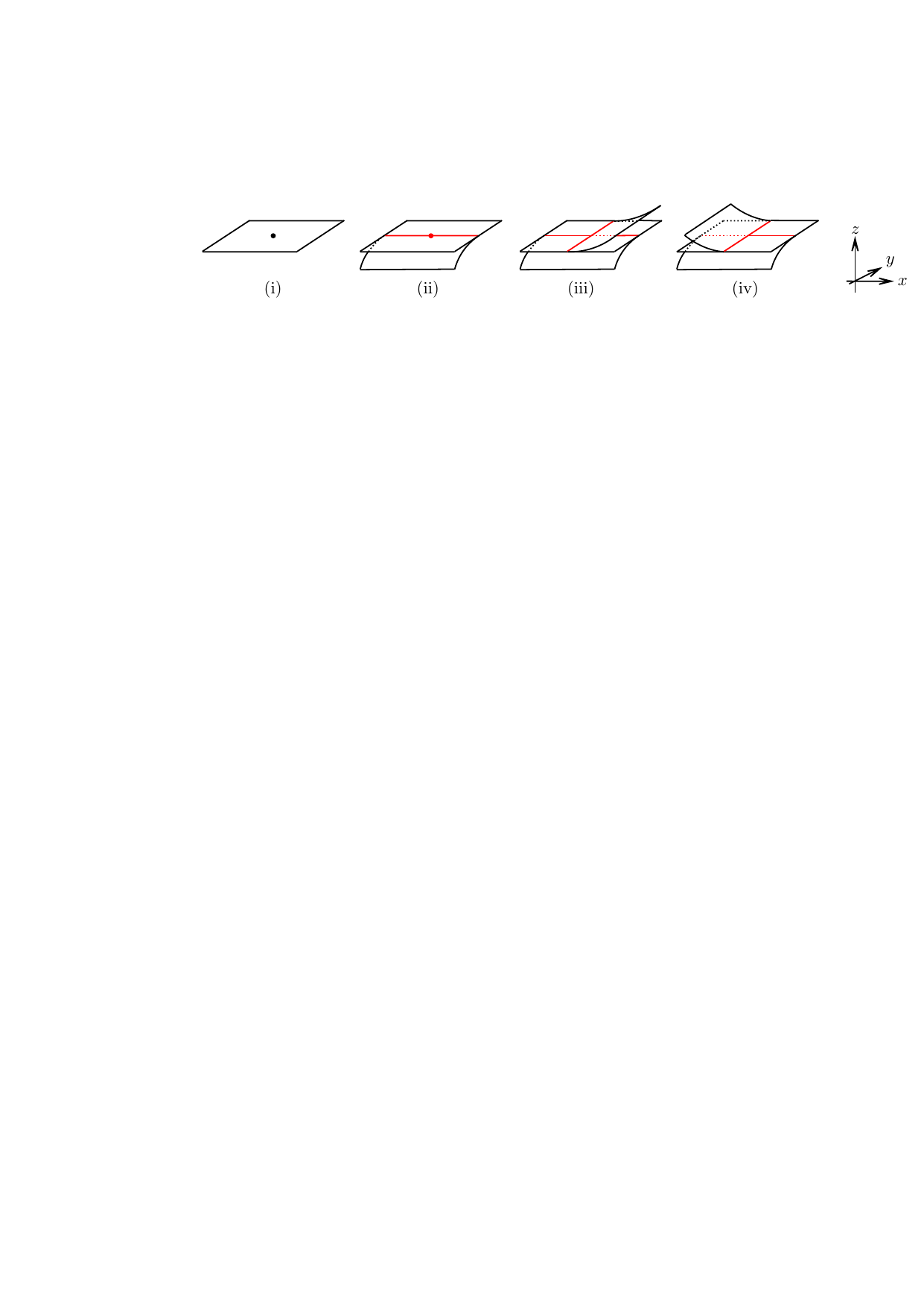}
\caption{The local models of a branched simple polyhedron.}\label{fig2}
\end{center}
\end{figure}

\begin{definition}
Let $M$ be a closed, connected, oriented $3$-manifold.
\begin{itemize}
\item[(1)] Let $(M,\mathcal F)$ be a pair of $M$ and a non-singular flow $\mathcal F$ on $M$. A simple spine $P$ of $M$ is called a {\it flow-spine} of $(M,\mathcal F)$ if, for each point $p\in P$, there exists a positive coordinate chart $(U; x,y,z)$ of $M$ around $p$ such that $(U,p)$ is one of the models in Figure~\ref{fig2}, where $\mathcal F|_U$ is generated by $\frac{\partial}{\partial z}$, and $\mathcal F|_{M\setminus P}$ is a constant vertical flow shown in Figure~\ref{fig2-2}.
\item[(2)] A branched simple spine $P$ of $M$ is called a {\it flow-spine} of $M$ if it is a flow-spine of $(M,\mathcal F)$ for some non-singular flow $\mathcal F$ on $M$.
The flow $\mathcal F$ is said to be {\it carried by $P$}.
\end{itemize}
\end{definition}

%By~\cite[Proposition~4.2.2]{BP97}, the flow $\mathcal F$ in the open ball $M\setminus P$ is a constant vertical flow shown in Figure~\ref{fig2-2}.

\begin{figure}[htbp]
\begin{center}
\includegraphics[width=4.5cm, bb=177 609 353 713]{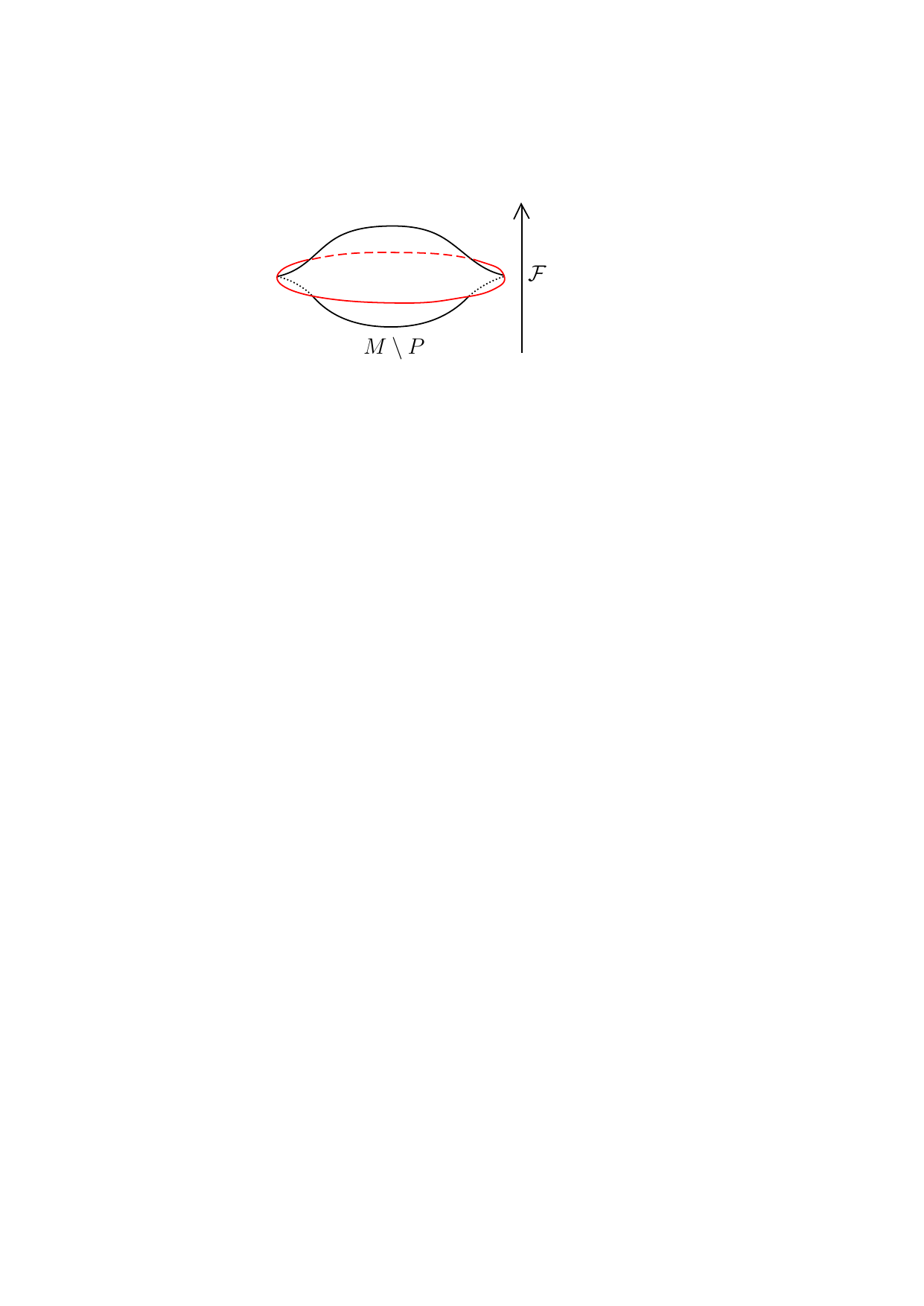}
\caption{The flow in the complement $M\setminus P$.}\label{fig2-2}
\end{center}
\end{figure}

Let $B^3$ be the unit ball in $\Real^3$ equipped with the right-handed orientation. 
Consider a homeomorphism $\imath$ from $M\setminus P$  to the interior of $B^3$ that takes the biggest horizontal open disk in Figure~\ref{fig2-2} to the horizontal open disk bounded by the equator $E$ of $B^3$. Now we take the geometric completions of these open balls. 
Then the inverse map $\imath^{-1}$ induces a continuous map $\jmath$ from $B^3$ to $M$
that maps the boundary $S^2$ of $B^3$ to the flow-spine $P$.
The preimage of the singular set $S(P)$ of $P$ by the map $\jmath$ constitutes a trivalent graph $G$ on $S^2$ containing the equator $E$. The map $\jmath$ restricted to $S^2$, denoted by $\jmath'$, is called the {\it identification map}.

Let $R_1,R_2,\ldots,R_n$ be the regions of $P$ that are oriented so that the flow $\mathcal F$ is positively transverse to these regions. Let $S^+$ and $S^-$ denote the upper and lower hemispheres of $S^2$, respectively. The identification map $\jmath'$ satisfies the following properties:
\begin{itemize}
\item The preimage of each region $R_i$ by $\jmath'$ consists of two connected regions $R_i^+$ and $R_i^-$ bounded by $G$ each of which is homeomorphic to $R_i$ and which are contained in $S^+$ and $S^-$, respectively.
%such that $R_i^+\subset S^+$ and  $R_i^-\subset S^-$.
\item The orientation of $R_i^+$ (respectively, $R_i^-$) induced from that of $B^3$ coincides with (respectively, is opposite to) the orientation of $R_i$ through the map $\jmath'$.
\end{itemize}
The $3$-manifold $M$ is restored from $B^3$ by identifying the pairs of regions $R_i^+$ and $R_i^-$ for $i=1,\ldots,n$.

\begin{definition}
The trivalent graph $G$ on $S^2$ equipped with the identification map $\jmath'$ obtained from a flow-spine $P$ of $M$ as above is called the {\it DS-diagram} of $P$. The equator $E$ in $G$ is called the {\it $E$-cycle} of $G$. The $E$-cycle is oriented as the boundary of $S^+$ with the orientation induced from that of $B^3$.
\end{definition}

\begin{remark}
\begin{itemize}
\item[(1)] To be precise, the DS-diagram defined above is a {\rm DS-diagram with an $E$-cycle}.
If a simple spine is given then we may obtain its DS-diagram without $E$-cycles in the same manner.
There is a formal definition of $E$-cycles for DS-diagrams, see~\cite{Ish92}.
Note that a DS-diagram of a simple spine may have several possible positions of $E$-cycles.
In this paper, by a DS-diagram we mean a DS-diagram with a fixed $E$-cycle.
\item[(2)] Conversely, if a DS-diagram $G$ with an $E$-cycle is given, we may obtain a closed $3$-manifold $M$ from the diagram by using the identification map $\jmath'$. The image $\jmath'(S^2)$ in $\jmath(B^3)=M$ is a flow-spine of $M$.
\end{itemize}
\end{remark}

For convenience, taking the stereographic projection $\pi$ of $S^2$ from the south pole,
we describe the DS-diagram $G$ on the real plane $\Real^2$ so that $\pi(E)$ is the unit circle, $\pi(S^+)$ is the inside of $\pi(E)$ and $\pi(S^-)$ is the outside. The real plane $\Real^2$ is oriented such that it coincides with that of $S^2$ as the boundary of $B^3$.
The image $\pi(R_i^+)$ lies on $\pi(S^+)$ (respectively, $\pi(R_i^-)$ lies on $\pi(S^-)$)
and its orientation induced from that of $R_i$ coincides with (respectively, is opposite to) the orientation of $\Real^2$.
For simplicity, we denote $\pi(E)$, $\pi(R_i^+)$ and $\pi(R_i^-)$
by  $E$, $R_i^+$ and $R_i^-$, respectively. 

\begin{example}\label{ex0}
Consider the diagram described on the left in Figure~\ref{fig3-1}.
The $3$-manifold $M$ is obtained from $B^3$ by identifying $R_i^+$ and $R_i^-$ for $i=1,2$ so that the labeled edges along their boundary coincide.
Let $P$ be the simple spine obtained as the image of $S^2$ by the identification map $\jmath'$.
The edges with label $e_i$ are the preimage of the edge $e_i$ of $P$ by $\jmath'$
and the vertices with label $v$ are the preimage of the vertex $v$ of $P$. 
The right-top is the union of $\Nbd(S(P);P)$ and the region $R_2$, which is obtained from the union of $R_2^+$, $R_2^-$ and neighborhoods of the edges and the vertices by identifying them by $\jmath'$. The polyhedron $P$ is obtained from this branched polyhedron by attaching the region $R_1$ along the boundary.
Remark that the branched polyhedron on the right-top in Figure~\ref{fig3-1} is abstract, not an object embedded in $\Real^3$.
The polyhedron $P$ embedded in $\Real^3\subset S^3$ is described on the right-bottom. 
This polyhedron can be obtained from the $3$-ball by collapsing from the boundary. Hence it is a spine of $S^3$. 
Furthermore, setting a flow in $\Nbd(P;S^3)$ positively transverse to $P$ and extending it to $S^3\setminus P$ canonically as in Figure~\ref{fig2-2}, we may see that $P$ is a flow-spine of $S^3$.
This flow-spine is called the {\it positive abalone}.
The flow-spine obtained from the mirror of the DS-diagram on the left in Figure~\ref{fig3-1} is called the {\it negative abalone}. The abalone in Figure~\ref{fig3-1} is named ``positive'' 
%%since it supports the standard contact structure on $S^3$, which will be explained in~\cite{IIKN4}.
since it supports the standard contact structure on $S^3$, which will be proved in a forthcoming paper.
The underlying abstract simple polyhedra of the positive and negative abalones are the same. This polyhedron first appeared in~\cite{Ike71}. See also~\cite{Ike72}.
\end{example}

\begin{figure}[htbp]
\begin{center}
\includegraphics[width=12cm, bb=130 417 580 713]{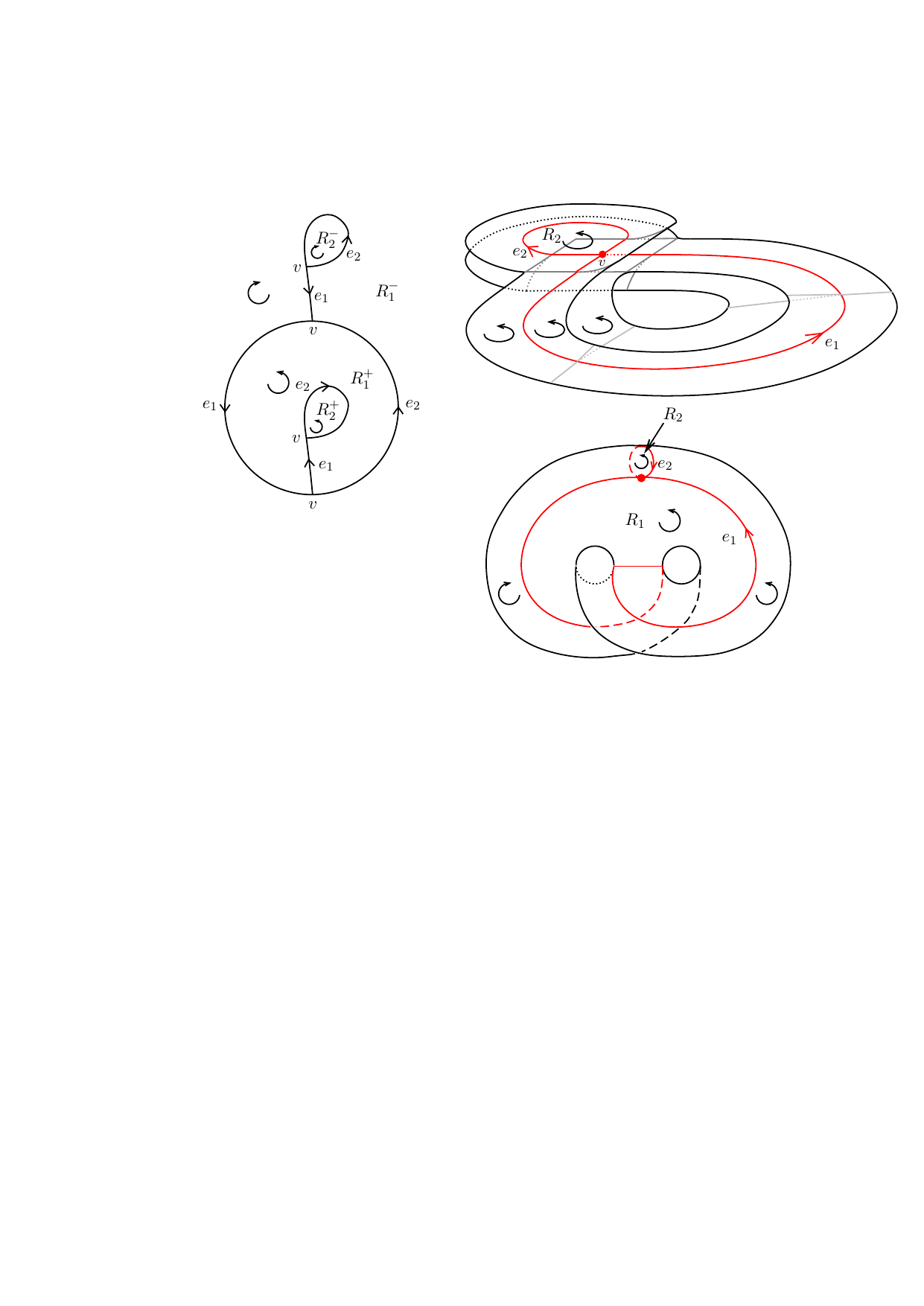}
\caption{The positive abalone. The left diagram is its DS-diagram, the right-top is the union of $\Nbd(S(P);P)$ and the region $R_2$ with branching structure, and the right-bottom is the positive abalone embedded in $S^3$.}\label{fig3-1}
\end{center}
\end{figure}

The following theorem is basic and important.

\begin{theorem}[\cite{Ish86}]\label{Ishii_FS}
Any pair $(M,\mathcal F)$ of a closed, connected, oriented, smooth $3$-manifold $M$ and a non-singular flow $\mathcal F$ on $M$ admits a flow-spine.
\end{theorem}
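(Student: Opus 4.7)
The plan is to show that any $(M, \mathcal F)$ as in the theorem admits a normal pair $(\mathcal F, \Sigma)$, since the paragraphs preceding the theorem already produce the flow-spine $P_+(\mathcal F, \Sigma)$ from such a pair. So the entire task reduces to producing a compact $2$-disk $\Sigma$ transverse to $\mathcal F$ satisfying conditions (i)--(iv) of a normal pair.

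First I would cover $M$ by finitely many flow-boxes. Around any point $p \in M$ the non-singular flow admits a chart of the form $D^2 \times (-\epsilon, \epsilon)$ in which $\mathcal F$ is generated by $\partial/\partial t$, and the central slice $D^2 \times \{0\}$ is a compact local section. By compactness of $M$, finitely many such flow-boxes $U_1, \ldots, U_N$ cover $M$; after shrinking if necessary, the central slices $\Sigma_1, \ldots, \Sigma_N$ form a pairwise disjoint family of compact local sections whose union meets every orbit of $\mathcal F$. This yields a disjoint collection of transverse disks satisfying condition (ii) but not (i).

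To promote this family to a single disk, I would merge the $\Sigma_i$ inductively. For each consecutive pair $(\Sigma_i, \Sigma_{i+1})$, choose an embedded arc $\gamma_i \subset M$ from $\partial \Sigma_i$ to $\partial \Sigma_{i+1}$ transverse to $\mathcal F$, thicken $\gamma_i$ into a narrow band in a direction transverse to both $\dot \gamma_i$ and $\mathcal F$, and attach the band to $\Sigma_i \cup \Sigma_{i+1}$, smoothing at the attaching arcs. Generic choices keep the bands embedded, pairwise disjoint, transverse to $\mathcal F$, and free of handles, so the outcome is an embedded disk $\Sigma$ transverse to $\mathcal F$ that contains $\bigsqcup_i \Sigma_i$ up to isotopy, hence still meets every orbit. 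Conditions (iii) and (iv) are then arranged by a further generic perturbation of $\partial \Sigma$: the forward first-return map $\hat T_+$ is smooth on the open subset of $\partial \Sigma$ where it is defined, so a $C^1$-small transverse perturbation of $\partial \Sigma$ inside a thickened transverse surface containing $\Sigma$ makes $\partial \Sigma$ and $\mathcal F(\partial \Sigma; T_+(\cdot))$ meet transversely at finitely many points $x$, after which an additional small perturbation pushes each $\hat T_+(\hat T_+(x))$ into $\Int \Sigma$. The resulting $(\mathcal F, \Sigma)$ is a normal pair, and the construction of the preceding subsection produces $P_+(\mathcal F, \Sigma)$ as the desired flow-spine.

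The main obstacle is the merging step: organizing the choices of arcs $\gamma_i$ and of thickening directions so that the final surface is genuinely an embedded $2$-disk transverse to $\mathcal F$ and still meeting every orbit is a delicate global general-position argument, and is the technical heart of the construction in~\cite{Ish86}.
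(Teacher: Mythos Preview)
Your proposal is correct and follows essentially the same route as the paper: cover $M$ by finitely many flow-boxes, take their disjoint transverse disks, connect them by bands transverse to $\mathcal F$ to obtain a single disk $\Sigma$, and then invoke the normal-pair construction of~\cite{Ish86} for the flow-spine. The paper's sketch is terser (it simply cites~\cite[Theorem~1.1]{Ish86} for the band argument and~\cite[Theorem~1.2]{Ish86} for producing the spine), but the strategy is identical.
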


\subsection{Regular moves}

In this subsection, we introduce two kinds of moves of branched simple polyhedra.

\begin{definition}
\begin{itemize}
\item[(1)] The moves shown in Figure~\ref{fig3-7} and their mirrors are called  {\it first regular moves}~\cite{Ish92}
or {\it Matveev-Piergallini moves}~\cite{BP97}.
\item[(2)] The moves shown in Figure~\ref{fig3-8} and their mirrors are called {\it second regular moves}~\cite{Ish92} or {\it lune moves}~\cite{Mat03}.
\end{itemize}
\end{definition}

\begin{figure}[htbp]
\begin{center}
\includegraphics[width=9cm, bb=129 526 536 713]{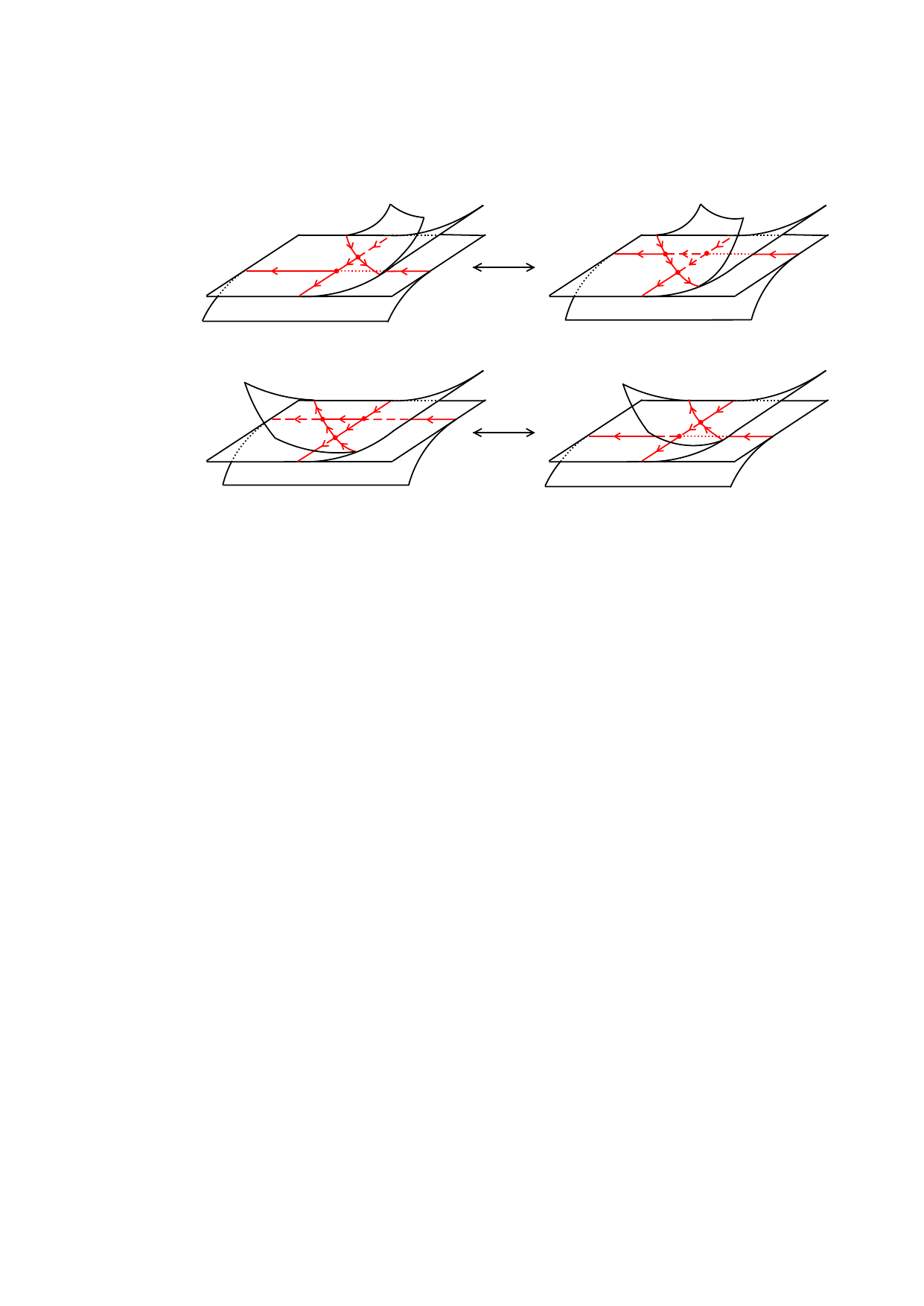}
\caption{First regular moves.}\label{fig3-7}
\end{center}
\end{figure}

\begin{figure}[htbp]
\begin{center}
\includegraphics[width=11cm, bb=129 605 475 712]{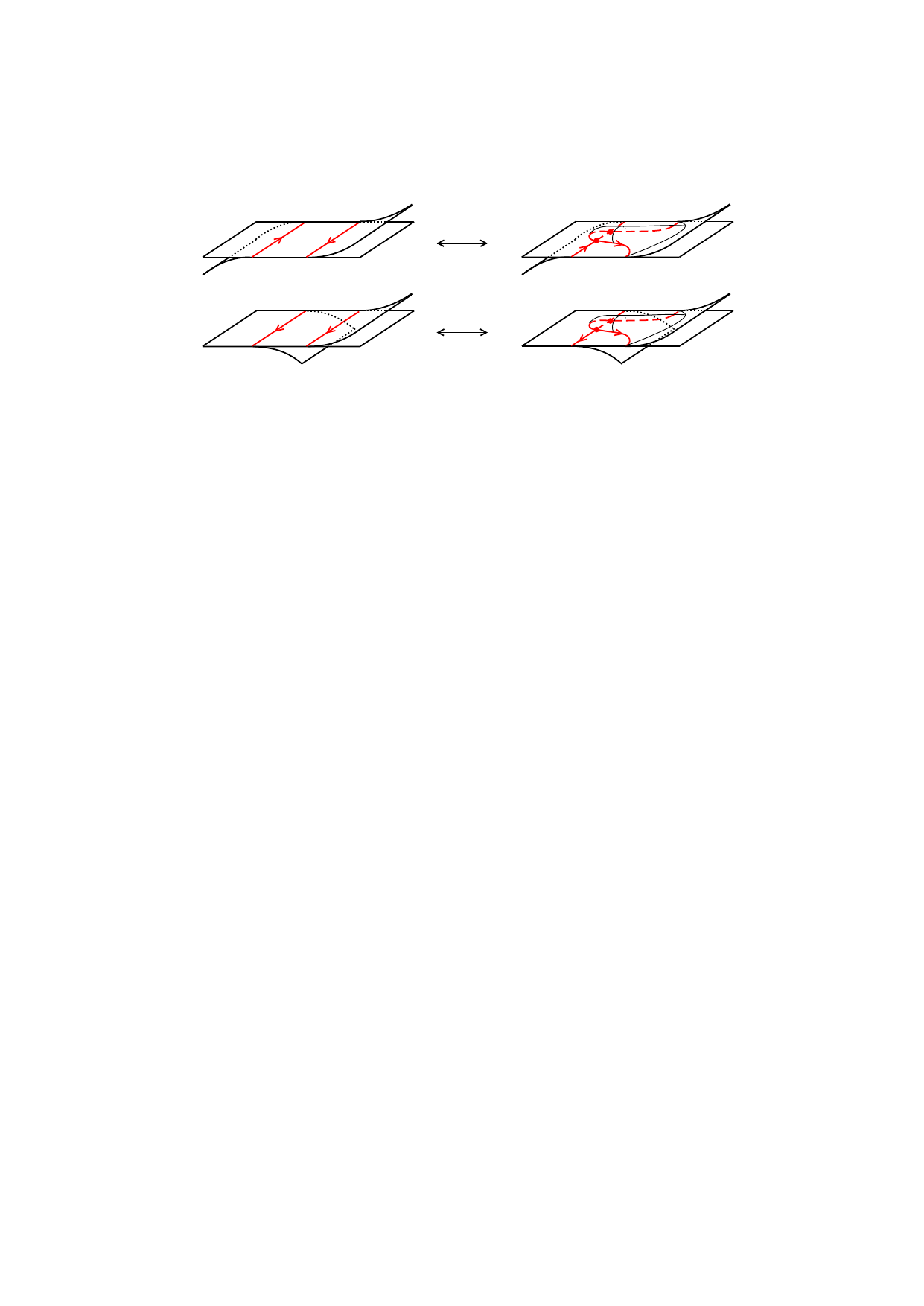}
\caption{Second regular moves.}\label{fig3-8}
\end{center}
\end{figure}

These moves are enough to trace deformation of non-singular flows in $3$-manifolds.
Two non-singular flows $\mathcal F_1$ and $\mathcal F_2$ in a closed, orientable $3$-manifold are said to be {\it homotopic} 
if there exists a smooth deformation from $\mathcal F_1$ to $\mathcal F_2$ in the set of non-singular flows.

\begin{theorem}[\cite{Ish92}]\label{thm_ishii}
Let $P_1$ and $P_2$ be flow-spines of a closed, orientable $3$-manifold $M$.
Let $\mathcal F_1$ and $\mathcal F_2$ be flows on $M$ carried by $P_1$ and $P_2$, respectively. 
Suppose that $\mathcal F_1$ and $\mathcal F_2$ are homotopic.
Then $P_1$ is obtained from $P_2$ by applying first and second regular moves successively.
\end{theorem}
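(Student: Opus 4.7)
\smallskip

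\noindent\textbf{Proof proposal for Theorem~\ref{thm_ishii}.}
The plan is to lift the homotopy $\{\mathcal F_t\}_{t\in[0,1]}$ between $\mathcal F_1$ and $\mathcal F_2$ to a one-parameter family of normal pairs $(\mathcal F_t,\Sigma_t)$ interpolating between ones producing $P_1$ and $P_2$, and then to track how the associated flow-spines $P_{+}(\mathcal F_t,\Sigma_t)$ change as $t$ varies. Since by the discussion after Theorem~\ref{Ishii_FS} every carried flow admits a normal pair realising a prescribed flow-spine (after a bands-argument that joins finitely many flow-box local sections), we may assume that $P_i=P_{+}(\mathcal F_i,\Sigma_i)$ for suitable disks $\Sigma_i$, $i=1,2$. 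The first step is to build a smooth family $\Sigma_t$ such that $(\mathcal F_t,\Sigma_t)$ is a normal pair for all but finitely many $t$; this is done by putting the family into general position in the space of (flow, local section) pairs, where the locus of non-normal pairs has codimension one.

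Next, I would classify the codimension-one degenerations. Conditions (i)--(ii) of a normal pair are open, so only conditions (iii) and (iv) can fail transversally. Failure of (iii) means that at some $t_*$ the boundary $\partial\Sigma_{t_*}$ and its first return $\mathcal F_{t_*}(\partial\Sigma_{t_*};T_+(\cdot))$ become tangent at an isolated point, which modifies the singular set of the flow-spine by creating or removing a bigon bounded by two edges meeting at two triple points; this is precisely a second regular (lune) move as in Figure~\ref{fig3-8}. Failure of (iv) means that at some $t_*$ a point $x\in\partial\Sigma$ satisfies $\hat T_+(\hat T_+(x))\in\partial\Sigma$ as well, i.e.\ three successive returns of $\partial\Sigma_{t_*}$ meet at a common point; tracking the DS-diagram through this triple-return event yields a local rearrangement of four true vertices around a region, which is exactly a first regular (Matveev--Piergallini) move as in Figure~\ref{fig3-7}. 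In particular the branching induced by the flow is respected on both sides of $t_*$, so the moves occur in the branched category. Between successive critical values the combinatorial type of $P_{+}(\mathcal F_t,\Sigma_t)$ is constant, so the flow-spine changes by an isotopy; concatenating these pieces expresses $P_2$ as the image of $P_1$ under a finite sequence of first and second regular moves.

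It remains to reduce the general situation to the one above, because $P_1$ and $P_2$ may come from normal pairs whose sections are unrelated even for the same initial and terminal flows. To handle this, I would prove, as a lemma, that two normal pairs $(\mathcal F,\Sigma)$ and $(\mathcal F,\Sigma')$ for the \emph{same} non-singular flow $\mathcal F$ yield flow-spines related by first and second regular moves: join $\Sigma$ and $\Sigma'$ by a generic path of local sections of $\mathcal F$ (in the space of embedded disks transverse to $\mathcal F$, which is path connected after stabilisation by adding bands as in the argument following Theorem~\ref{Ishii_FS}) and apply the codimension-one analysis above with $\mathcal F$ held fixed. Combining this lemma at $t=0$ and $t=1$ with the homotopy analysis on the open interval, one gets the required sequence of moves from $P_2$ to $P_1$.

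The main obstacle I expect is the bookkeeping of the codimension-one analysis: one must verify that every generic degeneration of the return map on $\partial\Sigma_t$ corresponds to one of the finitely many local DS-diagram pictures in Figures~\ref{fig3-7} and~\ref{fig3-8}, and rule out exotic singularities (for instance higher-order tangencies, simultaneous failures of (iii) and (iv), or interactions with the $E$-cycle). This is essentially a transversality argument in the jet space of pairs (flow, embedded disk), and the careful model-by-model verification that each local picture is a single first or second regular move—rather than a composition of several—is the technical heart of the proof.
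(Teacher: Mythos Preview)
This theorem is not proved in the present paper: it is stated with a citation to \cite{Ish92} and used as a black box (notably in the proofs of Theorem~\ref{thm00} and Corollary~\ref{cor02}). There is therefore no proof here to compare your proposal against.

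That said, your outline is the natural strategy and is broadly in the spirit of Ishii's original argument in \cite{Ish92}: one tracks a one-parameter family of normal pairs and identifies the generic codimension-one degenerations of the first-return data on $\partial\Sigma_t$ with the first and second regular moves. Your identification of failure of condition~(iii) with the lune move and of condition~(iv) with the Matveev--Piergallini move is the right bookkeeping. The one place where your sketch is a bit loose is the claim that the space of embedded disks transverse to a fixed $\mathcal F$ is path connected ``after stabilisation by adding bands'': this is where the real work lies, and in \cite{Ish92} the argument proceeds more concretely by elementary modifications of the normal section (sliding, splitting and rejoining by bands) rather than by an abstract connectivity statement. If you want to turn your sketch into a complete proof you would need to make that step explicit, but as a high-level plan it is sound.
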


\begin{remark}
We may apply the moves from the left to the right in Figures~\ref{fig3-7} and~\ref{fig3-8} with fixing the flow carried by the flow-spine. On the other hand, we may need to homotope the flow when we apply the moves from the right to the left. 
Sometimes, such homotopy move cannot be obtained in the set of Reeb flows, see the proof of Theorem~\ref{thm00} below.
\end{remark}

\section{Admissibility and positivity}

\subsection{Admissibility condition}

Let $M$ be a closed, connected, oriented $3$-manifold.
Let $P$ be a branched simple spine of $M$ and $R_1,\ldots,R_n$ be the regions of $P$.
Assign orientations to these regions so that they satisfy the branching condition.
Let $\bar R_i$ be the metric completion of $R_i$ with the path metric inherited from a Riemannian metric on $R_i$. Let $\kappa_i:\bar R_i\to M$ be the natural extension of the inclusion $R_i\to M$. Assign an orientation to each edge of $P$ in an arbitrary way.

\begin{definition}
A branched simple spine $P$ is said to be {\it admissible} if there exists an assignment of real numbers $x_1,\ldots,x_m$ to the edges $e_1,\ldots,e_m$, respectively, of $P$ such that
for any $i\in\{1,\ldots,n\}$
\begin{equation}\label{eq1000}
   \sum_{\tilde e_j\subset \partial \bar R_i}\varepsilon_{ij} x_j>0,
\end{equation}
where $\tilde e_j$ is an open interval or a circle on $\partial \bar R_i$ such that
$\kappa_i|_{\tilde e_j}:\tilde e_j\to e_j$ is a homeomorphism, and $\varepsilon_{ij}=1$ if the orientation of $e_j$ coincides with that of $\kappa_i(\tilde e_j)$ induced from 
the orientation of $R_i$ and $\varepsilon_{ij}=-1$ otherwise.
\end{definition}

\begin{example}\label{ex1}
Let $P$ be the positive abalone in Figure~\ref{fig3-1} embedded in $S^3$.
Suppose that there exists a contact structure supported by $P$, that is, there exists a contact form $\alpha$ on $S^3$ whose Reeb flow is carried by $P$.
The abalone $P$ is obtained from the branched polyhedron described on the right-top in the figure by attaching the disk corresponding to the region $R_1$.
Since the Reeb flow is transverse to the regions $R_1$ and $R_2$, the integrated values
$\int_{\partial R_1}\alpha$ and $\int_{\partial R_2}\alpha$ should be positive, where the orientation of $\partial R_i$ is induced from that of $R_i$ for each $i=1,2$.
From the figure, we see that $\partial R_1=e_1+2e_2$ and $\partial R_2=-e_2$.
Setting $x_1=\int_{e_1}\alpha$ and $x_2=\int_{e_2}\alpha$, we have the inequalities
$x_1+2x_2>0$ and $x_2<0$. Therefore, the existence of an assignment of real numbers $x_1, x_2$
satisfying these inequalities to the edges $e_1, e_2$ is a necessary condition for $\Nbd(P;S^3)$ to have a contact form whose Reeb flow is positively transverse to $P$. This is the admissibility condition. Since $(x_1,x_2)=(3,-1)$ satisfies the two inequalities, the positive abalone $P$ is admissible.
\end{example}

There are many flow-spines that satisfy the admissibility condition. 
Below, we show that any branched special spine of a rational homology $3$-sphere satisfies the condition.

Let $m'$ be the rank of $H_1(S(P);\Integer)$.
Take a maximal tree $T$ of $S(P)$ and assign $0$ to the edges on $T$.
Let $e_{j_1},\ldots,e_{j_{m'}}$ be the edges on $S(P)$ not contained in $T$ and set 
\[
   a_{ij}=\sum_{\tilde e_j\subset \partial \bar R_i}\varepsilon_{ij}
\]
for $i=1,\ldots,n$ and $j=j_1,\ldots,j_{m'}$. In this setting, $P$ is admissible if and only if the set
\[
   C(P)=\{(x_1,\ldots,x_{m'})\in\Real^{m'}\mid a_{ij_1}x_1+\cdots+a_{ij_{m'}}x_{m'}>0,\;\,i=1,\ldots,n\}
\]
is non-empty.

Let $A$ be the $n\times m'$ matrix with the entries $a_{ij}$.
If the spine is special then $A$ is a 
%regular 
square 
matrix.

\begin{lemma}\label{lemma20}
Let $P$ be a branched special spine of $M$.
Then $\det(A)\ne 0$ if and only if $M$ is a rational homology $3$-sphere.
\end{lemma}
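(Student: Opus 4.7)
The plan is to identify $A$ with (the matrix of) the second boundary operator in the cellular chain complex of the pair $(M,T)$ with real coefficients, and then to read off $\ker A$ and $\operatorname{coker} A$ as the first two homology groups of $M$.

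First I would equip $M$ with its natural CW structure coming from the special spine $P$: the $0$-cells are the vertices $V(P)$, the $1$-cells are the $m$ edges of $S(P)$, the $2$-cells are the $n$ regions $R_1,\ldots,R_n$, and there is a single $3$-cell corresponding to the open ball $M\setminus P$. From $\chi(M)=0$ one gets $m-|V(P)|=n-1$, and (assuming $S(P)$ is connected, which is implicit for $\det A$ even to make sense, since in general $m'=m-|V(P)|+\#\pi_0(S(P))$) one obtains $m'=n$, so $A$ is a square $n\times n$ matrix. Unwinding the definition of $a_{ij}$, the cellular boundary of the $2$-cell $R_i$ is
\[
\partial_2 R_i=\sum_{j=1}^{m}\Bigl(\sum_{\tilde e_j\subset\partial\bar R_i}\varepsilon_{ij}\Bigr)e_j,
\]
so composing $\partial_2$ with the projection onto $C_1(M;\mathbb{R})/C_1(T;\mathbb{R})$ (which kills the tree edges) yields a map whose matrix, in the obvious bases, is precisely $A$. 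Since $T$ contains every vertex of $S(P)$, the cellular chain complex of the pair $(M,T)$ collapses to
\[
0\longrightarrow \mathbb{R}\xrightarrow{\;\partial_3\;}\mathbb{R}^n\xrightarrow{\;A\;}\mathbb{R}^{m'}\longrightarrow 0.
\]

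The proof is then completed by two standard facts. Because $M$ is an oriented closed $3$-manifold, $H_3(M;\mathbb{R})\cong\mathbb{R}$, which forces $\partial_3=0$; equivalently, in the DS-diagram description each region $R_i$ is covered once by $R_i^+$ with orientation matching and once by $R_i^-$ with orientation reversed, so the two contributions cancel in the boundary of the $3$-cell. Because $T$ is contractible, the long exact sequence of $(M,T)$ gives $H_k(M,T;\mathbb{R})\cong H_k(M;\mathbb{R})$ for all $k\ge 1$. Reading off homology from the displayed chain complex therefore produces $\ker A\cong H_2(M;\mathbb{R})$ and $\operatorname{coker} A\cong H_1(M;\mathbb{R})$. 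Hence $\det A\ne 0$ is equivalent to $H_1(M;\mathbb{R})=H_2(M;\mathbb{R})=0$, i.e.\ to $M$ being a rational homology $3$-sphere; by Poincar\'e duality either vanishing already implies the other.

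The only step requiring genuine care is the bookkeeping that matches the signs so that the quotient boundary map is literally $A$ (and not its transpose, or $A$ up to a global sign coming from the arbitrary orientations on the edges); this is a routine unpacking of the definitions of $\varepsilon_{ij}$ together with the cellular boundary formula, and I do not expect any substantive obstacle there.
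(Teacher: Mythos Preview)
Your proof is correct and follows essentially the same approach as the paper: both use the cellular chain complex coming from the special spine, observe that $\partial_3=0$ (each region appears twice with opposite orientations on the boundary of the $3$-cell), identify $\partial_2$ with $A$, and conclude via Poincar\'e duality. Your treatment is in fact slightly more careful than the paper's, since you make explicit the passage to the pair $(M,T)$ so that the quotient boundary map is literally $A$ rather than the full $\partial_2$; the paper glosses over this and simply asserts ``the map $\partial_2$ is given by the matrix $A$.''
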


\begin{proof}
The manifold $M$ has a cell decomposition 
whose $0$-cells are the vertices of $S(P)$, $1$-cells correspond to the edges of $S(P)$,
$2$-cells correspond to the open disks $P\setminus S(P)$ and 
$3$-cell corresponds to the open $3$-ball $M\setminus P$.
Consider the chain complex of this decomposition:
\[
    0\to C_3
    \overset{\partial_3}{\longrightarrow} C_2
    \overset{\partial_2}{\longrightarrow} C_1
    \overset{\partial_1}{\longrightarrow} C_0 \to 0,
\]
where $C_i$ is the chain of $i$-dimensional cells.
The map $\partial_3$ is the zero map because each $2$-cell appears twice on the boundary of
the $3$-cell with opposite orientations. The map $\partial_2$ is given by the matrix $A$.
Since $H_2(M;\Integer)$ has no torsion, $\det(A)\ne 0$ if and only if $H_2(M;\Integer)=0$.
By Poincar\'{e} duality and the universal coefficient theorem, $H_2(M;\Integer)=0$ if and only if
$H_i(M;\Rational)=H_i(S^3;\Rational)$ for $i\in\Integer$.
Thus the assertion follows.
\end{proof}

\begin{proposition}\label{lemma21}
A branched special spine of a rational homology $3$-sphere is admissible.
\end{proposition}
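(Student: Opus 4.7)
The plan is to view admissibility as the solvability of a system of strict linear inequalities $A\mathbf{x} > \mathbf{0}$ and to use the invertibility of $A$ provided by Lemma~\ref{lemma20} to construct a solution explicitly.

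First I would justify the gauge reduction set up just before Lemma~\ref{lemma20}, which identifies admissibility of $P$ with the nonemptiness of $C(P) \subset \Real^{m'}$. Given any edge-assignment $x_1, \ldots, x_m$, adding the coboundary of a $0$-cochain $f$ on the vertices of $S(P)$ (that is, replacing $x_k$ by $x_k$ plus $f$ evaluated at the head of $e_k$ minus $f$ evaluated at its tail) leaves the left-hand side of~\eqref{eq1000} unchanged, because each $\kappa_i(\partial \bar R_i)$ is a closed edge-walk in $S(P)$ and coboundaries integrate to zero along closed walks. Choosing $f$ suitably forces the $x_k$ to vanish on every edge of a fixed maximal tree $T \subset S(P)$, so the original system in $m$ variables reduces to the system defining $C(P)$ in the $m'$ remaining variables, as was done before Lemma~\ref{lemma20}.

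Next, since $P$ is special, the remark preceding Lemma~\ref{lemma20} says that $A$ is a square $n \times n$ matrix, and Lemma~\ref{lemma20} combined with the rational homology $3$-sphere hypothesis yields $\det(A) \neq 0$. Then I would simply set $\mathbf{x} := A^{-1}\mathbf{1}$, where $\mathbf{1} = (1, \ldots, 1)^{\mathsf{T}} \in \Real^n$; this gives $A\mathbf{x} = \mathbf{1}$, every coordinate of which is strictly positive, so $\mathbf{x} \in C(P)$ and $P$ is admissible.

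Since the substantive topological input is entirely absorbed into Lemma~\ref{lemma20}, the proof itself is essentially a one-line piece of linear algebra: invert a square matrix and feed it the all-ones vector. The only point requiring a brief verification is the cocycle/coboundary calculation in the gauge reduction, and that is completely routine, so I do not anticipate a genuine obstacle.
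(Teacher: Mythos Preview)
Your proof is correct and follows essentially the same approach as the paper: both reduce to the system $A\mathbf{x}>\mathbf{0}$ in the variables indexed by edges off a maximal tree, invoke Lemma~\ref{lemma20} to get $\det(A)\neq 0$, and conclude $C(P)\neq\emptyset$. The paper phrases the last step as ``the row vectors are linearly independent, hence $C(P)$ is non-empty,'' whereas you make this explicit by exhibiting $A^{-1}\mathbf{1}$; this is a cosmetic difference only.
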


\begin{proof}
The set $C(P)$ is the union of the $m'$ open half spaces in $\Real^{m'}$ given by
$a_{ij_1}x_1+\cdots+a_{ij_{m'}}x_{m'}>0$ for $i=1,\ldots,m'$.
Since we are considering a rational homology $3$-sphere, we have $\det(A)\ne 0$ by 
Lemma~\ref{lemma20}. Therefore the vectors ${}^t(a_{ij_1},\ldots,a_{ij_{m'}})$, $i=1,\ldots,m'$, are
linearly independent and hence $C(P)$ is non-empty.
\end{proof}

\begin{example}\label{ex2}
Let $S$ be an oriented $2$-sphere with two disjoint oriented disks $D_1$ and $D_2$ in $S$
and let $P$ be the branched polyhedron obtained from $S$ by identifying $D_1$ and $D_2$ by an orientation-preserving diffeomorphism. Then $P$ is a flow-spine of $S^2\times S^1$ 
without vertices (cf.~\cite[Remark~1.2]{EI05}).
The flow described in Figure~\ref{fig3-5} is a flow carried by $P$. 
This flow-spine satisfies $C(P)=\emptyset$ and hence it is not admissible.
In particular, there does not exist a contact form on $S^2\times S^1$ whose Reeb flow is positively transverse to $P$.
\end{example}

\begin{figure}[htbp]
\begin{center}
\includegraphics[width=3.7cm, bb=234 545 351 713]{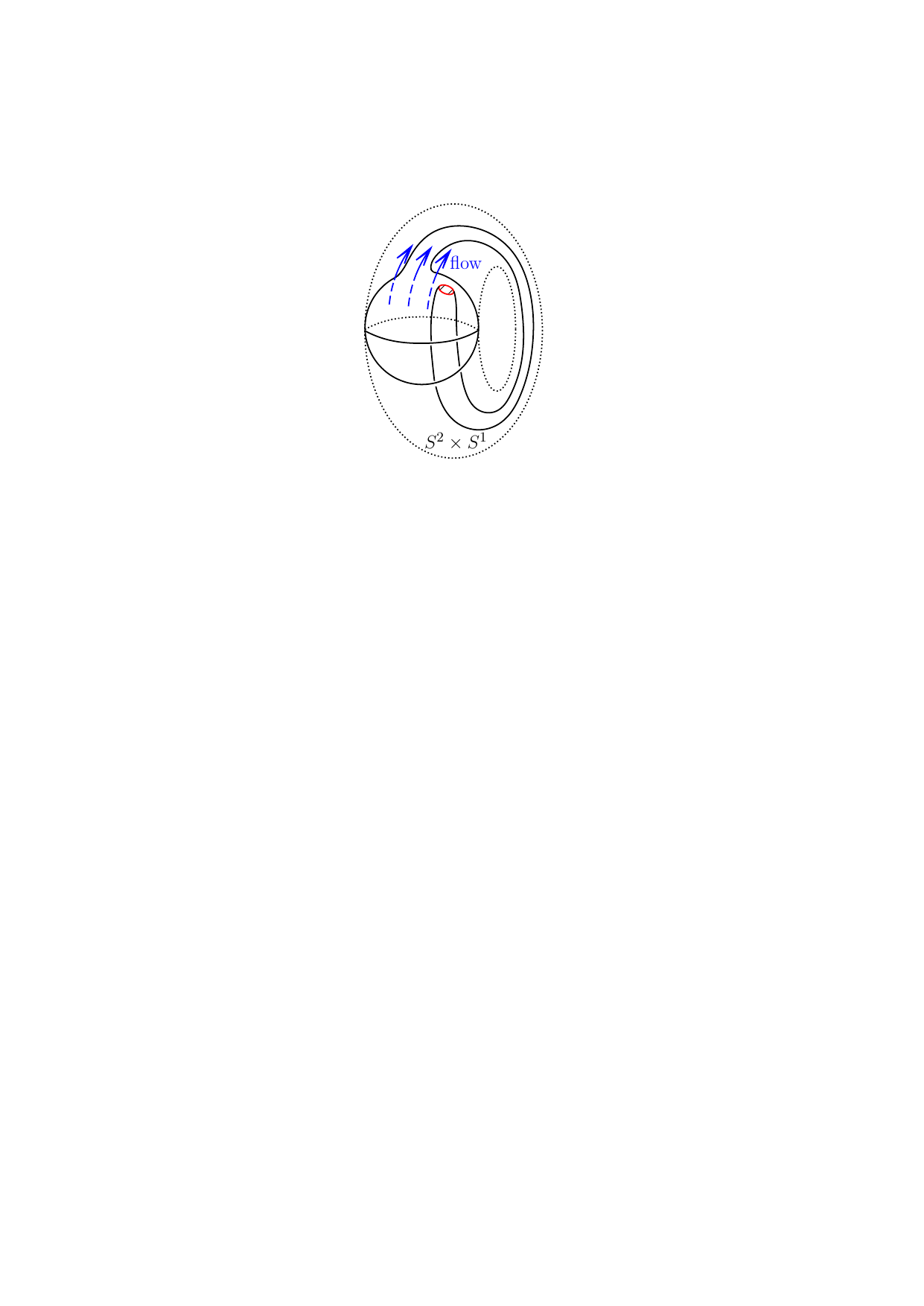}
\caption{A flow-spine of $S^2\times S^1$ with $C(P)=\emptyset$.}\label{fig3-5}
\end{center}
\end{figure}

\subsection{Necessity of positivity}

As mentioned in the introduction, we define the positivity of a flow-spine as follows.

\begin{definition}
A flow-spine is said to be {\it positive} if it has at least one vertex and all vertices are of $\ell$-type. 
\end{definition}

Although there is a flow-spine that is not positive but carries a Reeb flow, we need to restrict our discussion to positive flow-spines. The reason is that either the existence or the uniqueness of a contact manifold for a given flow-spine does not hold at least in some case as we will prove in the following theorem.
Recall that a contact structure $\xi$ on $M$ is said to be {\it supported} by a flow-spine $P$ if there exists a contact form $\alpha$ such that $\xi=\ker\alpha$ and its Reeb flow is carried by $P$.

\begin{theorem}\label{thm00}
Suppose that $M$ admits a tight contact structure. Then one of the following holds:
\begin{itemize}
\item[(1)] There exists a flow-spine of $M$ that does not support any contact structure.
\item[(2)] There exists a flow-spine of $M$ supporting two contact structures that are not contactomorphic.
\end{itemize}
\end{theorem}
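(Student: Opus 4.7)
The plan is to argue by contradiction. Assume that neither (1) nor (2) holds: every flow-spine $P$ of $M$ supports at least one contact structure, and all contact structures supported by a single flow-spine lie in one contactomorphism class. This produces a well-defined extension
\[
\Phi \colon \{\text{flow-spines of } M\}/\text{homeo.} \longrightarrow \{\text{contact structures on } M\}/\text{contactom.}
\]
of the assignment of Theorem~\ref{thm01} from positive flow-spines to all flow-spines of $M$.

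To exploit the hypothesis, I would use the existence of a tight contact structure $\xi_t$ together with a (full) Lutz twist along a closed Reeb orbit to produce an overtwisted contact structure $\xi_o$ on $M$ that is homotopic to $\xi_t$ as a co-oriented $2$-plane field, so that $[\xi_t] \neq [\xi_o]$. Since every closed oriented $3$-manifold is parallelizable and the two plane fields are homotopic, the corresponding Reeb vector fields can be arranged to be homotopic through nowhere-vanishing vector fields, so their Reeb flows are homotopic as non-singular flows. Theorem~\ref{thm02} then supplies positive flow-spines $P_t, P_o$ supporting $\xi_t, \xi_o$, and Theorem~\ref{thm_ishii} provides a finite sequence of first and second regular moves $P_t = Q_0 \to Q_1 \to \cdots \to Q_k = P_o$. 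Since $\Phi(Q_0) = [\xi_t]$ and $\Phi(Q_k) = [\xi_o]$ are distinct, there exists an index $i^{*}$ with $\Phi(Q_{i^{*}}) \neq \Phi(Q_{i^{*}+1})$. By the remark following Theorem~\ref{thm_ishii}, moves applied from left to right preserve the carried flow and therefore, by the uniqueness assumption, preserve $\Phi$; so the jump must occur across a right-to-left move, which involves a genuine homotopy of the carried flow away from the set of Reeb flows.

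The main obstacle, and the technical heart of the argument, is to extract the contradiction from this jump. My plan is to localize: the regular move alters the flow-spine only inside a small ball $B \subset M$, so the hypothesized Reeb-carrying contact forms $\alpha_{i^{*}}$ and $\alpha_{i^{*}+1}$ can be chosen to coincide on $M \setminus B$. The question becomes whether their restrictions to $B$ are connected by a one-parameter family of contact forms on $B$ with matching boundary values. This is precisely the contact-geometric flexibility exploited in the proof of Theorem~\ref{thm01}, where the reference $1$-form $\eta$ and the inequality $\eta \wedge d\eta \geq 0$ depend essentially on positivity; the $r$-type vertices produced by a generic right-to-left move obstruct this interpolation. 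If the interpolation succeeds, Gray's stability yields an isotopy of contact structures and hence $\Phi(Q_{i^{*}}) = \Phi(Q_{i^{*}+1})$, contradicting the choice of $i^{*}$. If it fails, one shows directly that the flow carried by $Q_{i^{*}+1}$ cannot be realized as the Reeb flow of any positive contact form on $M$, so $Q_{i^{*}+1}$ supports no contact structure, contradicting the negation of (1). Either horn of the dichotomy yields a contradiction, proving that one of (1) or (2) must hold.
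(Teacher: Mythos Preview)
Your overall strategy matches the paper's: argue by contradiction, use a Lutz full-twist to produce a tight $\xi_t$ and an overtwisted $\xi_o$ in the same homotopy class of plane fields, get flow-spines carrying their Reeb flows, connect them by a sequence of regular moves via Theorem~\ref{thm_ishii}, and show $\Phi$ is constant along the sequence. Up through your second paragraph this is essentially the paper's argument.

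The gap is in your final paragraph. You correctly observe that a left-to-right regular move preserves the carried flow, hence preserves $\Phi$ under the standing assumptions; you then conclude the jump occurs at a right-to-left move and propose a local interpolation/dichotomy analysis. This last step is both unnecessary and, as written, does not work. The point you are missing is the trivial symmetry: a right-to-left move $Q_{i^{*}} \to Q_{i^{*}+1}$ is precisely a left-to-right move $Q_{i^{*}+1} \to Q_{i^{*}}$. Since you are assuming the negation of~(1), $Q_{i^{*}+1}$ supports some contact structure, and its Reeb flow is carried by $Q_{i^{*}+1}$; the left-to-right move $Q_{i^{*}+1} \to Q_{i^{*}}$ then carries that same Reeb flow to $Q_{i^{*}}$, so the same contact structure is supported by $Q_{i^{*}}$. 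By the negation of~(2), $\Phi(Q_{i^{*}}) = \Phi(Q_{i^{*}+1})$. Thus \emph{no} move, in either direction, can change $\Phi$, and the contradiction is immediate. This is exactly how the paper closes the argument in one sentence.

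Your proposed localization does not salvage anything: there is no reason the two contact forms $\alpha_{i^{*}}$ and $\alpha_{i^{*}+1}$ should agree outside a small ball (they are merely two contact forms whose Reeb flows happen to be carried by nearby flow-spines), and the second horn of your dichotomy (``one shows directly that $Q_{i^{*}+1}$ supports no contact structure'') is asserted without argument and in fact contradicts your standing assumption that~(1) fails. Minor remark: invoking Theorem~\ref{thm02} to obtain $P_t, P_o$ is overkill and logically awkward since that theorem is proved later; Theorem~\ref{Ishii_FS} (every non-singular flow admits a flow-spine) suffices and is what the paper uses.
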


\begin{proof}
Let $\xi_0$ and $\xi_1$ be contact structures on $M$ that belong to the same homotopy class of $2$-plane fields. Let $\mathcal F_0$ and $\mathcal F_1$ be Reeb flows generated by Reeb vector fields of $\xi_0$ and $\xi_1$, respectively. These flows are homotopic.
Let $P_0$ and $P_1$ be flow-spines that carry $\mathcal F_0$ and $\mathcal F_1$, respectively.

Suppose that $\xi_0$ is tight, and let $\xi_1$ be an overtwisted contact structure on $M$ obtained from $\xi_0$ by applying a Lutz full-twist~\cite{Lut77}. Note that a Lutz full-twist does not change the homotopy class of $\xi_0$.
By Theorem~\ref{thm_ishii}, there exists a sequence $P_0=Q_1, Q_2, \ldots, Q_m=P_1$ of flow-spines such that $Q_{i+1}$ is obtained from $Q_i$ by some regular move for $i=1,\ldots,m-1$.
Now assume that both of the statements~(1) and~(2) in the assertion do not hold. 
Then each $Q_i$ supports a unique contact structure up to contactomorphism.
Since the regular moves from the left to the right in Figures~\ref{fig3-7} and~\ref{fig3-8} can be applied with fixing the flows, the contact structures supported by $Q_i$ and $Q_{i+1}$ are contactomorphic. Hence $\xi_0$ and $\xi_1$ are contactomorphic and this is a contradiction.
\end{proof}

\begin{remark}
The positive abalone in Figure~\ref{fig3-1} is a positive flow-spine.
The flow-spine in Example~\ref{ex2} carries a flow transverse to $S^2\times \{t\}$ for any $t\in S^1$.
Such a flow cannot be a Reeb flow. This flow-spine is not positive by definition.
\end{remark}

\subsection{Admissibility of positive flow-spines}

In this subsection, we prove the following proposition. 

\begin{proposition}\label{prop01}
Any positive flow-spine satisfies the admissibility condition.
\end{proposition}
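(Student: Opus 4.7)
The plan is to use a Farkas-type alternative to convert the admissibility condition into a dual combinatorial statement, then exploit the local structure at $\ell$-type vertices to rule that dual out.

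First, I would form the $n\times m$ matrix $A$ with $(i,j)$-entry $\sum_{\tilde e_j\subset\bd\bar R_i}\varepsilon_{ij}$. Admissibility is exactly the solvability of $Ax>0$ componentwise. By Gordan's theorem of the alternative, this fails if and only if there exists $y\in\Real_{\geq 0}^n\setminus\{0\}$ with $A^Ty=0$, or equivalently a non-trivial non-negative $2$-cycle $\sum_i y_i\bar R_i$ in the cellular chain complex of $P$. It therefore suffices to rule out such a $y$ whenever $P$ is positive.

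Next, I would translate this dual condition edge-by-edge. At each edge $e_j$, the branching picks out a unique incident region $R_c$ whose induced orientation on $e_j$ disagrees with those of the other two incident regions $R_a,R_b$. The vanishing of $A^Ty$ at $e_j$ thus reads (up to multiplicities) $y_c=y_a+y_b$, so $y_c\geq y_a$ and $y_c\geq y_b$, with strict inequality unless the remaining weight vanishes. This furnishes a preferred ``upward'' direction at each edge, along which $y$ is monotone non-decreasing as one passes from the two lower sheets to the single top sheet.

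The contradiction then comes from analysis at any vertex $v$ of $P$; at least one exists by the definition of positivity. At an $\ell$-type vertex, the four incident edges and six incident region-corners are arranged in the specific combinatorial pattern of model~(iii) of Figure~\ref{fig2}. Writing the four edge relations from the previous paragraph in terms of the six corner weights, under the sign pattern dictated by the $\ell$-type branching, produces a system of four linear equations in at most six non-negative unknowns whose only solution is $0$. Propagating through adjacent vertices and regions by the connectedness of $P$ forces $y_i=0$ for every $i$, contradicting $y\neq 0$.

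The main obstacle is precisely the combinatorial check at an $\ell$-type vertex: one must fix a labelling of the six region-corners and the four edges, identify which region is the ``top'' at each of the four edges from the $\ell$-type data, and verify that the resulting linear system has only the trivial non-negative solution. The corresponding verification fails at an $r$-type vertex, where the opposite sign pattern permits non-trivial non-negative solutions, and this mismatch is exactly why flow-spines with $r$-type vertices may fail to be admissible and why positivity is the right hypothesis.
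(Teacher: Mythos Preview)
Your Gordan-duality reformulation in step 1 is correct, and the edge relation $y_c=y_a+y_b$ in step 2 is also correct. The genuine gap is step 3: the four edge relations at an $\ell$-type vertex do \emph{not} force the six corner weights to vanish. Label the six corners at the vertex as the four quadrants $Q_{++},Q_{+-},Q_{-+},Q_{--}$ of the main sheet together with the two wings $W_\ell,W_{\ell'}$, and write $a,b,c,d,e,f$ for their weights. With the branching determined by the flow, the disagreeing region at each of the four half-edges is the quadrant on the one-sheet side, and the four relations read
\[
a=b+e,\qquad c=d+e,\qquad a=c+f,\qquad b=d+f.
\]
These have only rank three, and the non-negative solution cone is the full three-dimensional cone $\{(d+e+f,\,d+f,\,d+e,\,d,\,e,\,f):d,e,f\geq 0\}$; for instance $a=b=c=d=1$, $e=f=0$ is a solution. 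One checks the same phenomenon for every possible assignment of the disagreeing corner at each half-edge, so the $\ell$-type versus $r$-type distinction does not show up as the trivial-versus-nontrivial dichotomy you describe at a single vertex. Consequently step 4 has nothing to propagate.

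The dual formulation via Gordan's theorem is a legitimate starting point, but ruling out a nonzero non-negative $2$-cycle requires a genuinely global argument that tracks how the ``disagreeing'' sides fit together across many edges and vertices. The paper takes a direct, constructive route instead: it classifies edges into four types $a,b,c,d$ according to how they sit on the DS-diagram of a positive flow-spine, assigns explicit real numbers to each type, and verifies the boundary inequality region by region using Lemma~\ref{lemma3-1}; a residual degenerate case is handled by extracting a partial order on the type-$b$ edges from the $E$-cycle structure. That proof uses the positivity hypothesis not at a single vertex but through the global pattern of edge types on the DS-diagram, which is exactly where your local argument loses traction.
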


Let $P$ be a positive flow-spine.
In the DS-diagram of $P$, the preimage of each edge $e_j$ of $P$ by the identification map $\jmath'$ consists of three copies of $e_j$, one lies on the $E$-cycle, another one lies in the inside of the $E$-cycle and the last one lies outside.
The diagram in a neighborhood of an edge on the $E$-cycle is one of the four cases described on the top in Figure~\ref{fig3-3},
the corresponding part of the spine is described in the middle, 
and the diagram in a neighborhood of the corresponding edge in the inside of the $E$-cycle is as shown on the bottom. The $E$-cycles in the top and bottom figures are described by dashed lines.

\begin{figure}[htbp]
\begin{center}
\includegraphics[width=14cm, bb=129 506 537 712]{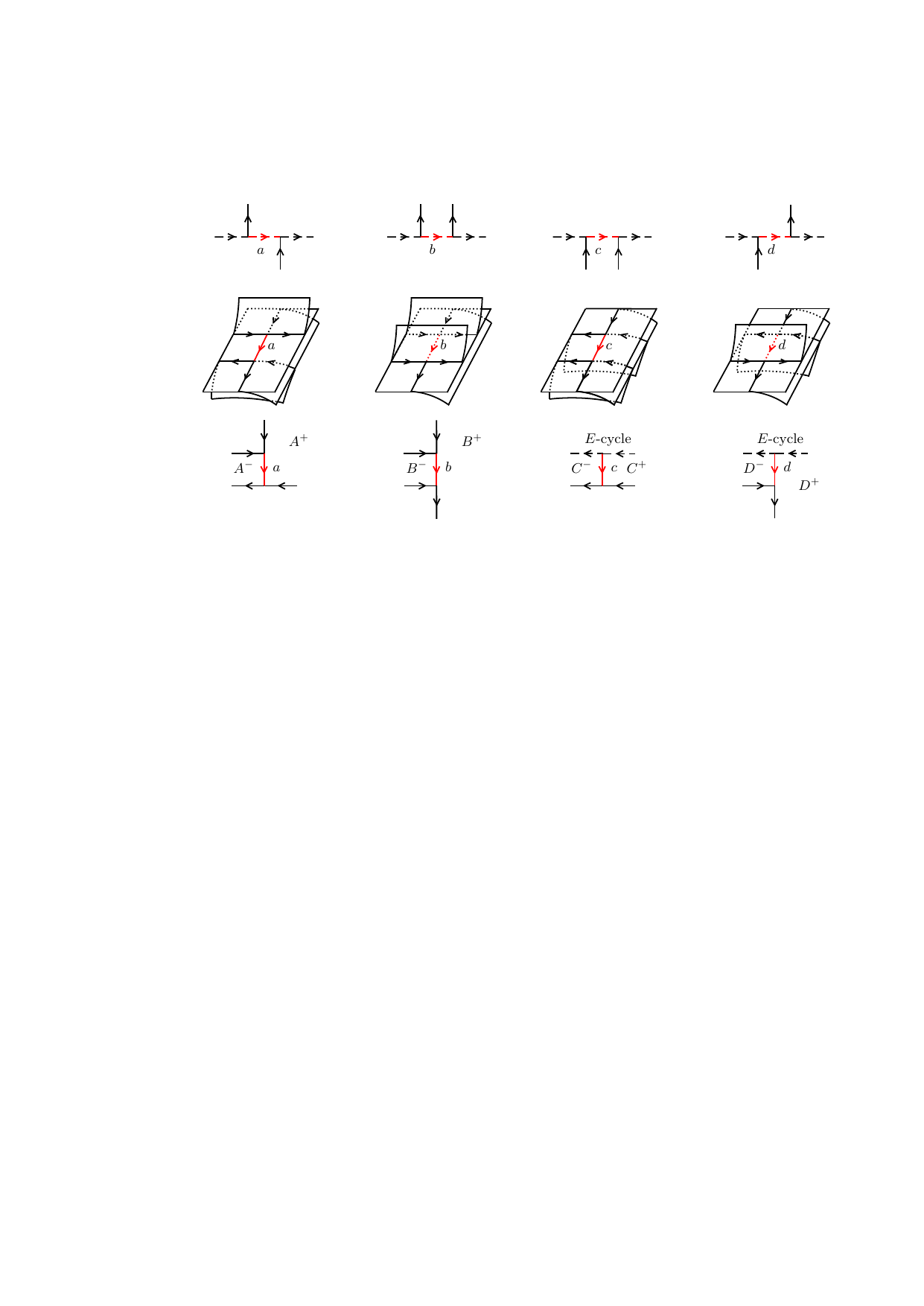}
\caption{Neighborhoods of edges on a DS-diagram.}\label{fig3-3}
\end{center}
\end{figure}

We put the labels $A^-$ and $A^+$ to the two sides of each edge of type $a$ as shown on the bottom in Figure~\ref{fig3-3} and we put the labels $B^-, B^+, C^-, C^+, D^-, D^+$ in the same manner.

\begin{lemma}\label{lemma3-1}
A region in the inside of the $E$-cycle in a DS-diagram and not adjacent to the $E$-cycle 
has only label $A^-$.
\end{lemma}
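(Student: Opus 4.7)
The approach is a case analysis on the possible labels of each boundary edge of $R$. The claim splits into two parts: every boundary edge $e$ of $R$ must be the interior copy of some edge of $P$ whose $E$-cycle copy is of type $a$, and $R$ must lie on the $A^-$ side of $e$. Equivalently, I would rule out seven of the eight possible labels $\{A^+, B^+, B^-, C^+, C^-, D^+, D^-\}$ on $\partial R$, using positivity.

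The setup is as follows. Since $R$ is inside the $E$-cycle and not adjacent to it, every boundary edge and every boundary vertex of $R$ lies in the interior of $S^+$. Each such boundary vertex is a preimage under $\jmath'$ of an $\ell$-type vertex of $P$, by the positivity hypothesis. Given $e\subset \partial R$, its $E$-cycle copy falls into exactly one of the four types in Figure~\ref{fig3-3}, and the side of $e$ on which $R$ sits receives one of the eight labels introduced after Figure~\ref{fig3-3}.

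The main step is to show, for each of the seven excluded labels, that the region of the DS-diagram sitting on that side must meet the $E$-cycle. This is done by inspecting the two endpoints of $e$: at such an endpoint $v$, the identification map $\jmath'$ carries a standard neighborhood of the $\ell$-type vertex of $P$ (the model in Figure~\ref{fig2}(iii)) to $S^+$. Tracking in each of the four types $a,b,c,d$ how the lifts of the branched neighborhood of $v$ are arranged against the $E$-cycle, and reading off directly from the bottom row of Figure~\ref{fig3-3} which of the two sides of $e$ contains a segment of the $E$-cycle (or a vertex lift on the $E$-cycle) in its local picture, one verifies case by case that $A^+$, $B^\pm$, $C^\pm$, $D^\pm$ each force adjacency to the $E$-cycle, while $A^-$ is free of this obstruction. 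The positivity assumption is essential here: at an $r$-type vertex the lifts would be arranged differently and the conclusion would fail, which is exactly what enables the proof.

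The hard part will be the combinatorial bookkeeping in this case analysis, since the four types lift to $S^+$ in slightly different manners and the identification pattern at an $\ell$-type vertex has to be read off correctly against the $E$-cycle convention. Once all seven excluded cases are checked, the conclusion that every boundary edge of $R$ carries label $A^-$ follows immediately.
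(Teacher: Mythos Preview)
Your proposed local argument works only for the labels $C^\pm$ and $D^\pm$: for edges of types $c$ and $d$, the interior copy shown in the bottom row of Figure~\ref{fig3-3} does indeed have an endpoint on the $E$-cycle, so a region carrying one of these labels is adjacent to the $E$-cycle. But for edges of types $a$ and $b$, the interior copy lies entirely in the open interior of $S^+$---neither side $A^+$, $A^-$, $B^+$, $B^-$ contains a segment of the $E$-cycle or a vertex lift on it in its local picture. So the claimed case check that ``$A^+$, $B^\pm$ each force adjacency to the $E$-cycle'' cannot be carried out; there is no local obstruction at all for these three labels.

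The paper handles these remaining labels by two global arguments that your sketch is missing. First, one tracks the orientation of each boundary edge relative to the boundary orientation of $\partial R^+$: from the bottom row of Figure~\ref{fig3-3} one sees that passing through a side labeled $A^+$ or $B^-$ flips the orientation from ``consistent'' to ``opposite'', and no label flips it back. Since $\partial R^+$ is a closed curve, this forbids $A^+$ and $B^-$ entirely, and the same analysis shows $A^-$ (always ``opposite'') and $B^+$ (always ``consistent'') cannot be mixed. Second, to rule out the all-$B^+$ case, one observes from the middle row of Figure~\ref{fig3-3} that the $B^+$ side corresponds under $\jmath'$ to the $E$-cycle itself; but each type-$b$ picture also shows an edge of $S(P)$ not lying on $\jmath'(\partial R^+)$, contradicting the fact that $\jmath'$ maps the $E$-cycle onto all of $S(P)$. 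Neither of these ideas is present in your proposal, and without them the argument for $A^+$, $B^+$, $B^-$ has a genuine gap.
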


\begin{proof}
Let $R^+$ be a region in the assertion.
From the bottom figures in Figure~\ref{fig3-3}, we see that $\partial R^+$ does not contain edges of type $c$ and $d$. Hence $R^+$ has only labels $A^-, A^+, B^-$ and $B^+$. 
We observe how the orientations on the edges change when we travel along $\partial R^+$.
We say that the orientation of an edge is ``consistent'' if it coincides with that of $\partial R^+$
and ``opposite'' otherwise.
From the bottom figures in Figure~\ref{fig3-3}, we see that the orientation changes from ``consistent'' to ``opposite'' after passing through the sides $A^+$ and $B^-$ and there is no case where the orientation changes from ``opposite'' to ``consistent''. Hence $R^+$ cannot have labels $A^+$ and $B^-$. Moreover, since the orientations of the edges near the sides $A^-$ and $B^+$ are ``opposite'' and ``consistent'', respectively, $R^+$ can have either only label $A^-$ or only label $B^+$.
Suppose that it has only label $B^+$. Then, from the middle figure in Figure~\ref{fig3-3}, we see that $\jmath'(\partial R^+)$ corresponds to the $E$-cycle. However, in the same figure, we see that there is an edge of $P$ that is not contained in $\jmath'(\partial R^+)$.
Since $S(P)$ is the image of the $E$-cycle by $\jmath'$, we have a contradiction.
\end{proof}

Now we prove the proposition.

\begin{proof}[Proof of Proposition~\ref{prop01}]
We set the assignment $x$ of a real number $x(e_j)$ to each edge $e_j$ of the singular set $S(P)$ of a positive flow-spine $P$ as 
\[
\begin{cases}
x(e_j)=-\frac{1}{2} & \text{if $e_j$ is of type $a$} \\
x(e_j)=2 & \text{if $e_j$ is of type $b$} \\
x(e_j)=3 & \text{if $e_j$ is of type $c$} \\
x(e_j)=4 & \text{if $e_j$ is of type $d$.}
\end{cases}
\]
By Lemma~\ref{lemma3-1}, 
if a region $R^+$ lies inside the $E$-cycle and is not adjacent to the $E$-cycle
then $\sum_{\tilde e_j\subset \partial\bar R}\varepsilon_{ij}x(e_j)>0$.

Let $R^+$ be a region of the DS-diagram of $P$ lying inside the $E$-cycle and adjacent to the $E$-cycle.
The boundary $\partial R^+$ of $R^+$ consists of an alternative sequence
$p_1, q_1, p_2, q_2, \ldots, p_m, q_m$ of paths $p_1,\dots, p_m$ on the $E$-cycle and paths $q_1,\ldots,q_m$ lying inside the $E$-cycle, see Figure~\ref{fig3-9}.
The orientation of each edge on the path $p_i$ is consistent with the orientation of $\partial R^+$. The starting edge of the path $q_i$ is of type $c$ if the label on the region $R^+$ is $C^+$ and of type $d$ if it is $D^+$. Their orientations are consistent with that of $\partial R^+$.
We follow $q_i$ from the starting edge and check if the orientations of the edges are consistent with or opposite to that of $\partial R^+$. From the four figures on the bottom in Figure~\ref{fig3-3}, we see that the orientation changes from ``consistent'' to ``opposite'' 
when $\partial R^+$ passes through an edge with label $A^+$, $B^-$, $C^+$ or $D^-$, and there is no edge where the orientation changes from ``opposite'' to ``consistent''. This means that the path $q_i$ divides into two paths $q'_i$ and $q''_i$ so that the orientations of all edges on $q_i'$ are consistent with that of $\partial R^+$ and those of all edges of $q_i''$ are opposite.

\begin{figure}[htbp]
\begin{center}
\includegraphics[width=6cm, bb=174 507 401 712]{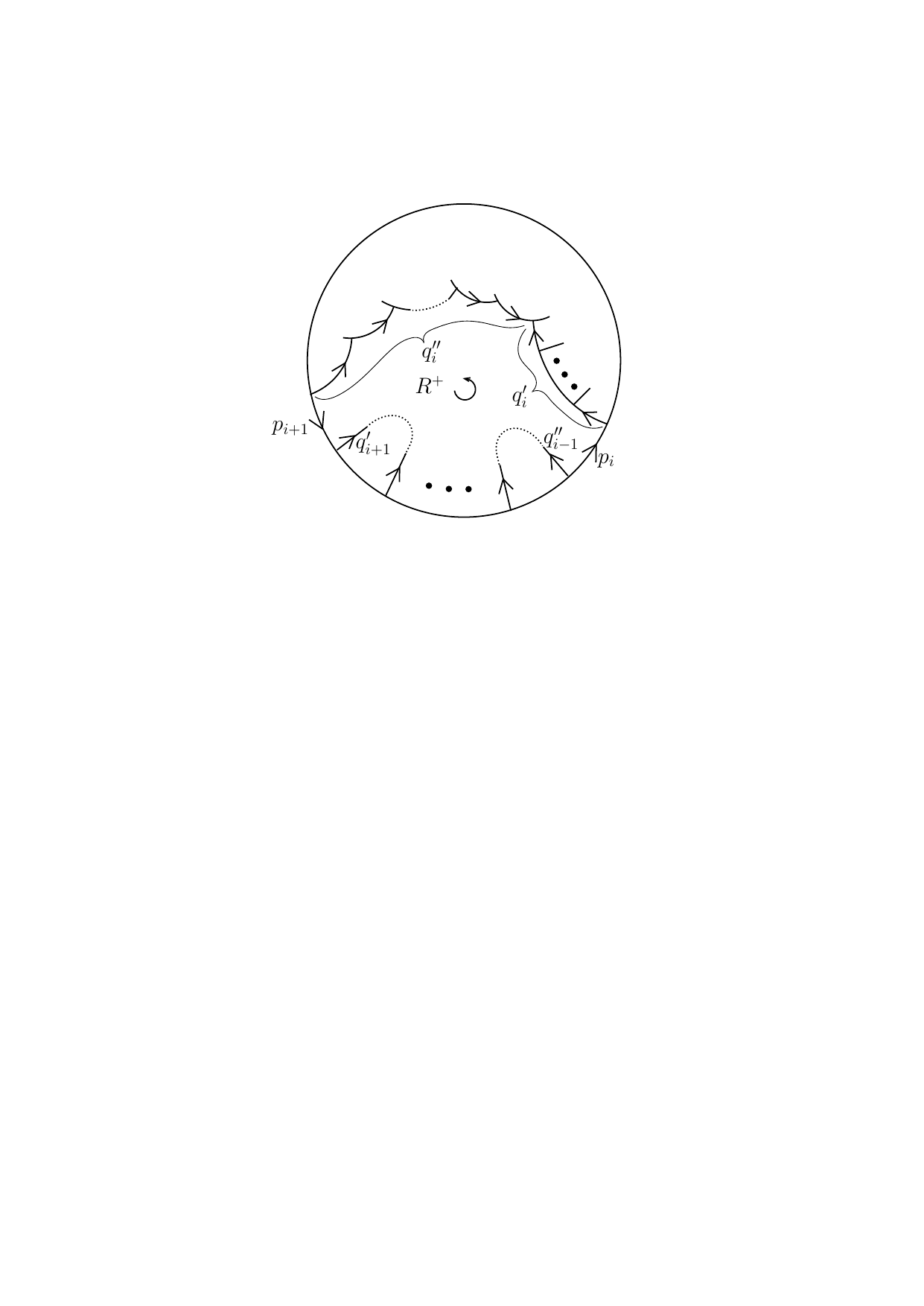}
\caption{The paths $p_i$ and $q_i$, where $q_i$ is the concatenation of $q_i'$ and $q_i''$.}\label{fig3-9}
\end{center}
\end{figure}

We are going to observe which types of edges can appear on each of paths $p_i$ and $q_i$. 
By the four figures on the top in Figure~\ref{fig3-3},  the sequence of types of the edges of $p_i$ is either $b$ or $ac^{n_1}d$, where $n_1\geq 0$. The sum $x(p_i)$ of the assignment $x$ to the edges on $p_i$ is
\[
x(p_i)=
\begin{cases}
2 & \text{if the type is $b$} \\
-\frac{1}{2}+3n_1+4=3n_1+\frac{7}{2} & \text{if the type is $ac^{n_1}d$.}
\end{cases}
\]
By the four figures on the bottom in Figure~\ref{fig3-3},  
the sequence of types of the edges of $q_i'$ is either $c$ or $db^{n_2}a$, where $n_2\geq 0$, and the sum $x(q_i')$ of the assignment $x$ to the edges on $q_i'$ is 
\[
x(q_i')=
\begin{cases}
3 & \text{if the type is $c$} \\
4+2n_2-\frac{1}{2}=2n_2+\frac{7}{2} & \text{if the type is $db^{n_2}a$.} 
\end{cases}
\]
By the same figures,
the sequence of types of the edges of $q_i''$ is either $d^{-1}$ or $b^{-1}a^{-n_3}c^{-1}$, where $n_3\geq 0$, and the sum $x(q_i'')$ of the assignment $x$ to the edges on $q_i''$ is 
\[
x(q_i'')=
\begin{cases}
-4 & \text{if the type is $d^{-1}$} \\
-2+\frac{1}{2}n_3-3=\frac{1}{2}n_3-5  & \text{if the type is $b^{-1}a^{-n_3}c^{-1}$.} 
\end{cases}
\]
The minimum values of $x(p_i)$, $x(q_i')$ and $x(q_i'')$ are $2$, $3$ and $-5$, respectively.
Hence the sum of real numbers on the edges on $\partial R^+$ is positive unless 
the types of $p_i$, $q_i'$ and $q_i''$ are $b$, $c$ and $b^{-1}c^{-1}$, respectively, for all $i=1,\ldots,m$.

Suppose that the boundary $\partial R^+$ of $R^+$ consists of a sequence of edges 
\begin{equation}\label{eqxxx}
   b_1c_1b_2^{-1}c_2^{-1} b_3c_3b_4^{-1}c_4^{-1}\cdots b_{2\ell-1}c_{2\ell-1}b_{2\ell}^{-1}c_{2\ell}^{-1},
\end{equation}
where $b_j$ is an edge of type $b$ and $c_j$ is an edge of type $c$. 
For each $j=1,\ldots,\ell$, we set a relation $\succ$ between two edges $b_{2j-1}$ and $b_{2j}$ as $b_{2j-1}\succ b_{2j}$. 
For each region inside the $E$-cycle whose boundary is given in the above form,
we give the relation $\succ$ to the edges of type $b$ in the same manner.

Now assume that there exists a sequence of edges $e_1, e_2, \ldots, e_k$ of $P$ of type $b$ that gives a cyclic order as $e_1\succ e_2\succ\cdots\succ e_k\succ e_{k+1}=e_1$.
For each $s=1,\ldots,k$, the relation $e_s\succ e_{s+1}$ implies that there exists a region whose boundary contains the sequence $e_sf_se_{s+1}^{-1}f_{s+1}^{-1}$, where $f_s$ and $f_{s+1}$ are edges of type $c$.
However, this is impossible since the edges $f_1,\ldots, f_{k-1}$ appear on the $E$-cycle as shown in Figure~\ref{fig3-4} and we cannot set $f_k$ for $e_k\succ e_1$.
This means that $\succ$ is a partial order for the edges of $P$ of type $b$.

\begin{figure}[htbp]
\begin{center}
\includegraphics[width=6cm, bb=159 472 412 713]{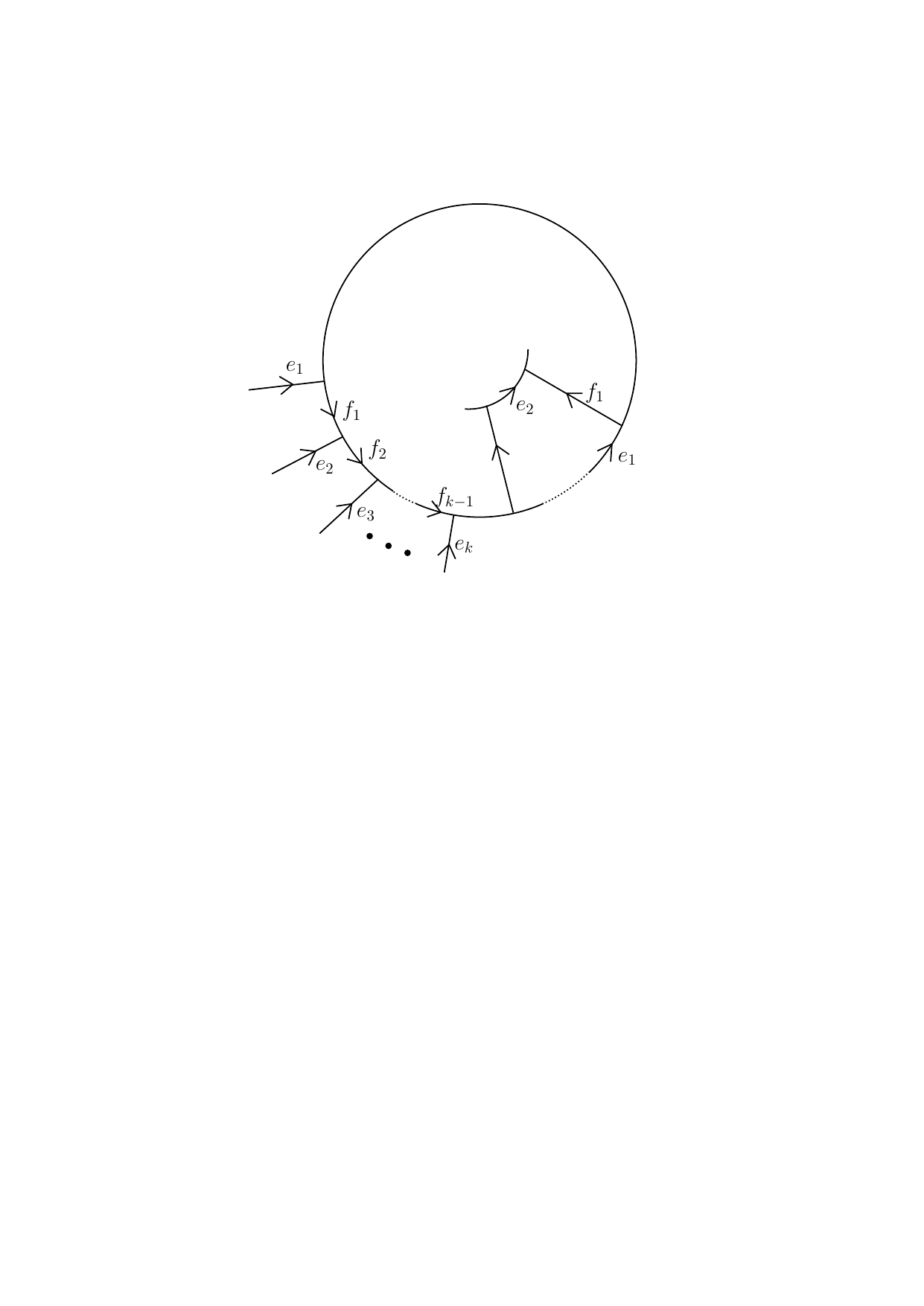}
\caption{Positions of $e_s$ for $s=1,\ldots,k$ and $f_s$ for $s=1,\ldots,k-1$.}\label{fig3-4}
\end{center}
\end{figure}

Let $\{e_1,\ldots, e_{n_b}\}$ be the set of edges of $P$ of type $b$, equipped with the partial order $\succ$.
We modify the assignment $x$ to 
these edges as $x(e_j)=2+\delta_j$,
where $\delta_j$'s are sufficiently small positive real numbers such that 
$\delta_{j_1}>\delta_{j_2}$ if $e_{j_1}\succ e_{j_2}$.
Then the sum of the real numbers assigned to the edges in~\eqref{eqxxx} becomes positive.
This shows that the modified assignment satisfies the admissibility condition.
\end{proof}

\section{Reference $1$-forms of positive flow-spines}

Let $M$ be a closed, connected, oriented, smooth $3$-manifold and $P$ be a positive flow-spine of $M$.
In this section, we define a $1$-form $\eta$ on $M$ associated with $P$ such that $\eta\land d\eta\geq 0$ using the decomposition~\eqref{decomp}.

\subsection{Decomposition and gluing maps}

For a positive flow-spine $P$ of $M$, set
$Q=\Nbd (S(P);P)$ and 
let $\mathcal R_i$ be the connected component of $P\setminus \Int Q$ contained in the region $R_i$ of $P$.
Let $\Nbd(S(P);M)$ be a neighborhood of $S(P)$ in $M$,
choose a neighborhood $N_P=\Nbd(P;M)$ of $P$ in $M$ sufficiently thin with respect to $\Nbd(S(P);M)$ and set $N_Q=\Nbd(P;M)\cap \Nbd(S(P);M)$. 
Then $M$ decomposes as
\begin{equation}\label{decomp}
   M=N_P\cup N_D=\left(N_Q\cup\bigcup_{i=1}^n N_{\mathcal R_i}\right)\cup N_D,
\end{equation}
where $N_{\mathcal R_i}$ is the connected component of the closure of $N_P\setminus N_Q$ containing $\mathcal R_i$, 
which is diffeomorphic to $\mathcal R_i\times [0,1]$, and $N_D$ is the closure of $M\setminus N_P$, which is a $3$-ball.
By identifying the $3$-ball $B^3$ in Section~\ref{sec22} with $N_D$, we draw 
the $E$-cycle on $\partial N_D$ and regard $N_D$ as $D^2\times [0,1]$, where $D^2$ is the unit $2$-disk, so that a tubular neighborhood of the $E$-cycle on $\partial N_D$ corresponds to $\partial D^2\times [0,1]$.

To define the gluing maps of these pieces, we re-choose $N_{\mathcal R_i}$ and $N_D$ slightly larger.
For $i=1,\ldots,n$, we denote the gluing map of $N_{\mathcal R_i}$ to $N_Q$ by $g_{\mathcal R_i}$,
which is a diffeomorphism $g_{\mathcal R_i}:N_{\mathcal R_i'}\to N_Q$ to the image, 
where $\mathcal R_i'=\Nbd(\partial \mathcal R_i;\mathcal R_i)$ and $N_{\mathcal R_i'}=\mathcal R_i'\times [0,1]\subset N_{\mathcal R_i}$.
Also, we denote the gluing map of $N_D$ to $N_P$ by $g_D$, which is a diffeomorphism 
$g_D:\Nbd(\partial N_D;N_D)\to N_P$ to the image.

We may assume that a flow carried by $P$ is positively transverse to $D^2\times \{t_0\}$ on $N_D$ and $\mathcal R_i\times \{t_0\}$ on $N_{\mathcal R_i}$ for any $t_0\in[0,1]$.

\subsection{Construction on $N_Q$}\label{sec52}

First we define a $1$-form on $N_Q$. Let $Q'$ be an oriented compact surface with an embedded graph $G'$ such that 
\begin{itemize}
\item[(1)] $Q'$ collapses onto $G'$ and 
\item[(2)] there exists a continuous map $\pr_Q:Q\to Q'$ such that
$\pr_Q|_{S(P)}$ is an embedding of the graph $S(P)$ into $Q'$ with $G'=\pr_Q(S(P))$ and
$\pr_Q|_{Q\setminus S(P)}$ is an orientation-preserving local diffeomorphism.
\end{itemize}
We then fix a projection $\pr:N_Q\to Q'$ such that $\pr|_Q=\pr_Q$.

Next we decompose $Q'$ into rectangles $U^e_1,\ldots,U^e_{n_e}$ corresponding to the images of neighborhoods of the edges $e_1,\ldots,e_{n_e}$ of $P$ and rectangles $U^v_1,\ldots,U^v_{n_v}$ 
corresponding to those of vertices $v_1,\ldots,v_{n_v}$ of $P$ as shown in Figure~\ref{fig4}.

\begin{figure}[htbp]
\begin{center}
\includegraphics[width=10.0cm, bb=130 568 546 713]{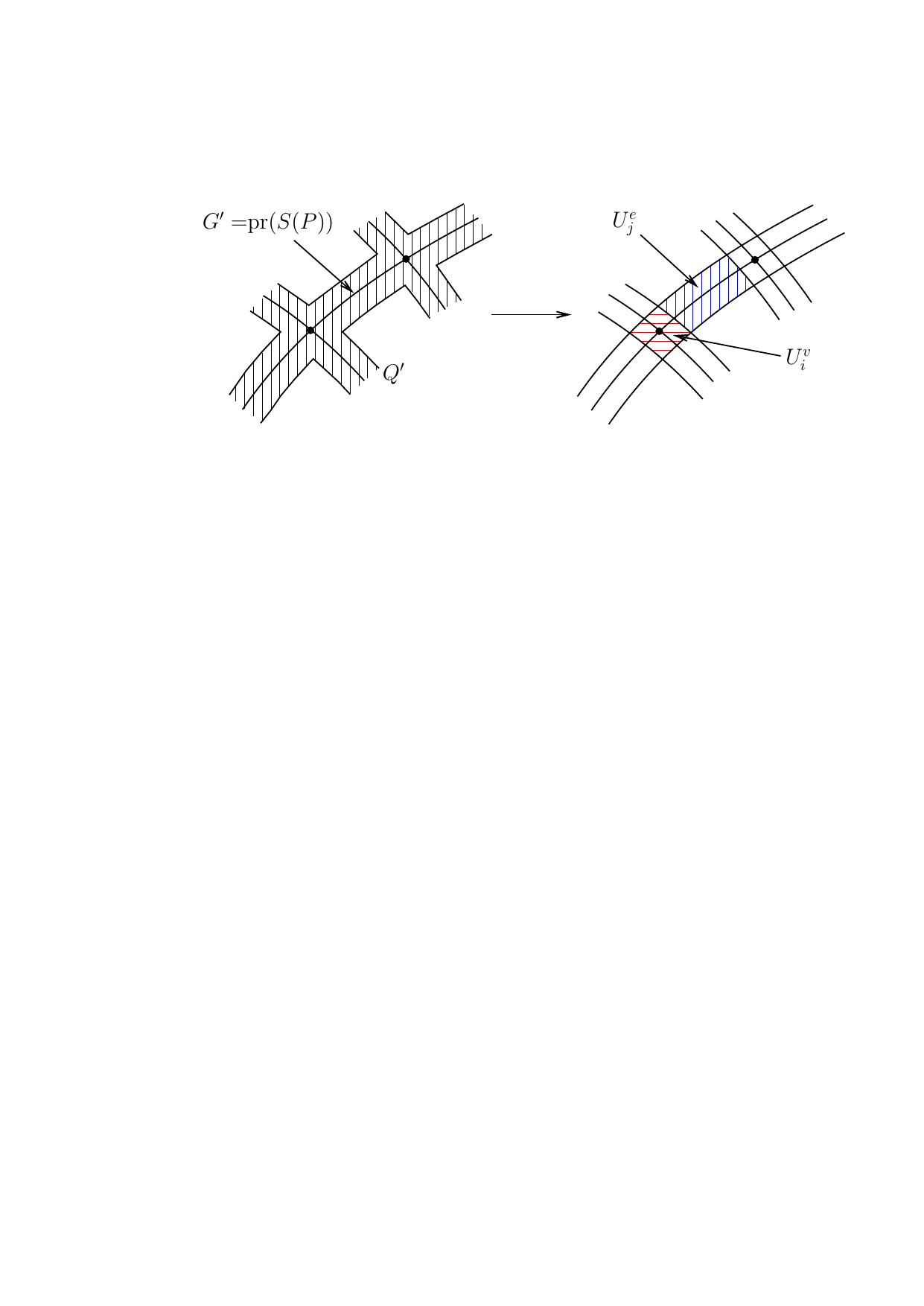}
\caption{Rectangular decomposition of $Q'$.}\label{fig4}
\end{center}
\end{figure}

For each $U^e_i$, the preimage $B^e_i=\pr^{-1}(U^e_i)$ has a shape as shown on the left in Figure~\ref{fig5}. According to the order determined by the flow, we denote the horizontal faces of $B^e_i$ by $H^e_{i,1}$, $H^e_{i,2}$, $H^e_{i,3}$ and $H^e_{i,4}$. 
These faces are indicated on the right in the figure, where 
$H_{i,2}^e$ and $H_{i,4}^e$ are top faces of $B_i^e$ and $H_{i,1}^e$ and $H_{i,3}^e$ are bottom faces of $B_i^e$. Set $\ell_i=\pr(S(P)\cap B_i^e)$.
The complement $U^e_i\setminus\Int\Nbd(\ell_i;U^e_i)$ of an open neighborhood of $\ell_i$ in 
$U_i^e$ consists of two connected components. 
We denote by $U^e_{i,1}$ the component whose preimage by $\pr$ is connected, and by 
$U^e_{i,2}$ the component whose preimage has two connected components.
Set $B^e_{i,j}=\pr^{-1}(U^e_{i,j})$ for $j=1,2$.
We fix coordinates $(x_i^e,y_i^e,t^e_i)$ on $B^e_i$ such that
\begin{itemize}
\item 
$B_{i,2}^e\subset \{0\leq y_i^e\leq 1\}$, $B_{i,1}^e\subset \{2\leq y_i^e\leq 3\}$,
\item 
$H^e_{i,2}\subset \{t_i^e=1\}$, $H^e_{i,3}\subset \{t_i^e=2\}$, $H^e_{i,4}\subset \{t_i^e=3\}$,
\item 
$H^e_{i,1}\subset \{t_i^e=h(y_i^e)\}$, where $h:[0,3]\to [0,2]$ is a monotone increasing smooth function such that $h(y_i^e)=0$ for $0\leq y_i^e\leq 1$ and $h(y_i^e)=2$ for $2\leq y_i^e\leq 3$.
\end{itemize}
We can think that $B^e_i$ is embedded in the cube $[0,L]\times [0,3]\times [0,3]$ with 
the coordinates $(x^e_i, y^e_i, t^e_i)$, where $L$ is a positive real number.
Note that $L$ should be large enough so that $N_P$ can be constructed from the pieces $N_Q$ and $N_{\mathcal R_i}$ for $i=1,\ldots,n$.

\begin{figure}[htbp]
\begin{center}
\includegraphics[width=12.5cm, bb=130 528 557 710]{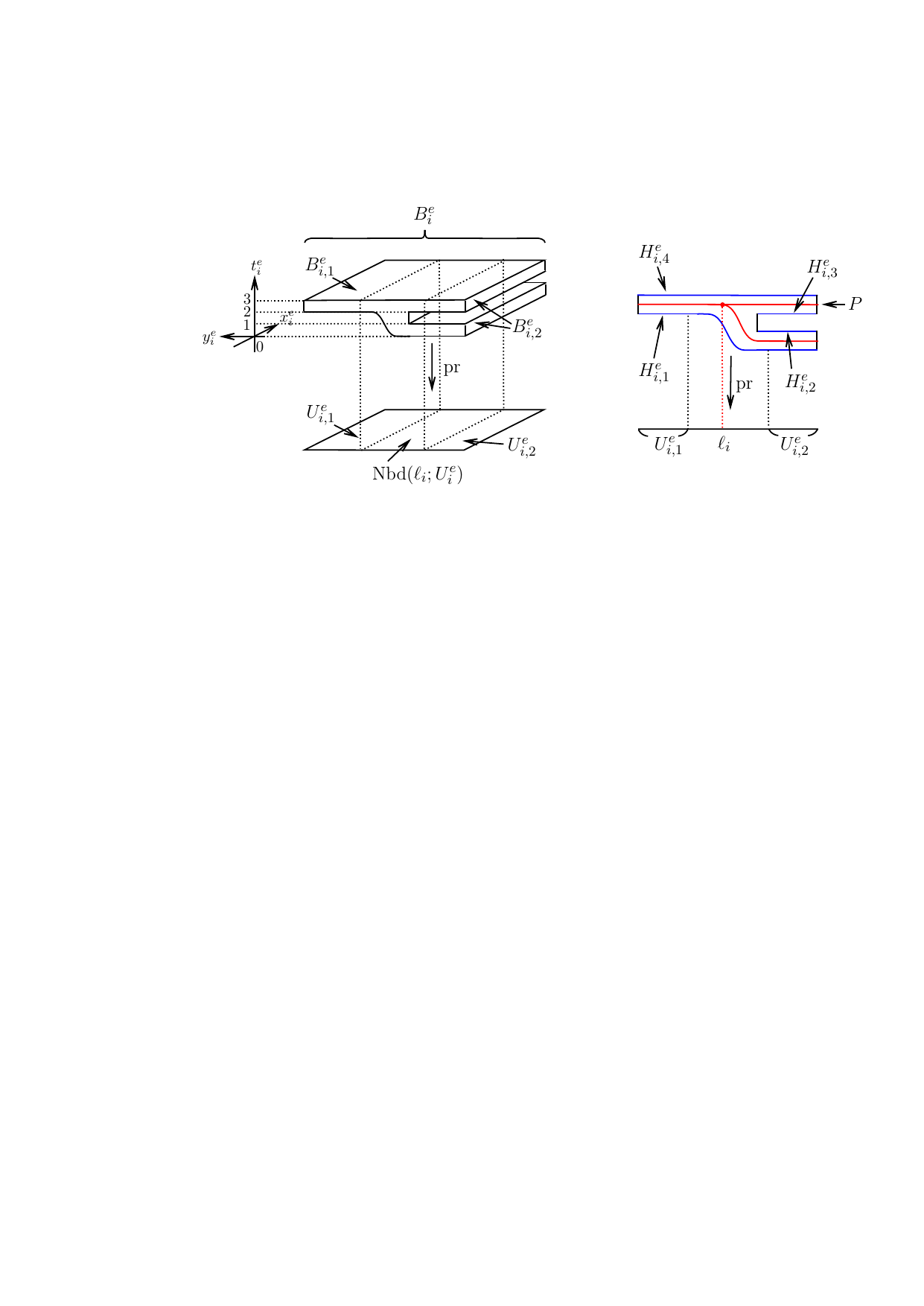}
\caption{The neighborhood $B^e_i$ of a triple line $e_i$ of $P$.}\label{fig5}
\end{center}
\end{figure}

Since $P$ is a positive flow-spine, each vertex is of $\ell$-type. 
For each $U^v_i$, the preimage $B^v_i=\pr^{-1}(U^v_i)$ has a shape as shown on the left in Figure~\ref{fig6}.
According to the order determined by the flow, we denote the horizontal faces of 
$B^v_i$ by $H^v_{i,1}, H^v_{i,2'}, H^v_{i,3'}, H^v_{i,2}, H^v_{i,3}, H^v_{i,4}$.
These faces are indicated on the left in the figure, where $H^v_{i,2}$,  $H^v_{i,2'}$ and  $H^v_{i,4}$ are top faces of  $B^v_i$ drawn in red, and  $H^v_{i,1}$,  $H^v_{i,3}$ and  $H^v_{i,3'}$ are bottom faces of $B_i^v$ drawn in blue.
The image of $S(P)\cap B_i^v$ by $\pr$ consists of two lines $\ell_i$ and $\ell_i'$ intersecting transversely at the image $\pr(v_i)$ of the vertex $v_i$ in $B_i^v$. We set $\ell_i$ to be the line that does not intersect $\pr(H_{i,2}^v)$ and $\ell_i'$ to be the other line,
which does not intersect $\pr(H_{i,2'}^v)$.
We denote by $U^v_{i,1}$ the connected component of the complement $U^v_i\setminus\Int\Nbd(\ell_i;U^v_i)$ of an open neighborhood of $\ell_i$ whose preimage by $\pr$ is connected
and by $U^v_{i,2}$ the component of the complement whose preimage by $\pr$ is not connected. 
Similarly, we denote by $U^v_{i,1'}$ the connected component of the complement $U^v_i\setminus\Int\Nbd(\ell'_i;U^v_i)$ of an open neighborhood of $\ell'_i$ whose preimage by $\pr$ is connected and by $U^v_{i,2'}$ the component of the complement whose preimage by $\pr$ is not connected.
We set $B^v_{i,j}=\pr^{-1}(U^v_{i,j})$ for $j=1,2,1',2'$.

Set coordinates $(x^v_i, y^v_i, t^v_i)$ on $B^v_i$ such that
\begin{itemize}
\item $B_{i,2}^v\subset \{0\leq y_i^v\leq 1\}$, $B_{i,1}^v\subset \{2\leq y_i^v\leq 3\}$,
\item $B_{i,2'}^v\subset \{0\leq x_i^v\leq 1\}$, $B_{i,1'}^v\subset \{2\leq x_i^v\leq 3\}$,
\item $H^v_{i,2}\subset \{t_i^v=3\}$, $H^v_{i,3}\subset \{t_i^v=4\}$, $H^v_{i,4}\subset \{t_i^v=5\}$,
\item $H^v_{i,3'}\subset \{t_i^v=h(y_i^v)+2\}$, $H^v_{i,2'}\subset \{t_i^v=h(y_i^v)+1\}$,
\item $H^v_{i,1}\subset \{t_i^v=\hat h(x_i^v,y_i^v)\}$, where $\hat h:[0,3]\times [0,3]\to[0,4]$ is a smooth function such that $\partial h/\partial x_i^v\geq 0$, $\partial h/\partial y_i^v\geq 0$, $\hat h(x_i^v,y_i^v)=h(y_i^v)$ for $(x_i^v,y_i^v)\in U^v_{i,2'}$,  $\hat h(x_i^v,y_i^v)=h(y_i^v)+2$ for $(x_i^v,y_i^v)\in U^v_{i,1'}$,  $\hat h(x_i^v,y_i^v)=h(x_i^v)$ for $(x_i^v,y_i^v)\in U^v_{i,2}$ and $\hat h(x_i^v,y_i^v)=h(x_i^v)+2$ for $(x_i^v,y_i^v)\in U^v_{i,1}$.
\end{itemize}
We can think that $B^v_i$ is embedded in the cube $[0,3]\times [0,3]\times [0,5]$ with 
the coordinates $(x^v_i, y^v_i, t^v_i)$.

\begin{figure}[htbp]
\begin{center}
\includegraphics[width=14cm, bb=130 534 558 711]{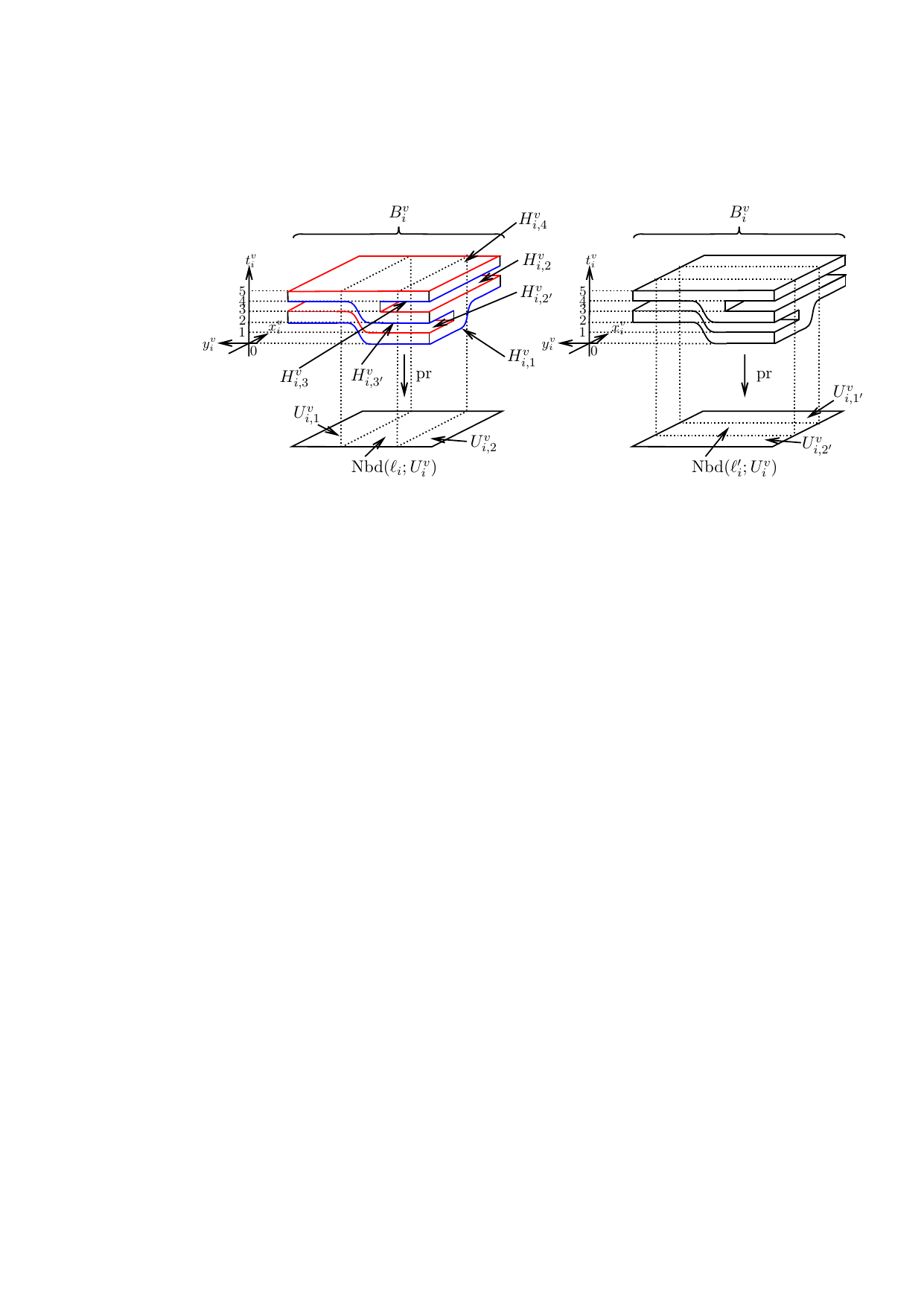}
\caption{The neighborhood $B^v_i$ of a vertex $v_i$ of $P$ of $\ell$-type.}\label{fig6}
\end{center}
\end{figure}

On each $B^k_i$, where $k=e$ or $v$, we define a $1$-form $\eta^k_i$ on $B^k_i$ by $\eta^k_i=dt^k_i$.
We can glue the $1$-forms $\eta^e_i$ for $i=1,\ldots,n_e$ and $\eta^v_j$ for $j=1,\ldots,n_v$ canonically. We denote the $1$-form on $N_Q$ obtained in this way by $\eta_Q$.

\subsection{Construction on $N_P$}

Recall that the $3$-manifold $M$ is obtained from $N_Q$ by attaching $N_{\mathcal R_i}$, $i=1,\ldots,n$, 
where $P\cap \mathcal R_i$ is regarded as $\mathcal R_i\times\{1/2\}\subset N_{\mathcal R_i}$, 
and then filling the boundary of $N_P$ by $N_D$.

Set $N_{\mathcal R_i}=\mathcal R_i\times [0,1]$ and let $t_i$ be the coordinate of $[0,1]$.
We choose the gluing map $g_{\mathcal R_i}$ of $N_{\mathcal R_i}$ to $N_Q$ so that
\begin{itemize}
\item[(e-gl)] the gluing part of $B^e_j$ with $\bigcup_{i=1}^n N_{\mathcal R_i}$ is exactly $B_{j,1}^e\cup B_{j,2}^e$,
\item[(e-01)] $t_j^e\circ g_{\mathcal R_i}=t_i$ on $g^{-1}_{\mathcal R_i}(B_{j,2}^e\cap \{0\leq t_j^e\leq 1\})$,
\item[(e-23)] $t_j^e\circ g_{\mathcal R_i}=t_i+2$ on $g^{-1}_{\mathcal R_i}((B_{j,1}^e\cup B_{j,2}^e)\cap \{2\leq t_j^e\leq 3\})$,
\item[(v-gl)] the gluing part of $B_j^v$ with $\bigcup_{i=1}^n N_{\mathcal R_i}$ is exactly the union of $B_{j,p}^v\cap B_{j,q}^v$, where $p$ and $q$ run over $\{ 1,2,1',2'\}$ with $p\ne q$, $B_{j,2}^v\cap\{4\leq t_j^v\leq 5\}$ and 
$W_{j}^v$ with rounding the corners along $B_j^v\cap \pr^{-1}(\partial \Nbd(\ell_j;U_j^v) \cap \partial \Nbd(\ell_j';U_j^v))$ smoothly, where
\[
W_{j}^v=B_{j,2'}^v\cap \pr^{-1}(\Nbd(\ell_j;U_j^v))\cap \{h(y_j^v)\leq t_j^v \leq h(y_j^v)+1\},
\]
see the right in Figure~\ref{fig7}.
\item[(v-01)] $t_j^v\circ g_{\mathcal R_i}=t_i$ on $g^{-1}_{\mathcal R_i}((B_{j,2}^v\cap B_{j,2'}^v)\cap \{0\leq t_j^v\leq 1\})$,
\item[(v-23)] $t_j^v\circ g_{\mathcal R_i}=t_i+2$ on $g^{-1}_{\mathcal R_i}(((B_{j,1}^v\cap B_{j,2'}^v)\cup (B_{j,2}^v\cap B_{j,1'}^v)\cup  (B_{j,2}^v\cap B_{j,2'}^v))\cap \{2\leq t_j^v\leq 3\})$,
\item[(v-45)] $t_j^v\circ g_{\mathcal R_i}=t_i+4$ on $g^{-1}_{\mathcal R_i}(((B_{j,1}^v\cap B_{j,1'}^v)\cup (B_{j,1}^v\cap B_{j,2'}^v)\cup B_{j,2}^v)\cap \{4\leq t_j^v\leq 5\})$,
\item[(v-W)] $t_j^v\circ g_{\mathcal R_i}=t_i+h(y_j^v)$ on $W_{i,j}=g^{-1}_{\mathcal R_i}(W_{j}^v)$.
\end{itemize}
In the above notation, $t_j^k$,  $k=e$ or $v$, is regarded as the coordinate function whose image is the 
$t_j^k$-coordinate.

Let $\mathcal R_i'$ be a collar neighborhood of $\mathcal R_i$
such that $\mathcal R_i'\times [0,1]=g^{-1}_{\mathcal R_i}(N_Q)$, and set $N_{\mathcal R_i'}=\mathcal R_i'\times [0,1]\subset \mathcal R_i\times[0,1]=N_{\mathcal R_i}$.
Choose coordinates $(r_i,\theta_i)$ on $\mathcal R_i'$ so that $\{r_i=0\}=\partial \mathcal R_i'\setminus \partial \mathcal R_i$, $\{r_i=1\}=\partial \mathcal R_i$ and the orientation of $(r_i,\theta_i)$ coincides with that of $\mathcal R_i$,
see the left in Figure~\ref{fig7}.

\begin{figure}[htbp]
\begin{center}
\includegraphics[width=11cm, bb=130 616 472 711]{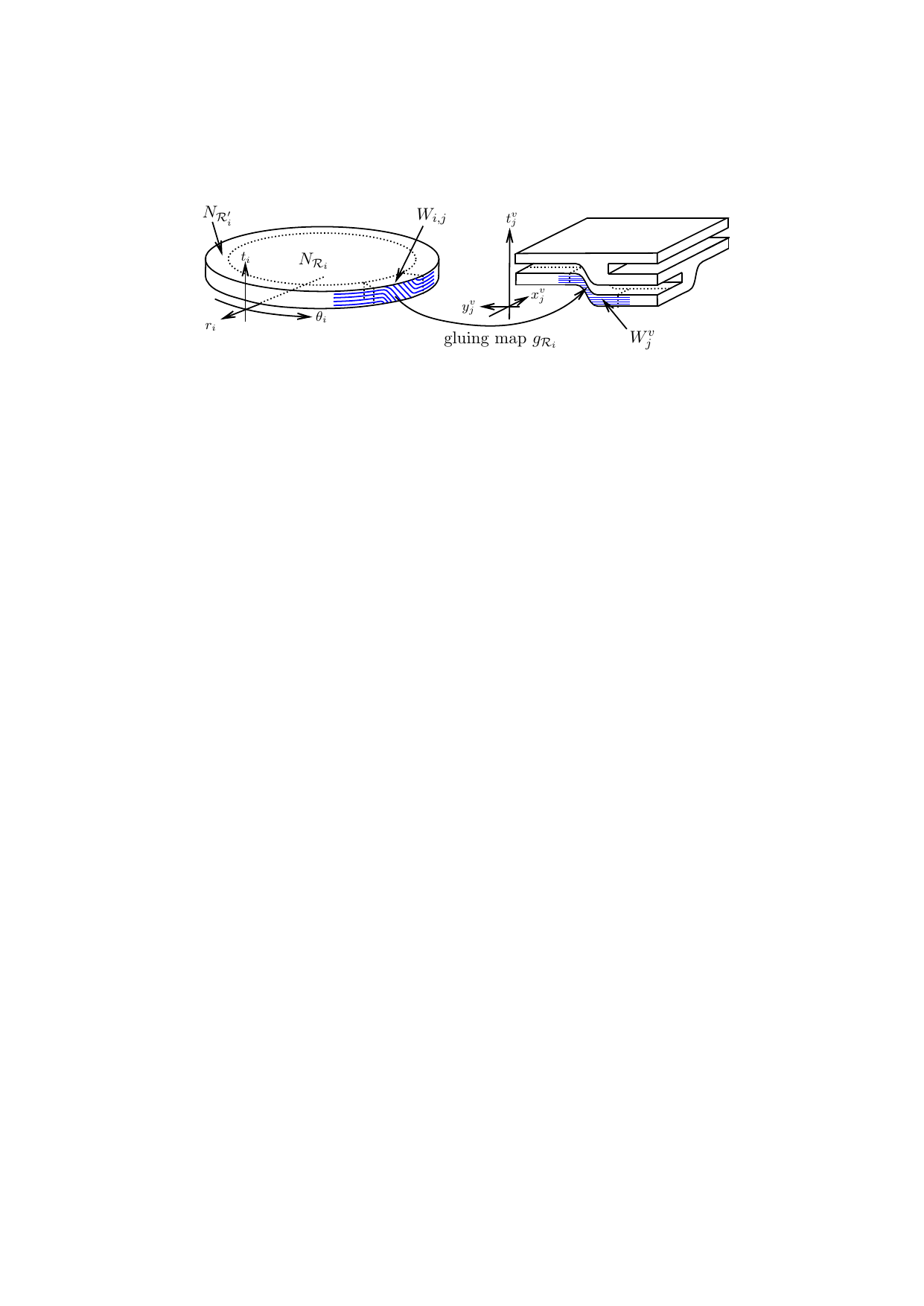}
\caption{Characteristic foliation of $\ker\eta_Q$ on $\partial \mathcal R_i\times [0,1]$.}\label{fig7}
\end{center}
\end{figure}

\begin{remark}
As described on the left in Figure~\ref{fig7},
the characteristic foliation of $\ker\eta_Q$ has negative slope as the standard contact structure on $\Real^3$.
Intuitively, this is the reason why we have the inequality $\eta\land d\eta\geq 0$ in Lemma~\ref{lemma35}. 
Remark that the figure on the right in  Figure~\ref{fig7} is a neighborhood of an $\ell$-type vertex and we cannot expect the same inequality for an $r$-type vertex. 
\end{remark}

For each $W_{i,j}$, the gluing map $g_{\mathcal R_i}|_{W_{i,j}}:W_{i,j}\to B_j^v$ is given as
\begin{equation}\label{eq30}
   (r_i,\theta_i,t_i)\mapsto (r_i,  c_i(\theta_i-c_{i,j}), t_i+h_{i,j}(\theta_i)),
\end{equation}
where $c_i$ and $c_{i,j}$ are real numbers chosen so that $c_i(\theta_i-c_{i,j})$ coincides with the $y_j^v$-coordinate on $B_j^v$, and $h_{i,j}$ is the smooth function given by $h_{i,j}(\theta_i)=h(c_i(\theta_i-c_{i,j}))$.
Note that the function $h_{i,j}$ is monotone increasing.

The gluing map $g_{\mathcal R_i}$ induces a $1$-form $\eta_{\mathcal R'_i}$ on $N_{\mathcal R'_i}$ as $\eta_{\mathcal R'_i}=g_{\mathcal R_i}^*\eta_Q$.
Due to the choice of $g_{\mathcal R_i}$, we have $\eta_{\mathcal R'_i}=dt_i$ on $N_{\mathcal R'_i}\setminus \bigcup_j W_{i,j}$,
where $j$ runs over the indices of $B_j^v$ adjacent to $\mathcal R_i$.

We define a $1$-form $\eta_{\mathcal R_i}$ on $N_{\mathcal R_i}$ as
\begin{equation}\label{eq31}
   \eta_{\mathcal R_i}=(1-\sigma(r_i))dt_i+\sigma(r_i)\eta_{\mathcal R'_i},
\end{equation}
where $\sigma(r_i)$ is a monotone increasing smooth function such that $\sigma(\varepsilon)=0$ and 
$\sigma(1-\varepsilon)=1$ for any sufficiently small $\varepsilon\geq 0$.

\begin{lemma}\label{lemmaB}
\[
    \eta_{\mathcal R_i}=
\begin{cases}
dt_i &  \text{on\;\,}N_{\mathcal R_i}\setminus\bigcup_j W_{i,j} \\
dt_i+\sigma(r_i)\frac{dh_{i,j}}{d\theta_i}(\theta_i)d\theta_i & \text{on\;\,} W_{i,j}.
\end{cases}
\]
\end{lemma}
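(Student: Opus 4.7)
The plan is a direct computation using the explicit gluing map formula in \eqref{eq30} and the definition of $\eta_{\mathcal R_i}$ in \eqref{eq31}. The whole argument reduces to pulling back $\eta_Q$ by $g_{\mathcal R_i}$ and then taking the convex combination with $dt_i$ controlled by $\sigma(r_i)$.

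First I would handle the region $N_{\mathcal R_i}\setminus\bigcup_j W_{i,j}$. Outside the collar $N_{\mathcal R'_i}$ the function $\sigma(r_i)$ vanishes, so the definition \eqref{eq31} gives $\eta_{\mathcal R_i}=dt_i$ immediately. Inside $N_{\mathcal R'_i}\setminus\bigcup_j W_{i,j}$, I would invoke the paragraph following \eqref{eq30}, which records that $\eta_{\mathcal R'_i}=dt_i$ there as a consequence of the conditions (e-01), (e-23), (v-01), (v-23), (v-45) imposed on $g_{\mathcal R_i}$; in each of those regions the $t$-coordinate on the relevant box $B_j^k$ pulls back to $t_i$ (up to an additive constant), so $g_{\mathcal R_i}^*(dt_j^k)=dt_i$. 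Substituting $\eta_{\mathcal R'_i}=dt_i$ into \eqref{eq31} yields $\eta_{\mathcal R_i}=(1-\sigma(r_i))dt_i+\sigma(r_i)dt_i=dt_i$, as required.

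Next I would treat the piece $W_{i,j}$. By the choice of $g_{\mathcal R_i}|_{W_{i,j}}$ in \eqref{eq30}, the $t_j^v$-coordinate pulls back to $t_i+h_{i,j}(\theta_i)$, so
\begin{equation*}
\eta_{\mathcal R'_i}|_{W_{i,j}}=g_{\mathcal R_i}^*(dt_j^v)=d\bigl(t_i+h_{i,j}(\theta_i)\bigr)=dt_i+\frac{dh_{i,j}}{d\theta_i}(\theta_i)\,d\theta_i.
\end{equation*}
Plugging this into \eqref{eq31} gives
\begin{equation*}
\eta_{\mathcal R_i}|_{W_{i,j}}=(1-\sigma(r_i))dt_i+\sigma(r_i)\Bigl(dt_i+\tfrac{dh_{i,j}}{d\theta_i}(\theta_i)\,d\theta_i\Bigr)=dt_i+\sigma(r_i)\tfrac{dh_{i,j}}{d\theta_i}(\theta_i)\,d\theta_i,
\end{equation*}
which is exactly the second case in the lemma.

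There is no real obstacle here: the statement is essentially just an unpacking of the definitions of $\eta_Q$, the gluing map $g_{\mathcal R_i}$, and the interpolation formula \eqref{eq31}. The only point that requires a little care is checking that on every component of $N_{\mathcal R'_i}\setminus\bigcup_j W_{i,j}$ the coordinate $t_j^k$ on the adjacent box pulls back to $t_i$ plus a constant, but this is precisely the content of the conditions (e-01)--(v-45) imposed on the gluing maps in the previous subsection, so no further work is needed.
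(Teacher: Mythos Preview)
Your proof is correct and follows essentially the same approach as the paper's own proof: pull back $dt_j^k$ via the gluing map $g_{\mathcal R_i}$ (using \eqref{eq30} on $W_{i,j}$ and the conditions (e-01)--(v-45) elsewhere) to compute $\eta_{\mathcal R_i'}$, then substitute into the interpolation formula \eqref{eq31}. Your version is in fact slightly more careful than the paper's, since you explicitly separate the case outside the collar $N_{\mathcal R_i'}$ where $\sigma(r_i)=0$.
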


\begin{proof}
On each $W_{i,j}$, by~\eqref{eq30}, we have $t_j^v=t_i+h_{i,j}(\theta_i)$.
Hence $\eta_{\mathcal R_i'}=g_{\mathcal R_i}^* dt_j^v=dt_i+\frac{dh_{i,j}}{d\theta_i}(\theta_i)d\theta_i$.
By~\eqref{eq31}, we have $\eta_{\mathcal R_i'}=dt_i+\sigma(r_i)\frac{dh_{i,j}}{d\theta_i}(\theta_i)d\theta_i$.
On $N_{\mathcal R_i}\setminus\bigcup_j W_{i,j}$, since
$\eta_{\mathcal R_i'}=g_{\mathcal R_i}^*dt_j^k=dt_i$ for $k=e$ and $v$, we have $\eta_{\mathcal R_i}=dt_i$ by~\eqref{eq31}. 
\end{proof}

Gluing $N_Q$ with $1$-form $\eta_Q$ and $N_{\mathcal R_i}$ with the $1$-form $\eta_{\mathcal R_i}$ for $i=1,\ldots,n$, we obtain a $1$-form $\eta_P$ on the whole $N_P$.

\begin{lemma}\label{lemma31}
\[
    d\eta_P=
\begin{cases}
0 &  \text{on\;\,}N_P\setminus \bigcup_{i,j} W_{i,j} \\
\frac{d\sigma}{dr_i}(r_i)\frac{dh_{i,j}}{d\theta_i}(\theta_i)dr_i\land d\theta_i & \text{on\;\,} W_{i,j}.
\end{cases}
\]
\end{lemma}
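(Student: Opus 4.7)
The plan is to compute $d\eta_P$ directly on each piece of the decomposition $N_P = N_Q \cup \bigcup_{i=1}^n N_{\mathcal{R}_i}$, using the explicit local descriptions of $\eta_Q$ and $\eta_{\mathcal{R}_i}$. The essential input is Lemma~\ref{lemmaB}, which already isolates where $\eta_{\mathcal{R}_i}$ fails to equal $dt_i$; the rest is a one-line exterior derivative calculation.

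First I would handle $N_Q$. By the construction in Section~\ref{sec52}, the form $\eta_Q$ is locally given by $\eta_i^k = dt_i^k$ on each block $B_i^k$ ($k\in\{e,v\}$), which is manifestly closed. Hence $d\eta_Q = 0$ throughout $N_Q$. Next, on $N_{\mathcal{R}_i}$, I would split into the two cases supplied by Lemma~\ref{lemmaB}. Off $\bigcup_j W_{i,j}$, we have $\eta_{\mathcal{R}_i}=dt_i$, so trivially $d\eta_{\mathcal{R}_i}=0$. On $W_{i,j}$, Lemma~\ref{lemmaB} gives
\[
\eta_{\mathcal{R}_i} = dt_i + \sigma(r_i)\,\tfrac{dh_{i,j}}{d\theta_i}(\theta_i)\, d\theta_i,
\]
and a direct calculation yields
\[
d\eta_{\mathcal{R}_i} = \tfrac{d\sigma}{dr_i}(r_i)\,\tfrac{dh_{i,j}}{d\theta_i}(\theta_i)\, dr_i\wedge d\theta_i + \sigma(r_i)\,\tfrac{d^2 h_{i,j}}{d\theta_i^2}(\theta_i)\, d\theta_i\wedge d\theta_i.
\]
The second term vanishes because $d\theta_i\wedge d\theta_i = 0$, producing the formula asserted by the lemma.

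Finally, I would verify that these local computations glue to a single formula on $N_P$. On the overlap region $N_{\mathcal{R}_i'}\cap g_{\mathcal{R}_i}^{-1}(N_Q)$ the definition $\eta_{\mathcal{R}_i'}=g_{\mathcal{R}_i}^{*}\eta_Q$ together with $\sigma(r_i)=1$ near $r_i=1$ forces $\eta_{\mathcal{R}_i}=g_{\mathcal{R}_i}^{*}\eta_Q$, so $\eta_P$ is smoothly defined and $d$ commutes with the gluing. Moreover, the two sides of the answer are consistent across the seam: on the $N_Q$ side we computed $d\eta_Q=0$, while on the $N_{\mathcal{R}_i}$ side the cutoff $\sigma$ is flat at $1$ near $r_i = 1-\varepsilon$, so $\tfrac{d\sigma}{dr_i}=0$ there and the $W_{i,j}$-formula also evaluates to $0$ near the gluing locus. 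Similarly, conditions (e-01), (e-23), (v-01), (v-23), (v-45) guarantee $g_{\mathcal{R}_i}^{*}dt_j^k = dt_i$ outside $\bigcup_j W_{i,j}$, so the off-$W_{i,j}$ identity $d\eta_P=0$ is consistent with $d\eta_Q=0$.

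I do not anticipate any real obstacle: once Lemma~\ref{lemmaB} is in hand, the whole statement reduces to the single exterior derivative above plus the bookkeeping that the overlap formulas agree. The only place one has to be a bit careful is distinguishing $W_{i,j}$ viewed inside $N_{\mathcal{R}_i}$ (where the coordinates $(r_i,\theta_i,t_i)$ are defined and the formula is written) from its image $W_j^v$ inside $N_Q$ (where $\eta_Q$ is closed); the compatibility check above is exactly what reconciles these two viewpoints.
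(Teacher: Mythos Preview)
Your proof is correct and follows essentially the same approach as the paper's: both invoke Lemma~\ref{lemmaB} to see that $\eta_P$ is locally $dt_j^k$ or $dt_i$ away from $W_{i,j}$ (hence closed there) and then differentiate the explicit formula on $W_{i,j}$. Your added paragraph on gluing compatibility is extra care that the paper leaves implicit; the paper's proof is the one-sentence version of your argument.
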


\begin{proof}
Since $\eta_P=dt_j^e$ on $\pr^{-1}(\Nbd(\ell_j;U_j^e))$, 
$\eta_P=dt_j^v$ on  $\pr^{-1}(\Nbd(\ell_j;U_j^v)\cup \Nbd(\ell_j';U_j^v))\setminus W_j^v$
and $\eta_P=dt_i$ on $N_{\mathcal R_i}\setminus \bigcup_j W_{i,j}$ by Lemma~\ref{lemmaB}, we have $d\eta_P=0$ on $N_P\setminus \bigcup_{i,j} W_{i,j}$.
On each $W_{i,j}$, the $2$-form in the assertion follows from Lemma~\ref{lemmaB}.
\end{proof}

\begin{lemma}\label{lemma32}
\begin{itemize}
\item[(1)] $\eta_P\land d\eta_P=0$ on $N_P\setminus \bigcup_{i,j} W_{i,j}$.
\item[(2)] $\eta_P\land d\eta_P\geq 0$ on $W_{i,j}$.
\end{itemize}
\end{lemma}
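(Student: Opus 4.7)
The proof splits naturally along the decomposition provided by Lemma~\ref{lemma31}. On $N_P\setminus\bigcup_{i,j}W_{i,j}$, we have $d\eta_P=0$, so $\eta_P\wedge d\eta_P=0$ and part~(1) follows at once.

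For part~(2), I would fix one piece $W_{i,j}$ and substitute the explicit formulas from Lemmas~\ref{lemmaB} and~\ref{lemma31}. Writing primes for ordinary derivatives of the single-variable functions $\sigma$ and $h_{i,j}$, we have
\[
\eta_P = dt_i + \sigma(r_i)\, h'_{i,j}(\theta_i)\, d\theta_i, \qquad d\eta_P = \sigma'(r_i)\, h'_{i,j}(\theta_i)\, dr_i\wedge d\theta_i.
\]
Taking the wedge product, the contribution of $\sigma(r_i)h'_{i,j}(\theta_i)\, d\theta_i$ dies against $dr_i\wedge d\theta_i$, leaving the single term
\[
\eta_P\wedge d\eta_P = \sigma'(r_i)\, h'_{i,j}(\theta_i)\, dt_i\wedge dr_i\wedge d\theta_i.
\]

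To conclude $\eta_P\wedge d\eta_P\geq 0$ on $W_{i,j}$, I would invoke three facts: $\sigma'(r_i)\geq 0$ by the monotonicity of $\sigma$; $h'_{i,j}(\theta_i)\geq 0$ by the remark following~\eqref{eq30} that $h_{i,j}$ is monotone increasing; and the orientation identity $dt_i\wedge dr_i\wedge d\theta_i>0$ in the orientation of $M$. The third point is the only nontrivial one, and is really the main obstacle of the proof. It should follow from the bookkeeping that $(r_i,\theta_i)$ was chosen to orient $\mathcal R_i$ positively, that the flow carried by $P$ is positively transverse to $\mathcal R_i$, and that $t_i$ parametrizes the flow direction in $N_{\mathcal R_i}=\mathcal R_i\times[0,1]$; together these conventions force $dt_i\wedge dr_i\wedge d\theta_i$ to agree with the orientation of $M$, since the contraction of this $3$-form with the flow direction recovers the positive area form $dr_i\wedge d\theta_i$ on $\mathcal R_i$.

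Thus the whole argument reduces to a one-line wedge product computation plus a sign check. The only real work is verifying that the coordinate conventions set up in Section~5 are consistent with the orientation of $M$—a tedious but straightforward matter that can also be traced through the gluing map $g_{\mathcal R_i}$ back to the coordinates $(x^v_j, y^v_j, t^v_j)$ on $B^v_j$ established in Section~\ref{sec52}. No deeper ideas beyond the explicit formulas of the preceding two lemmas appear to be needed.
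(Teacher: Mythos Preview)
Your proof is correct and follows essentially the same approach as the paper: part~(1) is immediate from $d\eta_P=0$ via Lemma~\ref{lemma31}, and part~(2) is the same one-line wedge computation using Lemmas~\ref{lemmaB} and~\ref{lemma31} together with the monotonicity of $\sigma$ and $h_{i,j}$. The paper simply writes $dr_i\wedge d\theta_i\wedge dt_i\geq 0$ without your extended orientation discussion, taking it as implicit from the coordinate conventions of Section~\ref{sec52}.
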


\begin{proof}
The assertion~(1) follows from Lemma~\ref{lemma31}. We prove the assertion~(2).
The function $h_{i,j}$ is monotone increasing. We also have $\frac{d\sigma}{dr_i}(r_i)\geq 0$. Hence, by Lemmas~\ref{lemmaB} and~\ref{lemma31}, 
\[
\begin{split}
\eta_{\mathcal R_i}\land d\eta_{\mathcal R_i}
=&\left(dt_i+\sigma(r_i)\frac{dh_{i,j}}{d\theta_i}(\theta_i)d\theta_i\right)\land
\left(\frac{d\sigma}{dr_i}(r_i)\frac{dh_{i,j}}{d\theta_i}(\theta_i)dr_i\land d\theta_i\right) \\
=&\frac{d\sigma}{dr_i}(r_i)\frac{dh_{i,j}}{d\theta_i}(\theta_i)dr_i\land d\theta_i\land dt_i\geq 0.
\end{split}
\]
\end{proof}

\subsection{Construction on the whole $M$}

Finally we extend the $1$-form $\eta_P$ on $N_P$ to the whole $M$.
For convenience, we set $N_D=D^2\times [-\varepsilon, 1+\varepsilon]$ for a sufficiently small $\varepsilon>0$, instead of $D^2\times [0,1]$.
Set $E=D^2\times [-\varepsilon,0]$ and $F=D^2\times [1,1+\varepsilon]$,
and denote the restrictions of $g_D$ to $E$ and $F$ by $g_E$ and $g_F$, respectively.
Set $D_\partial$ to be the union of the connected components of $B_j^k\cap g_D(N_D)$ that intersect the vertical faces of $\partial B_j^k$ corresponding to the $E$-cycle, where $B_j^k$ runs over the indices $j=1,\ldots, n_e$ for $k=e$ and $j=1,\ldots, n_v$ for $k=v$.
See the left in Figure~\ref{fig7-1}.

\begin{figure}[htbp]
\begin{center}
\includegraphics[width=15cm, bb=129 561 559 710]{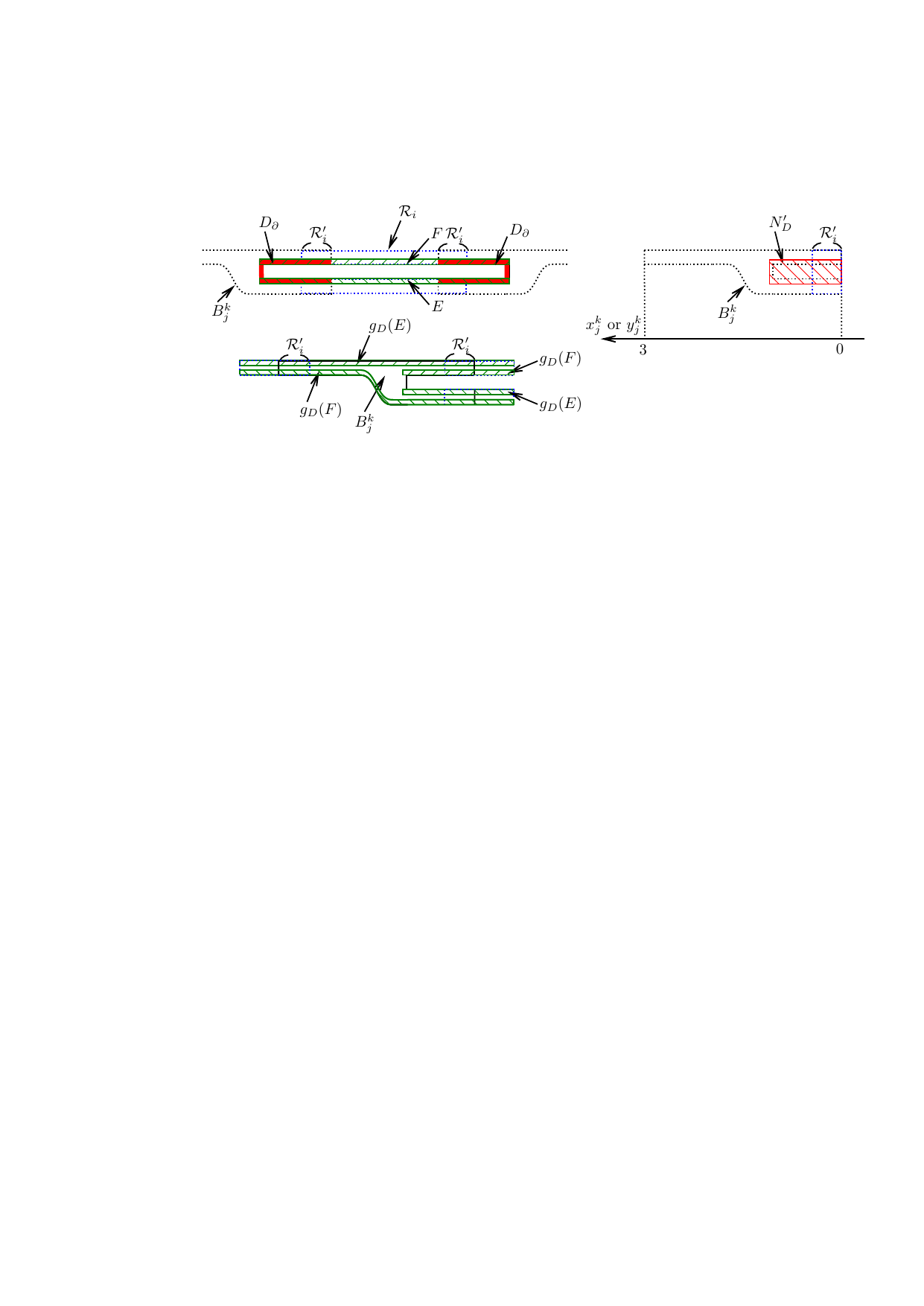}
\caption{Positions of $E$, $F$, $D_\partial$ and $N_D'$.}\label{fig7-1}
\end{center}
\end{figure}

We choose the coordinates $(u,v,w)$ of $N_D=D^2\times [-\varepsilon, 1+\varepsilon]$ 
so that they satisfy the following:
\begin{itemize}
\item[(B-D')] Set $N_D'=\{(u,v,w)\in N_D\mid (u,v,0)\in g_D^{-1}(D_\partial)\}$.
We think of $N_D'$ as being embedded in the union of the cubes $[0,L]\times [0,3]\times [0,3]$ containing $B_j^e$ and the cubes $[0,3]^2\times [0,5]$ containing $B_j^v$ so that
\[
t_j^k\circ g_D(u,v,w)=
\begin{cases}
w+1 & k=e\text{\;\,and\;\,}g_D(u,v,0)\in B_j^e \\
w+3 & k=v\text{\;\,and\;\,}g_D(u,v,0)\in H_{j,2}^v\subset B_j^v \\
w+t_j^v\circ g_D(u,v,0) & k=v\text{\;\,and\;\,}g_D(u,v,0)\in H_{j,2'}^v\subset B_j^v
\end{cases}
\]
and the first two coordinates of $B_j^k$ do not depend on the coordinate $w$.
See the right in Figure~\ref{fig7-1}.
Note that $t_j^v\circ g_D(u,v,0)=h_{i,j}(\theta_i)+1$ on the connected component of $N_D'\cap g^{-1}_D(B_j^v)$ containing $g_D^{-1}(H_{j,2'}^v)$,
where $i$ is the index $i$ of  $W_{i,j}$ and
$(r_i,\theta_i,t_i)$ are the coordinates of $g_D^{-1}(B_j^v)\cap N_{\mathcal R_i'}$.
\item[(R-E)] The coordinates $(u,v,w)$ on $E\cap  g^{-1}_D(N_{\mathcal R_i})$ are chosen so that
\[
t_i\circ g_D(u,v,w)=w+1
\]
and the first two coordinates of $N_{\mathcal R_i}=\mathcal R_i\times [0,1]$ do not depend on $w$.
\item[(R-F)] The coordinates $(u,v,w)$ on $F\cap  g^{-1}_D(N_{\mathcal R_i})$ are chosen so that 
\[
t_i\circ g_D(u,v,w)=w-1
\]
and the first two coordinates of $N_{\mathcal R_i}=\mathcal R_i\times [0,1]$ do not depend on $w$.
\item[(B-E)] The coordinates $(u,v,w)$ on the connected component of $E\cap g^{-1}_D(B_j^k)$ containing $g^{-1}_D(H_{j,4}^k)$ are chosen so that
\[
t_j^k\circ g_D(u,v,w)=w+c,
\]
where $c=3$ if $k=e$ and $c=5$ if $k=v$, and the first two coordinates of $B_j^k$ do not depend on $w$.
\item[(B-F)] The coordinates $(u,v,w)$ on the connected component of $F\cap g^{-1}_D(B_j^k)$ containing $g^{-1}_D(H_{j,1}^k)$ are chosen so that
\[
t_j^k\circ g_D(u,v,w)=w+t_j^k\circ g_D(u,v,0)
\]
and the first two coordinates of $B_j^k$ do not depend on $w$.
\end{itemize}

Due to the choice of these coordinates, we see that each of the $1$-forms $g_E^*\eta_P$ on $E$ and $g_F^*\eta_P$ on $F$ is invariant under translation to the $w$-coordinate,
which allows us to extend these forms to the whole $N_D$ canonically.
Let $\eta_E$ and $\eta_F$ denote the $1$-forms on $N_D$ obtained by extending $g_E^*\eta_P$ on $E$ and $g_F^*\eta_P$ on $F$ to $N_D$, respectively.
We then define the $1$-form $\eta_D$ on $N_D$ as
\begin{equation}\label{eq33}
   \eta_D=(1-\tau(w))\eta_E+\tau(w)\eta_F,
\end{equation}
where $\tau:[-\varepsilon,1+\varepsilon]\to [0,1]$ is a monotone increasing smooth function such that 
$\tau(w)=0$ for $-\varepsilon\leq w\leq 0$ and $\tau(w)=1$ for $1\leq w\leq 1+\varepsilon$.

By the construction, we have $g_D^*\eta_P=\eta_D$ on $E\cup F$.
Thus, for gluing the $1$-forms $\eta_D$ on $N_D$ and $\eta_P$ on $N_P$, 
it is enough to show their coincidence
on $N_D''=\{(u,v,w)\in N_D\mid g_D(u,v,1/2)\in N_Q\}$, see Figure~\ref{fig7-4}.

\begin{figure}[htbp]
\begin{center}
\includegraphics[width=8.5cm, bb=178 633 416 710]{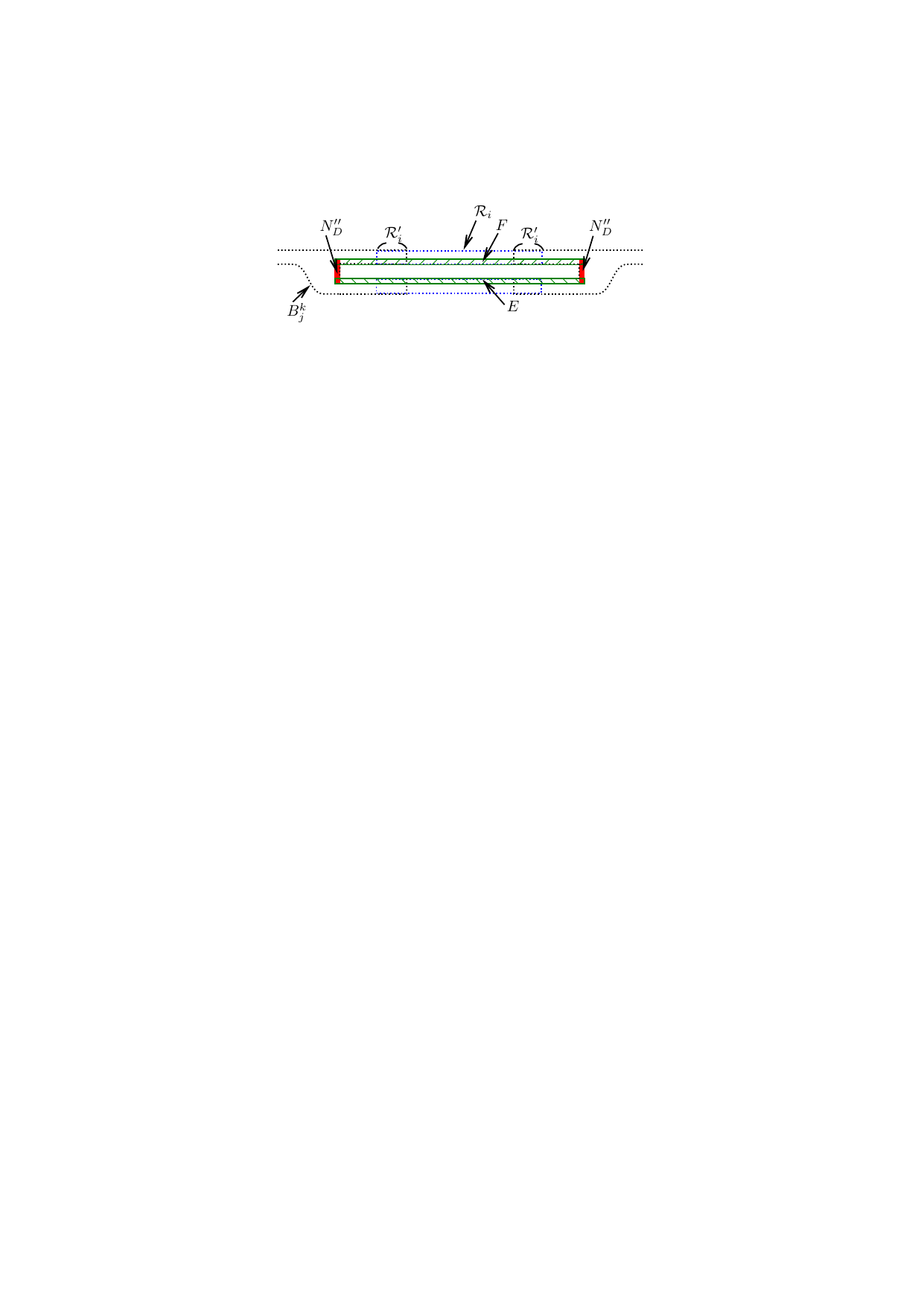}
\caption{The set $N_D''$.}\label{fig7-4}
\end{center}
\end{figure}

\begin{lemma}
$g_D^*\eta_P=\eta_D$ on $N_D''$.
\end{lemma}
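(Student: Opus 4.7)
The strategy is to verify, by a direct computation in the coordinates $(u,v,w)$ introduced on $N_D$, that both $g_D^* \eta_P$ and $\eta_D$ reduce on $N_D''$ to the same $w$-invariant $1$-form of the shape $dw + d\phi(u,v)$ for a function $\phi$ depending only on $(u,v)$. Once this is established, the lemma follows immediately since the convex combination in the definition of $\eta_D$ preserves such a form.

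First, I would pull back $\eta_P$ directly. For $(u,v,w)\in N_D''$, the image $g_D(u,v,w)$ lies in some block $B_j^k\subset N_Q$. The coordinate condition (B-D') forces the map $g_D$ to take the form
\[
(u,v,w)\mapsto (X(u,v),\, Y(u,v),\, w+\phi(u,v))
\]
in the coordinates $(x_j^k,y_j^k,t_j^k)$ on $B_j^k$, where $\phi$ is either constant (equal to $1$ for $k=e$, or to $3$ for $k=v$ with $g_D(u,v,0)\in H_{j,2}^v$) or equal to $t_j^v\circ g_D(u,v,0)$ in the remaining case of (B-D'). Since $\eta_P$ equals $dt_j^k$ on $B_j^k$, we obtain
\[
g_D^* \eta_P=d(w+\phi(u,v))=dw+d\phi(u,v),
\]
which is independent of $w$.

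Next, I would analyze $\eta_E$ and $\eta_F$ on $N_D''$. The pullbacks $g_E^*\eta_P$ on $E$ and $g_F^*\eta_P$ on $F$ have analogous $w$-linear forms dictated by (B-E) and (B-F); the additive terms in these formulas differ from the one in (B-D') only by constants in $(u,v)$. For instance, (B-E) replaces $\phi$ by the explicit constant $3$ or $5$, while (B-F) uses exactly $t_j^k\circ g_D(u,v,0)$, which coincides with the value of $\phi(u,v)$ obtained from (B-D') at $w=0$. Because $d$ annihilates additive constants, we have
\[
g_E^*\eta_P = g_F^*\eta_P = dw+d\phi(u,v)
\]
on the parts of $E$ and $F$ meeting $N_D''$. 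These are already $w$-invariant, so their invariant extensions yield $\eta_E=\eta_F=dw+d\phi$ on all of $N_D''$. Combining the two steps, on $N_D''$ we obtain
\[
\eta_D=(1-\tau(w))(dw+d\phi)+\tau(w)(dw+d\phi)=dw+d\phi=g_D^*\eta_P,
\]
which is the desired identity.

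The principal technical obstacle is the case analysis required in the second step: one must verify the matching of the $(u,v)$-dependent additive term across (B-D'), (B-E), and (B-F) in each of the possible configurations of faces of $B_j^k$ that a fiber $\{(u,v)\}\times[-\varepsilon,1+\varepsilon]$ can traverse, and for both block types $k=e,v$. The coordinate conventions of the preceding subsection were chosen precisely so that the additive terms across the three regions differ only by constants (hence have the same differential), so the argument is largely a bookkeeping exercise rather than a substantive new computation.
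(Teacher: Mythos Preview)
Your overall strategy is correct and matches the paper's: show that on $N_D''$ the three forms $g_D^*\eta_P$, $\eta_E$, and $\eta_F$ all coincide (being $w$-invariant and equal to $dw+d\phi(u,v)$), whence the convex combination $\eta_D$ equals them too.

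There is, however, a misidentification in your second step. On $E\cap N_D''$ and $F\cap N_D''$ the governing coordinate condition is still (B-D'), \emph{not} (B-E) or (B-F). Conditions (B-E) and (B-F) apply to the components of $E\cap g_D^{-1}(B_j^k)$ and $F\cap g_D^{-1}(B_j^k)$ containing the preimages of the horizontal faces $H_{j,4}^k$ and $H_{j,1}^k$; those components lie over the interior of $D^2$, not over the annular region near $\partial D^2$ that makes up $N_D''$. On $N_D''$ the image $g_D(u,v,w)$ sits in the side region $D_\partial\subset N_Q$ for every $w\in[-\varepsilon,1+\varepsilon]$, so (B-D') applies uniformly across $E\cap N_D''$, the middle slab, and $F\cap N_D''$. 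Once you invoke (B-D') throughout, the argument simplifies: $g_E^*\eta_P$ and $g_F^*\eta_P$ are pullbacks of the \emph{same} form $dt_j^k$ via the \emph{same} formula, hence $\eta_E=\eta_F=g_D^*\eta_P$ on $N_D''$, and the case analysis you anticipated at the end is unnecessary. With this correction your proof is essentially identical to the paper's.
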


\begin{proof}
Since $\eta_E=g_E^*\eta_P$ on $E\cap N_D''$ and $\eta_F=g_F^*\eta_P$ on $F\cap N_D''$ are given as the pull-backs of the same form $\eta_P=dt_j^k$ and $(u,v,w)$ satisfies the condition (B-D'), we have $\eta_E=\eta_F$ on $N_D''$.
Since $g_E^*\eta_P=\eta_D$ on $E\cap N_D''$, $g_F^*\eta_P=\eta_D$ on $F\cap N_D''$
and both of $g_D^*\eta_P$ and $\eta_D$ are defined on $N_D''$  by the linear sum of those on $E\cap N_D''$ and $F\cap N_D''$ as in~\eqref{eq33}, we have $g_D^*\eta_P=\eta_D$ on $N_D''$.
\end{proof}

Thus the $1$-forms $\eta_D$ on $N_D$ and $\eta_P$ on $N_P$ are 
glued along $g_D^{-1}(N_P)$, which we denote by $\eta$.

Topologically, the decomposition of $M$ into 
$B_j^e$ ($j=1,\ldots,n_e$), $B_j^v$ ($j=1,\ldots,n_v$),
$N_{\mathcal R_i}$ ($i=1\ldots,n$) and $N_D$ depends only on the positive flow-spine $P$.
We then choose coordinates 
$(x_j^e,y_j^e,t_j^e)$, 
$(x_j^v,y_j^v,t_j^v)$, 
$(r_i,\theta_i,t_i)$, $(u,v,w)$ on these pieces
and define the $1$-form $\eta$ by using these coordinates.

\begin{definition}
Let $\mathcal C$ be coordinates on the pieces of the decomposition of $M$ with respect to $P$. We call $\mathcal C$ a {\it coordinate system of $M$ with respect to $P$} and
the $1$-form $\eta$ obtained from $\mathcal C$ as above
the {\it reference $1$-form of $(P,\mathcal C)$}.
\end{definition}

In the rest of this section, we discuss some properties of $\eta$ and $dw$ on $N_D$ and prove the inequality $\eta\land d\eta\geq 0$.
Set $\hat W_{i,j}=\{(u,v,w)\in N_D\mid g_D(u,v,0)\in W_{i,j}\}$
and $\check W_{i,j}=\{(u,v,w)\in N_D\mid g_D(u,v,1)\in W_{i,j}\}$, see Figure~\ref{fig7-3},
and then set $\hat W=\bigcup_{i,j}\hat W_{i,j}$ and $\check W=\bigcup_{i,j}\check W_{i,j}$.

\begin{figure}[htbp]
\begin{center}
\includegraphics[width=6.5cm, bb=191 622 375 711]{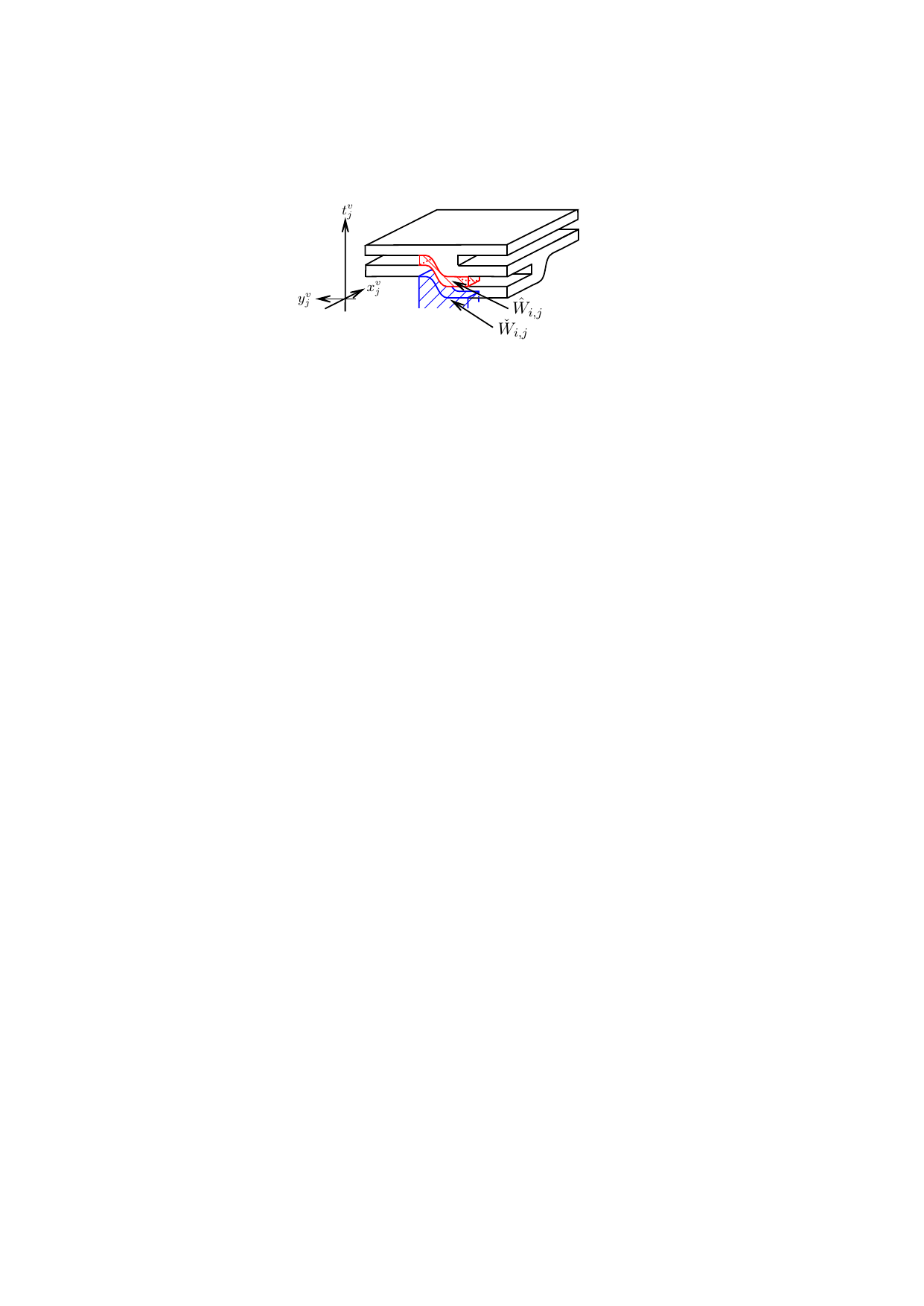}
\caption{The sets $\hat W_{i,j}$ and $\check W_{i,j}$.}\label{fig7-3}
\end{center}
\end{figure}

We first introduce the sets $S_E$ and $S_F$ on $D^2\times\{-\varepsilon\}$ and $D^2\times \{1+\varepsilon\}$, respectively. 
Roughly speaking, $S_E$ and $S_F$ are graphs of the DS-diagram of $P$ on the lower
and upper hemispheres, respectively, when we regard $N_D$ as the $3$-ball for the DS-diagram. 
For each $i=1,\ldots,n_e$, we set 
$T^e_{i,E}=B_i^e\cap  \{y_i^e=\frac{3}{2}\}\cap g_D(D^2\times\{-\varepsilon\})$
and  $T^e_{i,F}=B_i^e\cap  \{y_i^e=\frac{3}{2}\}\cap g_D(D^2\times\{1+\varepsilon\})$.
For each $j=1,\ldots,n_v$, we set 
\[
\begin{split}
T^v_{j,E}=
&B_j^v\cap  g_D(D^2\times\{-\varepsilon\})\cap \\
&\left(
\left\{y_j^v=\frac{3}{2},\; t_j^v>4\right\}
\cup 
\left\{x_j^v=\frac{3}{2},\;\frac{3}{2}\leq y_j^v\leq 3\right\}
\cup 
\left\{x_j^v=\frac{3}{2},\; t_j^v<3\right\}
\right)
\end{split}
\]
and
\[
\begin{split}
T^v_{j,F}=
&B_j^v\cap g_D(D^2\times\{1+\varepsilon\})\cap \\
&\left(
\left\{x_j^v=\frac{3}{2},\; t_j^v<4\right\}
\cup
\left\{y_j^v=\frac{3}{2},\; \frac{3}{2}\leq x_j^v\leq 3,\right\}
\cup 
\left\{y_j^v=\frac{3}{2},\; t_j^v>2\right\}
\right).
\end{split}
\]
See the left in Figure~\ref{fig7-7}, where most parts of $T^v_{j,F}$ are hidden behind.
Then $S_E$ and $S_F$ are set as
\[
   S_E=g_D^{-1}\left(\left(\bigcup_{i=1}^{n_e} T^e_{i,E}\right)\cup\left(\bigcup_{j=1}^{n_v} T^v_{j,E}\right)\right)
\text{\;\,and \;\,}
   S_F=g_D^{-1}\left(\left(\bigcup_{i=1}^{n_e} T^e_{i,F}\right)\cup\left(\bigcup_{j=1}^{n_v} T^v_{j,F}\right)\right).
\]

\begin{figure}[htbp]
\begin{center}
\includegraphics[width=14cm, bb=129 613 528 713]{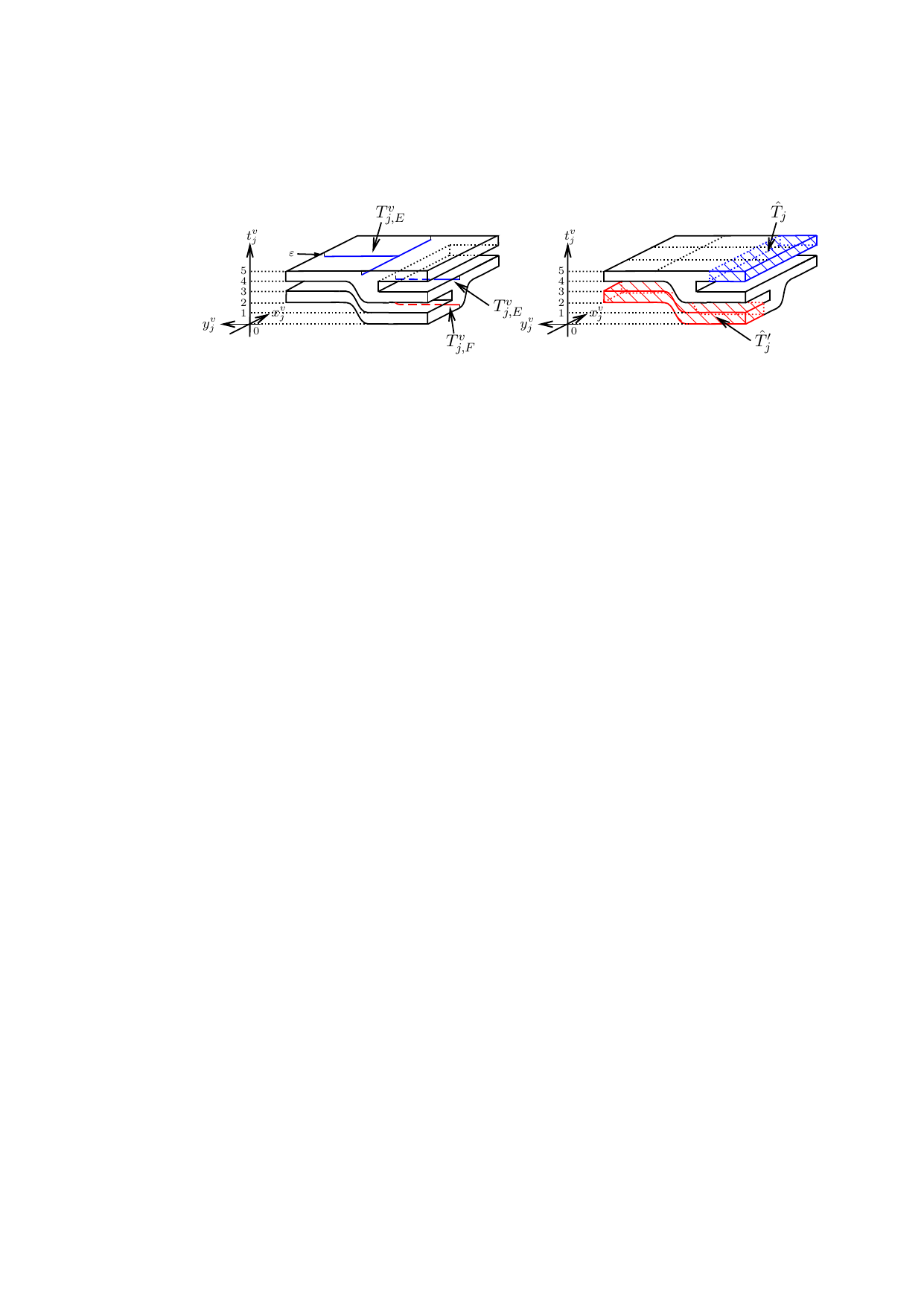}
\caption{The sets $T^v_{j,E}$, (a piece of) $T^v_{j,F}$, $\hat T_j$ and $\hat T_j'$.}\label{fig7-7}
\end{center}
\end{figure}

To explain the properties of $\eta$ and $dw$, we decompose $N_D$ into several pieces and observe them on each of these pieces.
Recall that the coordinates $(x_j^e, y_j^e, t_j^e)$ on $B_j^e$ and
 $(x_j^v, y_j^v, t_j^v)$ on $B_j^v$ are chosen, in Section~\ref{sec52},  such that
\[
\begin{split}
\pr^{-1}(\Nbd(\ell_j;U_j^e))&=\{(x_j^e, y_j^e, t_j^e)\in B_j^e\mid 1\leq y_j^e\leq 2\},\\
\pr^{-1}(\Nbd(\ell_j;U_j^v))&=\{(x_j^v, y_j^v, t_j^v)\in B_j^v\mid 1\leq y_j^v\leq 2\}, \text{\; and}\\
\pr^{-1}(\Nbd(\ell'_j;U_j^v))&=\{(x_j^v, y_j^v, t_j^v)\in B_j^v\mid 1\leq x_j^v\leq 2\}.
\end{split}
\]
Let $N_Q'\subset N_Q$ be the closure of the union of $\pr^{-1}(\Nbd(\ell_i;U_i^e))$ for $i=1,\ldots,n_e$ and
 $\pr^{-1}\left(\Nbd(\ell_j;U_j^v) \cup \Nbd(\ell_j';U_j^v)\right)\setminus (\hat T_j\cup \hat T_j')$ for $j=1,\ldots,n_v$ in $N_Q$,
where
\[
\hat T_j=
B_{j,2}^v \cap \left\{(x_j^v, y_j^v, t_j^v)\in B_j^v
 \mid 4\leq t_j^v\leq 5\right\}
\]
and 
\[
\hat T'_j=
B_{j,2'}^v \cap \left\{(x_j^v, y_j^v, t_j^v)\in B_j^v\mid 
h(y_j^v) \leq t_j^v\leq h(y_j^v) +1\right\},
\]
see the right in Figure~\ref{fig7-7}.
Then we set $A_F=g_F^{-1}(N_Q')\cap (D^2\times \{1+\varepsilon\})$
and
$N_A=(A_F\times [-\varepsilon, 1+\varepsilon])\cup \check W$.
See Figure~\ref{fig7-2},
where $\pr_{D^2}$ is the first projection $N_D=D^2\times [-\varepsilon, 1+\varepsilon]\to D^2$,

\begin{figure}[htbp]
\begin{center}
\includegraphics[width=14cm, bb=128 569 563 711]{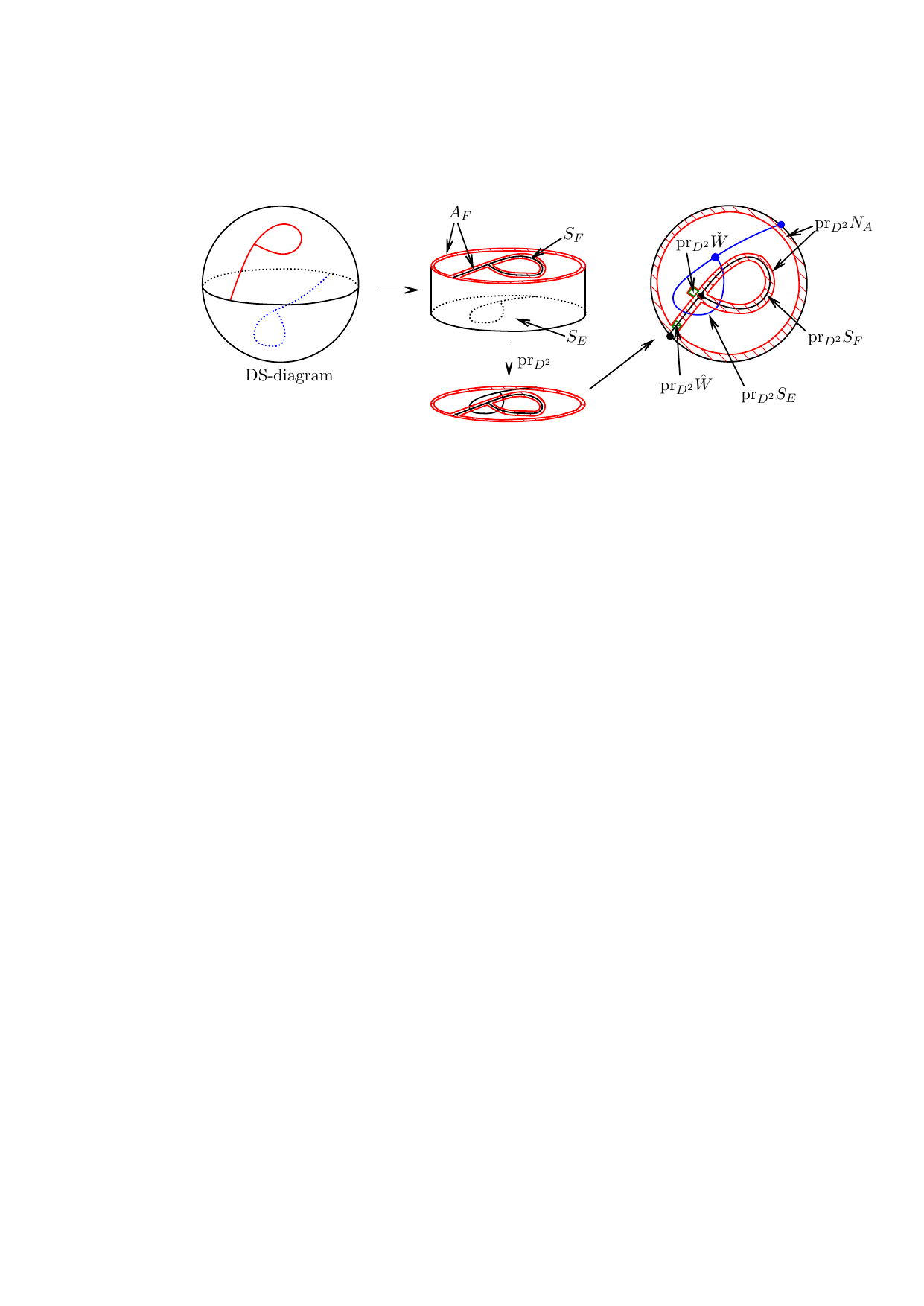}
\caption{The positions of $S_E$, $S_F$, $A_F$, $N_A$, $\hat W_{i,j}$ and $\check W_{i,j}$.}\label{fig7-2}
\end{center}
\end{figure}

The next lemma is about $\eta$ outside $\hat W\cup \check W$.

\begin{lemma}\label{lemmaC}
\begin{itemize}
\item[(1)] $\eta_D=dw$ on $N_D\setminus N_A$.
\item[(2)] $d\eta_E=d\eta_F=0$ on $N_A\setminus (\hat W\cup\check W)$.
\item[(3)] $\eta_E=dw$ on $N_A\setminus N_D'$.
\item[(4)] $\eta_E=\eta_F=dw$  on $N_D'\setminus \hat W$.
\end{itemize}
\end{lemma}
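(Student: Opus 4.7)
The plan is to verify each of the four statements by direct computation, using the explicit coordinate conditions (B-D'), (R-E), (R-F), (B-E), (B-F) together with the local description of $\eta_P$ furnished by Lemma~\ref{lemmaB}. The basic principle is that $\eta_E$ (resp.\ $\eta_F$) is obtained as the $w$-invariant extension of $g_E^*\eta_P|_E$ (resp.\ $g_F^*\eta_P|_F$), so its value and exterior derivative at any $(u,v,w)\in N_D$ are controlled entirely by $\eta_P$ at $g_D(u,v,0)$ (resp.\ at $g_D(u,v,1)$).

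Statement (2) is essentially immediate from Lemma~\ref{lemma31}: since $d\eta_P=0$ off $\bigcup W_{i,j}$, the pullback $g_E^*\eta_P$ is closed wherever $g_D(u,v,0)\notin\bigcup W_{i,j}$, which by the definition of $\hat W$ is precisely $N_D\setminus\hat W$. The analogous statement for $\eta_F$ gives $d\eta_F=0$ off $\check W$. Hence both $d\eta_E$ and $d\eta_F$ vanish on $N_A\setminus(\hat W\cup\check W)$.

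For (4), a point $(u,v,w)\in N_D'\setminus\hat W$ has $g_D(u,v,0)\in D_\partial\cap(B_j^k\setminus\bigcup W_{i,j})$, and (B-D') expresses $t_j^k\circ g_D(u,v,w)$ as $w$ plus either a constant or $h_{i,j}(\theta_i(u,v))+1$. Since $h_{i,j}$ is locally constant outside the $\theta_i$-range defining $W_{i,j}$, the offset has zero $(u,v)$-differential off $\hat W$, so $g_D^*\eta_P=dw$; after $w$-invariant extension both $\eta_E$ and $\eta_F$ equal $dw$ there. Statement (3) is the analogous calculation with $g_E$ in place of $g_D$: on $N_A\setminus N_D'$ the image $g_E(u,v,0)$ lies either in $N_{\mathcal R_i}$ (giving $t_i\circ g_E=w+1$ by (R-E)) or in a face of $B_j^k$ outside $D_\partial$ (giving $t_j^k\circ g_E$ equal to $w$ plus a constant by (B-E)); in either case $g_E^*\eta_P=dw$. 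Statement (1) now follows because $N_D\setminus N_A$ lies outside both the $\check W$-region and the $A_F$-column, so the same computations force $\eta_E=\eta_F=dw$ there, and therefore $\eta_D=(1-\tau(w))\eta_E+\tau(w)\eta_F=dw$.

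The main obstacle is verifying that $N_A$ is precisely calibrated to contain every column on which $\eta_E$ or $\eta_F$ can fail to equal $dw$. One needs in particular that each $\hat W_{i,j}$-column sits inside the $A_F$-column, which follows from the fact that the flow on $N_D$ preserves $(u,v)$ and identifies $g_D(u,v,-\varepsilon)$ with $g_D(u,v,1+\varepsilon)$ via the same $(r_i,\theta_i)$-coordinates in $\mathcal R_i$, together with the observation that $W_{i,j}$ projects onto the same region of $\mathcal R_i$ at both ends of $N_{\mathcal R_i}$. Once this calibration is in hand, each of the four verifications reduces to routine chain-rule differentiation against the piecewise-smooth functions $h$, $\hat h$, $h_{i,j}$, $\sigma$ and $\tau$.
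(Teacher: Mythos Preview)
Your proof is correct and follows essentially the same route as the paper's: each part is verified by pulling back $\eta_P$ via the coordinate relations (B-D'), (R-E), (R-F), (B-E), (B-F) together with Lemma~\ref{lemmaB} and Lemma~\ref{lemma31}, then invoking the defining formula~\eqref{eq33} for $\eta_D$. The paper's argument is terser---it simply asserts $\hat W\cup\check W\subset N_A$ rather than treating this calibration as an obstacle, and it handles the parts in the order (1), (2), (3), (4)---but the logical content is the same.
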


\begin{proof}
On $E\setminus N_A$,
$\eta_E$ is given by either $g^*_Ddt_i$ by Lemma~\ref{lemmaB} or $g^*_Ddt_j^k$, $k=e$ or $v$, by the construction of $\eta_E$. Hence $\eta_E=dw$ outside $N_A$ by the conditions~(B-D'), (R-E) and (B-E). Note that $\hat W\cup\check W \subset N_A$.
Next we observe $\eta_F$ on  $F\setminus N_A$. This piece is the disjoint union of $g_F^{-1}(N_{\mathcal R_i})\setminus \check W$ for $i=1,\ldots,n$, and  we have $\eta_F=g^*_Ddt_i$ on each $g_F^{-1}(N_{\mathcal R_i})\setminus \check W$.
Since $dw=g^*_Ddt_i$ by the condition~(R-F), we have $\eta_F=dw$. 
Thus, by~\eqref{eq33}, we have the assertion~(1).

Since $\eta_P$ is defined as $dt_j^k$ on 
$\left(g_E(E\cap N_A)\cap B_j^k \right)\setminus W_j^v$ and $dt_i$ on 
$\left(g_E(E\cap N_A)\cap N_{\mathcal R_i}\right)\setminus \bigcup_jW_{i,j}$, 
we have $d\eta_E=0$ on $N_A\setminus \hat W$.
We also have $d\eta_F=0$ on $N_A\setminus \check W$ by the same reason. This proves the assertion~(2).
On  $N_A\setminus N_D'$,
$\eta_E$ is given by either $g^*_Ddt_i$ by Lemma~\ref{lemmaB} or $g^*_Ddt_j^k$ by the construction of $\eta_E$. Hence $\eta_E=dw$ by the conditions~(R-E) and~(B-E).
This proves the assertion~(3). The assertion~(4) follows from the condition~(B-D').
\end{proof}

In the next two lemmas, we discuss $dw$ and $\eta$ on $\hat W$ and $\check W$.
The $1$-form $g^*_Edt_i$ on $E\cap \hat W_{i,j}$ is invariant under translation to the $w$-coordinate.
We extend it to the whole $\hat W_{i,j}$ by this invariance and denote it again 
by $g^*_Edt_i$. This is used in (1) below. The extensions in (3) below are also given by the same idea.

\begin{lemma}\label{lemmaA}
\begin{itemize}
\item[(1)] 
$dw=g^*_Edt_i$ on $\hat W_{i,j}$.
\item[(2)] $\eta_E=g_E^*\left(dt_i+\sigma(r_i)\frac{dh_{i,j}}{d\theta_i}(\theta_i)d\theta_i\right)$ on $E\cap \hat W_{i,j}$.
\item[(3)] $\eta_F=g_F^*dt_j^k=dw+g_E^*\left(\frac{dh_{i,j}}{d\theta_i}(\theta_i)d\theta_i\right)=g_E^*\left(dt_{i}+\frac{dh_{i,j}}{d\theta_{i}}(\theta_{i})d\theta_{i}\right)$ on $F\cap \hat W_{i,j}$, where $g_E^*\left(\frac{dh_{i,j}}{d\theta_i}(\theta_i)d\theta_i\right)$ and $g_E^*\left(dt_{i}+\frac{dh_{i,j}}{d\theta_{i}}(\theta_{i})d\theta_{i}\right)$ are the extensions of those on $E\cap \hat W_{i,j}$ to the whole $\hat W_{i,j}$.
\end{itemize}
\end{lemma}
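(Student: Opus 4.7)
The plan is to verify each equality first on the appropriate slab --- $E\cap \hat W_{i,j}$ for parts (1) and (2), and $F\cap \hat W_{i,j}$ for part (3) --- where $g_E=g_D|_E$ and $g_F=g_D|_F$ admit explicit coordinate descriptions from the construction of $N_D$, and then propagate the identities to all of $\hat W_{i,j}$ by $w$-translation invariance, since both sides of each equation will be manifestly $w$-invariant once verified on the slab.

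I would first record that $\hat W_{i,j}\subset N_D'$: the top face $H_{j,2'}^v$ of $W_j^v$ lies in the connected component of $B_j^v\cap g_D(N_D)$ abutting the $E$-cycle. Condition (B-D'), together with the accompanying note, then yields the key identity
\[
t_j^v\circ g_D(u,v,w)=w+h_{i,j}(\theta_i)+1 \quad\text{throughout}\quad \hat W_{i,j}.
\]
For (1), condition (R-E) gives $t_i\circ g_D(u,v,w)=w+1$ on $E\cap \hat W_{i,j}$, hence $g_E^*dt_i=dw$ there, and since both forms are $w$-invariant the equality extends to $\hat W_{i,j}$. For (2), by definition $\eta_E=g_E^*\eta_P=g_E^*\eta_{\mathcal R_i}$ on $E\cap \hat W_{i,j}$ because $W_{i,j}\subset N_{\mathcal R_i}$, so Lemma~\ref{lemmaB} directly supplies the explicit formula after pulling back via $g_E$.

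Part (3) requires tracing where $g_F$ maps $F\cap \hat W_{i,j}$. By the displayed identity, for $w\in[1,1+\varepsilon]$ we have $t_j^v\circ g_D(u,v,w)\in[h(y_j^v)+2,\,h(y_j^v)+2+\varepsilon]$, so the image lies inside $B_j^v$ just above the bottom face $H_{j,3'}^v$. Direct inspection against the gluing list (v-gl) confirms that this range of $t_j^v$ avoids $W_j^v\subset\{h(y_j^v)\leq t_j^v\leq h(y_j^v)+1\}$ and every other overlap of $B_j^v$ with $\bigcup_i N_{\mathcal R_i}$, so $\eta_P=\eta_Q=dt_j^v$ at each image point. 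Hence $\eta_F=g_F^*dt_j^v$, and differentiating the displayed identity yields
\[
g_F^*dt_j^v=dw+\frac{dh_{i,j}}{d\theta_i}(\theta_i)\,d\theta_i;
\]
the final equality in (3) is then immediate from part (1) applied to the $dt_i$-summand, after observing that the $\theta_i$-term is $w$-invariant on $E\cap \hat W_{i,j}$ (by the $w$-independence of the first two coordinates of $N_{\mathcal R_i}$ in (R-E)) and hence its $w$-invariant extension agrees with the intrinsic form on $\hat W_{i,j}$.

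The main obstacle is the geometric step in (3): one must correctly see that the flow line in $N_D$ from $(u,v,0)$ to $(u,v,1)$ enters on the top face $H_{j,2'}^v$ of $W_j^v$ and exits on the \emph{higher} bottom face $H_{j,3'}^v$ of $B_j^v$ at the same $(x_j^v,y_j^v)$ coordinates (using their $w$-independence from (B-D')), and then verify that the resulting $t_j^v$-range lies strictly outside every $N_{\mathcal R_i}$-overlap in $B_j^v$, so that $\eta_P$ reduces to the simpler $\eta_Q=dt_j^v$ rather than the modified $\eta_{\mathcal R_i}$.
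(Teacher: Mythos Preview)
Your proof is correct and follows essentially the same approach as the paper: verify each identity on the relevant slab using the coordinate conditions, Lemma~\ref{lemmaB}, and the gluing map~\eqref{eq30}, then extend by $w$-invariance. Two minor differences worth noting: for~(1) you appeal directly to condition~(R-E), whereas the paper obtains $dw=g_E^*dt_i$ by combining~(B-D') with the gluing relation $dt_j^v=dt_i+\frac{dh_{i,j}}{d\theta_i}d\theta_i$ from~\eqref{eq30}; and for~(3) you supply an explicit geometric justification that $g_F(F\cap\hat W_{i,j})$ lands in the portion of $B_j^v$ just above $H^v_{j,3'}$ and outside all $N_{\mathcal R_i}$-overlaps (so $\eta_P=dt_j^v$ there), a step the paper leaves implicit.
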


\begin{proof}
Since $dt_j^k=dw+\frac{dh_{i,j}}{d\theta_i}(\theta_i)d\theta_i$ by the condition~(B-D') and $dt_j^k=dt_i+\frac{dh_{i,j}}{d\theta_i}(\theta_i)d\theta_i$ by the definition of $g_{\mathcal R_i}$ on $W_{i,j}$,
we have $dw=g^*_Edt_i$ on $E\cap \hat W_{i,j}$. 
Since both of $dw$ and $g^*_Edt_i$ are invariant under translation to the $w$-coordinate,
%Thus 
the assertion~(1) holds.
On $E\cap \hat W_{i,j}$, we have the assertion~(2) by Lemma~\ref{lemmaB}.
On $F\cap \hat W_{i,j}$, $g^*_Fdt_j^k
=dw+g^*_F\left(\frac{dh_{i,j}}{d\theta_i}(\theta_i)d\theta_i\right)
=dw+g^*_E\left(\frac{dh_{i,j}}{d\theta_i}(\theta_i)d\theta_i\right)$ 
by the condition~(B-D')
and $g^*_Fdt_j^k=g_E^*dt_j^k=g^*_E\left(dt_i+\frac{dh_{i,j}}{d\theta_i}(\theta_i)d\theta_i\right)$ by the condition~(B-D') and the gluing map~\eqref{eq30}.
Thus the assertion~(3) holds.
\end{proof}

The $1$-form $g^*_Fdt_i$ on $F\cap \check W_{i,j}$ is invariant under translation to the $w$-coordinate.
We extend it to the whole $\check W_{i,j}$ by this invariance and denote it by again 
by $g^*_Fdt_i$. 

\begin{lemma}\label{lemmaD}
\begin{itemize}
\item[(1)] 
$dw=g^*_Fdt_i$ on $\check W_{i,j}$.
\item[(2)] $\eta_E=dw$ on $E\cap \check W_{i,j}$.
\item[(3)] $\eta_F=g_F^*\left(dt_i+\sigma(r_i)\frac{dh_{i,j}}{d\theta_i}(\theta_i)d\theta_i\right)$ on $F\cap \check W_{i,j}$.
\end{itemize}
\end{lemma}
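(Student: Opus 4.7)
The plan is to prove the three assertions in parallel with the corresponding parts of Lemma~\ref{lemmaA}, exchanging the roles of $(E,\hat W_{i,j})$ and $(F,\check W_{i,j})$. The computations rely on the coordinate conditions (R-F) and (B-F), on Lemma~\ref{lemmaB}, on Lemma~\ref{lemmaC}(3), and on the $w$-translation invariance used throughout to extend forms from $E$ or $F$ to the whole cylinder.

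For part~(1), I first work on $F\cap\check W_{i,j}$. Since $g_D(u,v,1)\in W_{i,j}\subset N_{\mathcal R_i}$ and the first two coordinates of $N_{\mathcal R_i}$ do not depend on $w$ by~(R-F), the whole piece $F\cap\check W_{i,j}$ lies in $g_D^{-1}(N_{\mathcal R_i})$. Then $t_i\circ g_D(u,v,w)=w-1$ on this piece by~(R-F), whence $g_F^*dt_i=dw$; since both forms are $w$-translation invariant, the identity extends to all of $\check W_{i,j}$. For part~(3), the same observation shows that $g_F(u,v,w)\in W_{i,j}$ for every $(u,v,w)\in F\cap\check W_{i,j}$, because $W_{i,j}$ spans $t_i\in[0,1]$ while $t_i=w-1\in[0,\varepsilon]$ on $F$. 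Therefore $\eta_F=g_F^*\eta_P=g_F^*\eta_{\mathcal R_i}$ on this piece, and the formula $\eta_{\mathcal R_i}=dt_i+\sigma(r_i)(dh_{i,j}/d\theta_i)(\theta_i)\,d\theta_i$ of Lemma~\ref{lemmaB} yields the desired identity after pull-back.

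For part~(2), my plan is to invoke Lemma~\ref{lemmaC}(3), which asserts $\eta_E=dw$ on $N_A\setminus N_D'$. Since $\check W_{i,j}\subset\check W\subset N_A$ by the definition of $N_A$, it suffices to verify $E\cap\check W_{i,j}\cap N_D'=\emptyset$, that is, that for every $(u,v,w)\in\check W_{i,j}$ one has $g_D(u,v,0)\notin D_\partial$. By construction, the vertex region $W_j^v$, and hence $W_{i,j}=g_{\mathcal R_i}^{-1}(W_j^v)$, sits in the interior of the block $B_j^v$ and is disjoint from the vertical faces corresponding to the $E$-cycle which determine $D_\partial$. Combined with the $w$-cylindrical structure of $N_D$ and the disjoint positioning of the strata depicted in Figure~\ref{fig7-3}, this yields the required disjointness, and Lemma~\ref{lemmaC}(3) then finishes part~(2).

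The hardest step is the combinatorial disjointness needed in part~(2): one must verify carefully, from the explicit coordinates $(u,v,w)$ on $N_D$ and the DS-diagram structure on $\partial N_D$, that no orbit in $N_D$ can have its $w=0$ crossing in $D_\partial$ and its $w=1$ crossing in $W_{i,j}$ simultaneously. Once this is in hand, parts~(1) and~(3) are routine pull-back computations using (R-F) together with the explicit formula for $\eta_{\mathcal R_i}$ on $W_{i,j}$ supplied by Lemma~\ref{lemmaB}.
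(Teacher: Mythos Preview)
Your proofs of (1) and (3) are correct and essentially match the paper's (your use of (R-F) for (1) is in fact a touch more direct than the paper's route through (B-F) and the gluing map~\eqref{eq30}, but the content is the same).

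For (2), your strategy is sound but your justification of the disjointness $\check W_{i,j}\cap N_D'=\emptyset$ is not quite right. You argue that $W_j^v$ sits away from the vertical $E$-cycle faces of $B_j^v$ and hence from $D_\partial$; but that is a statement in $N_P$, whereas what you need is a statement in $N_D$: that the $(u,v)$-columns satisfying $g_D(u,v,1)\in W_{i,j}$ are disjoint from those satisfying $g_D(u,v,0)\in D_\partial$. These two conditions refer to opposite ends of the cylinder, so disjointness of $W_j^v$ and $D_\partial$ inside $B_j^v$ does not by itself give what you want. The paper sidesteps this by using the single combinatorial fact $\hat W\cap\check W=\emptyset$ (visible in Figure~\ref{fig7-3}); since $\eta_E$ fails to equal $dw$ only where the $E$-side image meets some $W_{i',j'}$, i.e.\ on $\hat W$, this immediately yields $\eta_E=dw$ on $E\cap\check W_{i,j}$. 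Equivalently, you can combine Lemma~\ref{lemmaC}(3) and~(4): together they give $\eta_E=dw$ on $(N_A\setminus N_D')\cup(N_D'\setminus\hat W)$, and $\check W_{i,j}\subset N_A$ with $\check W_{i,j}\cap\hat W=\emptyset$ finishes the argument without ever needing $\check W_{i,j}\cap N_D'=\emptyset$.
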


\begin{proof}
Since $dt_j^k=dw+\frac{dh_{i,j}}{d\theta_i}(\theta_i)d\theta_i$ by the condition~(B-F)
and $dt_j^k=dt_i+\frac{dh_{i,j}}{d\theta_i}(\theta_i)d\theta_i$ by the gluing map~\eqref{eq30},
we have $dw=g^*_Fdt_i$ on $F\cap \check W_{i,j}$. 
Since both of $dw$ and $g^*_Fdt_i$ are invariant under translation to the $w$-coordinate,
the assertion~(1) holds.
The assertion~(2) follows from the property that $\hat W\cap \check W=\emptyset$.
The assertion~(3) follows from Lemma~\ref{lemmaB}.
\end{proof}

The next inequality is important for making contact forms in the proof of Theorem~\ref{thm01}.

\begin{lemma}\label{lemma35}
$\eta\land d\eta\geq 0$ on $M$.
\end{lemma}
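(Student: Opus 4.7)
My plan is to verify $\eta \wedge d\eta \geq 0$ separately on the two glued pieces $N_P$ and $N_D$, using the formulas already assembled in the preceding lemmas. On $N_P$ the conclusion is immediate from Lemma~\ref{lemma32}, so all the work is on $N_D$, where $\eta_D = (1-\tau(w))\eta_E + \tau(w)\eta_F$. I would further decompose $N_D$ into the four pieces
\[
N_D \setminus N_A, \quad N_A \setminus (\hat W \cup \check W), \quad \hat W, \quad \check W,
\]
and check $\eta_D \wedge d\eta_D \geq 0$ on each.

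First, on $N_D \setminus N_A$, Lemma~\ref{lemmaC}(1) gives $\eta_D = dw$, hence $\eta_D \wedge d\eta_D = 0$. Next, on $N_A \setminus (\hat W \cup \check W)$, a case split against $N_D'$ together with Lemma~\ref{lemmaC}(3)--(4) yields $\eta_E = dw$, and Lemma~\ref{lemmaC}(2) gives $d\eta_E = d\eta_F = 0$. Then $d\eta_D = \tau'(w)\, dw \wedge \eta_F$, and expanding
\[
\eta_D \wedge d\eta_D = \bigl((1-\tau(w))\, dw + \tau(w)\, \eta_F\bigr) \wedge \tau'(w)\, dw \wedge \eta_F
\]
kills everything using $dw \wedge dw = 0$ and $\eta_F \wedge dw \wedge \eta_F = -dw \wedge \eta_F \wedge \eta_F = 0$.

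The only nontrivial contributions come from $\hat W$ and $\check W$. On $\hat W_{i,j}$, Lemma~\ref{lemmaA} allows me to write $\eta_E = dw + \sigma(r_i)\, \lambda$ and $\eta_F = dw + \lambda$ with $\lambda = g_E^*\bigl(\tfrac{dh_{i,j}}{d\theta_i}(\theta_i)\, d\theta_i\bigr)$, a $w$-invariant closed $1$-form. Then $\eta_D = dw + F(r_i, w)\, \lambda$ with $F = (1-\tau)\sigma + \tau$, and since $d\lambda = 0$ a short computation should give
\[
\eta_D \wedge d\eta_D = (1-\tau(w))\, \sigma'(r_i)\, \tfrac{dh_{i,j}}{d\theta_i}(\theta_i)\, dw \wedge dr_i \wedge d\theta_i.
\]
Each scalar factor is nonnegative: $\tau$ and $\sigma$ are monotone increasing by construction, and $h_{i,j}$ is monotone increasing (this is the point at which the positivity of the flow-spine enters). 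The sign of $dw \wedge dr_i \wedge d\theta_i$ is positive by the same orientation convention used in the proof of Lemma~\ref{lemma32}(2), since Lemma~\ref{lemmaA}(1) identifies $dw = g_E^* dt_i$ on $\hat W_{i,j}$. The case $\check W_{i,j}$ is entirely parallel via Lemma~\ref{lemmaD}, producing a factor $\tau(w)$ in place of $1-\tau(w)$.

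The main obstacle I expect is the bookkeeping in the second step, and, more substantively, pinpointing where the positivity of the flow-spine is used: it enters through the monotonicity of $h$ (and hence $h_{i,j}$) hard-wired into the local coordinate model of an $\ell$-type vertex in Figure~\ref{fig6}. If an $r$-type vertex were allowed, the sign of $\tfrac{dh_{i,j}}{d\theta_i}$ could change on some $\hat W_{i,j}$ or $\check W_{i,j}$ and the inequality would fail. Once the bookkeeping across the decomposition is organized and this monotonicity is noted, the calculation is routine.
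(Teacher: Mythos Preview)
Your proposal is correct and follows essentially the same route as the paper: the same decomposition of $N_D$ into $N_D\setminus N_A$, $N_A\setminus(\hat W\cup\check W)$, $\hat W$, $\check W$, and the same appeals to Lemmas~\ref{lemma32}, \ref{lemmaC}, \ref{lemmaA}, \ref{lemmaD}. The only cosmetic difference is that on $\hat W_{i,j}$ and $\check W_{i,j}$ you first repackage $\eta_D$ as $dw + F\lambda$ and differentiate, whereas the paper expands $\eta_D\wedge d\eta_D$ into its $\eta_E,\eta_F$ cross-terms and evaluates each; both computations produce the identical expression $(1-\tau)\sigma' h_{i,j}'\,dr_i\wedge d\theta_i\wedge dt_i$ (resp.\ with factor $\tau$), and your identification of the orientation via $dw=g_E^*dt_i$ matches the paper's.
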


\begin{proof}
The inequality holds on $M\setminus N_D$ by Lemma~\ref{lemma32}.
On $N_D\setminus N_A$, $\eta\land d\eta=0$ by Lemma~\ref{lemmaC}~(1). 
From~\eqref{eq33}, we have
\[
\begin{split}
   \eta_D\land d\eta_D
=&
\left((1-\tau(w))\eta_E+\tau(w)\eta_F\right) 
\land \left(-\frac{d\tau}{dw}(w)dw\land \eta_E\right.\\
&\left.+(1-\tau(w))d\eta_E+\frac{d\tau}{dw}(w)dw\land \eta_F+\tau(w)d\eta_F\right) \\
=&
(1-\tau(w))^2\eta_E\land d\eta_E
+(1-\tau(w))\frac{d\tau}{dw}(w)\eta_E\land dw\land \eta_F \\
&+(1-\tau(w))\tau(w)\eta_E \land d\eta_F 
 -\tau(w)\frac{d\tau}{dw}(w)\eta_F\land dw\land \eta_E \\
&+(1-\tau(w))\tau(w)\eta_F \land d\eta_E 
+\tau(w)^2\eta_F\land d\eta_F \\
=&
(1-\tau(w))^2\eta_E\land d\eta_E
+\frac{d\tau}{dw}(w)\eta_E\land dw\land \eta_F \\
&+(1-\tau(w))\tau(w)(\eta_E \land d\eta_F+\eta_F \land d\eta_E) 
+\tau(w)^2\eta_F\land d\eta_F.
\end{split}
\]
This form vanishes on $N_A\setminus (\hat W\cup \check W)$ by Lemma~\ref{lemmaC}.
Thus it is enough to show the inequality on $\hat W\cup \check W$.

Let $(g_E^*dt)_{\hat W_{i,j}}$ denote the extension of $g_E^*dt_i$ on $E\cap \hat W_{i,j}$ to $\hat W_{i,j}$.
By Lemma~\ref{lemmaA}~(2) and~(3), we have $\eta_E\land (g_E^*dt)_{\hat W_{i,j}}\land \eta_F=0$ on $\hat W_{i,j}$.
By Lemma~\ref{lemmaA}~(1), $dw=(g_E^*dt)_{\hat W_{i,j}}$.
Hence the term $\frac{d\tau}{dw}(w)\eta_E\land dw\land \eta_F$ vanishes on $\hat W_{i,j}$.
The terms with $d\eta_F$ vanish since $d\eta_F=0$ by~Lemma~\ref{lemmaA}~(3).
The $3$-form $\eta_E\land d\eta_E$ is calculated, by Lemma~\ref{lemmaA}~(2), as
\[
\begin{split}
   \eta_E\land d\eta_E
&=g_E^*\left(dt_i+\sigma(r_i)\frac{dh_{i,j}}{d\theta_i}(\theta_i)d\theta_i\right)\land 
g_E^*\left(
\frac{d\sigma}{dr_i}(r_i)\frac{dh_{i,j}}{d\theta_i} (\theta_i)dr_i\land d\theta_i\right) \\ 
&=g_E^*\left(\frac{d\sigma}{dr_i}(r_i)\frac{dh_{i,j}}{d\theta_i}(\theta_i)dr_i\land d\theta_i\land dt_i\right),
\end{split}
\]
where the pull-backs of forms by $g_E$ are regarded as their extensions to $N_D$.
Also, by Lemma~\ref{lemmaA}~(2) and~(3), we have
\[
\begin{split}
   \eta_F\land d\eta_E
&=g_E^*\left(dt_{i}+\frac{dh_{i,j}}{d\theta_i}(\theta_i)d\theta_{i}\right)\land 
g_E^*\left(
\frac{d\sigma}{dr_i}(r_i)\frac{dh_{i,j}}{d\theta_i} (\theta_i)dr_i\land d\theta_i\right) \\ 
&=g_E^*\left(\frac{d\sigma}{dr_i}(r_i)\frac{dh_{i,j}}{d\theta_i}(\theta_i)dr_i\land d\theta_i\land dt_i\right),
\end{split}
\]
where the pull-backs of forms by $g_E$ are regarded as their extensions to $N_D$.
Since $\sigma$ and $h_{i,j}$ are monotone increasing, these two terms are non-negative.
Thus $\eta_D\land d\eta_D\geq 0$ on $\hat W$.

On $\check W_{i,j}$, by Lemma~\ref{lemmaD}~(2),
\[
   d\eta_D=d((1-\tau(w))\eta_E+\tau(w)\eta_F)
=\frac{d\tau}{dw}dw\land \eta_F+\tau(w)d\eta_F
\]
and hence, by Lemma~\ref{lemmaD}, we have
\[
\begin{split}
   \eta_D\land d\eta_D&=(1-\tau(w))\tau(w)dw\land d\eta_F
+\tau(w)^2\eta_F\land d\eta_F \\
&=\tau(w)g_F^*\left(\frac{d\sigma}{dr_i}(r_i)\frac{dh_{i,j}}{d\theta_i}(\theta_i)dr_i\land d\theta_i\land dt_i\right) \geq 0,
\end{split}
\]
where the pull-back of a form by $g_F$ is regarded as its extension to $N_D$.
This completes the proof.
\end{proof}

\section{Existence}

We use the same notations as in the previous section.
In this section, we prove the following theorem.

\begin{theorem}\label{thm1}
Let $P$ be a positive flow-spine of a closed, connected, oriented, smooth $3$-manifold $M$.
Then there exists a contact form on $M$ whose Reeb flow is carried by $P$.
\end{theorem}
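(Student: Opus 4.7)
The plan is to mimic the Thurston--Winkelnkemper construction for open books, using the reference $1$-form $\eta$ in place of $dt$. Concretely, I will produce a $1$-form $\beta$ on $P$ whose derivative is a positive area form on every region, extend it to a $1$-form $\hat\beta$ on $M$, and set $\alpha_R=\hat\beta+R\,\eta$ for sufficiently large $R>0$. Step one uses admissibility: by Proposition~\ref{prop01}, $P$ admits an assignment of real numbers $x_1,\dots,x_m$ to its edges satisfying the inequality in Definition in Section~4.1. Pick a $1$-form $\beta$ on a neighborhood of $S(P)$ in $P$ with $\int_{e_j}\beta=x_j$, then extend it across each region $R_i$ so that $d\beta|_{R_i}$ is a positive area form with respect to the branching orientation; this is possible because Stokes' theorem gives $\int_{R_i}d\beta=\sum_j\varepsilon_{ij}x_j>0$. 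Extend $\beta$ to a $1$-form $\hat\beta$ on $N_P$ so that on each $N_{\mathcal R_i}\cong\mathcal R_i\times[0,1]$ it is the pullback of $\beta|_{\mathcal R_i}$, and similarly on $N_Q$ in the rectangular coordinates of Section~5.2. Finally, extend $\hat\beta$ across $N_D=D^2\times[-\varepsilon,1+\varepsilon]$ so that $d\hat\beta|_{D^2\times\{w\}}$ remains a positive area form; since $N_D$ is a ball and the restriction of $\hat\beta$ to $g_D^{-1}(N_P)$ already has this property on horizontal slices, a standard bump-function interpolation suffices.

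For the contact condition, expand
\begin{equation*}
\alpha_R\wedge d\alpha_R=\hat\beta\wedge d\hat\beta+R\bigl(\hat\beta\wedge d\eta+\eta\wedge d\hat\beta\bigr)+R^{2}\,\eta\wedge d\eta.
\end{equation*}
By Lemma~\ref{lemma35}, the $R^{2}$-term is non-negative everywhere. Outside the thin collars $\hat W\cup\check W\cup\bigcup_{i,j}W_{i,j}$ where $d\eta=0$, the model coordinates of Section~5 make $\eta$ locally equal to $dw$ or $dt_j^k$, so that $\eta\wedge d\hat\beta$ equals $dw$ (resp.\ $dt_j^k$) wedged with the positive area form $d\hat\beta|_{\text{horizontal slice}}$; this is a strictly positive volume form, and it dominates $\hat\beta\wedge d\hat\beta$ once $R$ is large. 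Inside the collars, the explicit expressions in the proof of Lemma~\ref{lemma35} show that $\eta\wedge d\eta$ is strictly positive wherever $h_{i,j}'$ and $\sigma'$ are, so the $R^{2}$-term dominates all mixed terms for large $R$. A uniform choice of $R$, coming from compactness of $M$, therefore produces $\alpha_R\wedge d\alpha_R>0$ everywhere.

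It remains to check that the Reeb vector field $R_{\alpha_R}$ is carried by $P$, i.e.\ is positively transverse to each region and realizes the vertical flow of Figure~\ref{fig2-2} on $N_D$. Since $\alpha_R(R_{\alpha_R})=1$ and $\alpha_R$ is close to $R\,\eta$ in direction for large $R$, and $\eta$ by construction is $dt_j^k$ on $N_Q$ and interpolates between $dw$-type forms on $N_D$, the Reeb direction stays inside the half-space on which the coordinate $t$ (respectively $w$) is increasing; this realizes the local models of Figure~\ref{fig2} on $P$ and the vertical model on $N_D$. The principal obstacle is Step 3 in the collar regions $W_{i,j},\hat W_{i,j},\check W_{i,j}$: there $d\eta\ne 0$ but $\eta\wedge d\hat\beta$ need not be positive, and one must show the non-negativity of $\eta\wedge d\eta$ together with a pointwise lower bound forces $\alpha_R\wedge d\alpha_R>0$ after a uniform choice of $R$. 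This is precisely where the hypothesis that every vertex is of $\ell$-type is used: the sign of $dh_{i,j}/d\theta_i\cdot d\sigma/dr_i$ in the formula of Lemma~\ref{lemma32} is controlled by the $\ell$-type local model (cf.\ the remark after Figure~\ref{fig7}), whereas an $r$-type vertex would flip this sign and destroy the argument.
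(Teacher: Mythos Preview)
Your overall strategy matches the paper's: build $\beta$ on $P$ with $d\beta>0$, extend to $\hat\beta$ on $M$, and set $\alpha_R=\hat\beta+R\eta$. The gap is in the contact condition on the collar region $N_A\subset N_D$. You argue that inside the collars ``$\eta\wedge d\eta$ is strictly positive wherever $h_{i,j}'$ and $\sigma'$ are, so the $R^{2}$-term dominates all mixed terms for large $R$.'' But $\eta\wedge d\eta$ is only $\geq 0$, and it vanishes on the (nonempty, relatively open) portion of the collar where $\sigma'=0$ or $h_{i,j}'=0$ or $\tau=1$; at such points the $R^{2}$-term contributes nothing, while the linear-in-$R$ term involves $\hat\beta_E\wedge d\eta$, $\hat\beta_F\wedge d\eta$, and $\frac{d\tau}{dw}(-\hat\beta_E+\hat\beta_F)\wedge\eta\wedge dw$, none of which have a definite sign for a generic $\beta$. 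So there is no uniform $R$ making $\alpha_R\wedge d\alpha_R>0$ everywhere.

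The paper closes this gap by constructing $\beta$ with an additional smallness condition (condition~(b) in Lemma~\ref{lemma41}): on $N_A$ one builds $\beta$ from the model form $x^{s}dy$ on $\{0<x<1\}$ and takes $s$ large, so that $|\hat\beta_E|,|\hat\beta_F|$ shrink faster than $|d\hat\beta_E|,|d\hat\beta_F|$. Condition~(b) then guarantees that the ``bad'' pieces $B_E,B_F$ of the linear term are dominated pointwise by the ``good'' piece $((1-\tau)d\hat\beta_E+\tau d\hat\beta_F)\wedge\eta>0$, making the entire linear term strictly positive on $N_A$; combined with $\eta\wedge d\eta\geq 0$ this yields $\alpha_R\wedge d\alpha_R>0$ for $R$ large. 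Your Stokes-theorem extension of $\beta$ does not furnish any such bound, so the argument as written is incomplete. (Your Reeb transversality paragraph is also too heuristic: the paper computes $d\alpha_R(\partial_u,\partial_v)$ directly rather than arguing that $R_{\alpha_R}$ is ``close to'' the $\eta$-direction, and again condition~(b) is implicitly used there.)
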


\subsection{The $1$-form $\beta$ on $P$}\label{sec61}

In this subsection, we construct a $1$-form $\beta$ on a positive flow-spine $P$ of $M$ that
will be used to give a contact form on $M$.

Let $P$ be a positive flow-spine of $M$.
We assume that $P$ is embedded in $M$ so that, for each edge $e$ of $P$,
the closure of the union of any pair of adjacent regions that induce opposite orientations to $e$ is smooth along $e$.
Hereafter, a $1$-form on $P$ means that it is obtained by gluing the pullback of a $1$-form on $Q'$ 
by the projection $\pr_Q:Q\to Q'$ and $1$-forms on the smooth surfaces $R_1,\ldots,R_n$.

Set $G_E=\pr_{D^2}(S_E)$ and $G_F=\pr_{D^2}(S_F)$.
Each region $R_i$ of $P$ corresponds to a region $S_i$ on $D^2$ bounded by $G_E\cup\partial D^2$ as 
$S_i=\pr_{D^2}\circ g_D^{-1}(R_i\times\{1\})$.
Each region $S_i$ divides into several regions by $G_F$.
For example, the $1$-gon bounded by $G_E$ in Figure~\ref{fig7-6} divides into three regions by $G_F$. 
Let $G_{R_i}$ denote the graph 
on $R_i$ obtained as the image of $(g_E\circ\pr_{D^2}^{-1})(G_F\cap S_i)$ by 
the first projection $\pr_{\mathcal R_i}:N_{\mathcal R_i}=\mathcal R_i\times [0,1]\to \mathcal R_i$ 
with extending each endpoint on $\pr_{\mathcal R_i}(\partial \mathcal R_i)$ to $\partial R_i$. 
We denote the neighborhood $(\pr_{\mathcal R_i}\circ g_E(N_A))\cup \pr(N_Q'\cap R_i)$ of $G_{R_i}\cup\partial R_i$ in $R_i$ by $A_{R_i}$ and the connected components of $R_i\setminus G_{R_i}$ by $R_{i1},\ldots,R_{in_i}$,
see Figure~\ref{fig7-6}.

\begin{figure}[htbp]
\begin{center}
\includegraphics[width=15cm, bb=130 585 544 710]{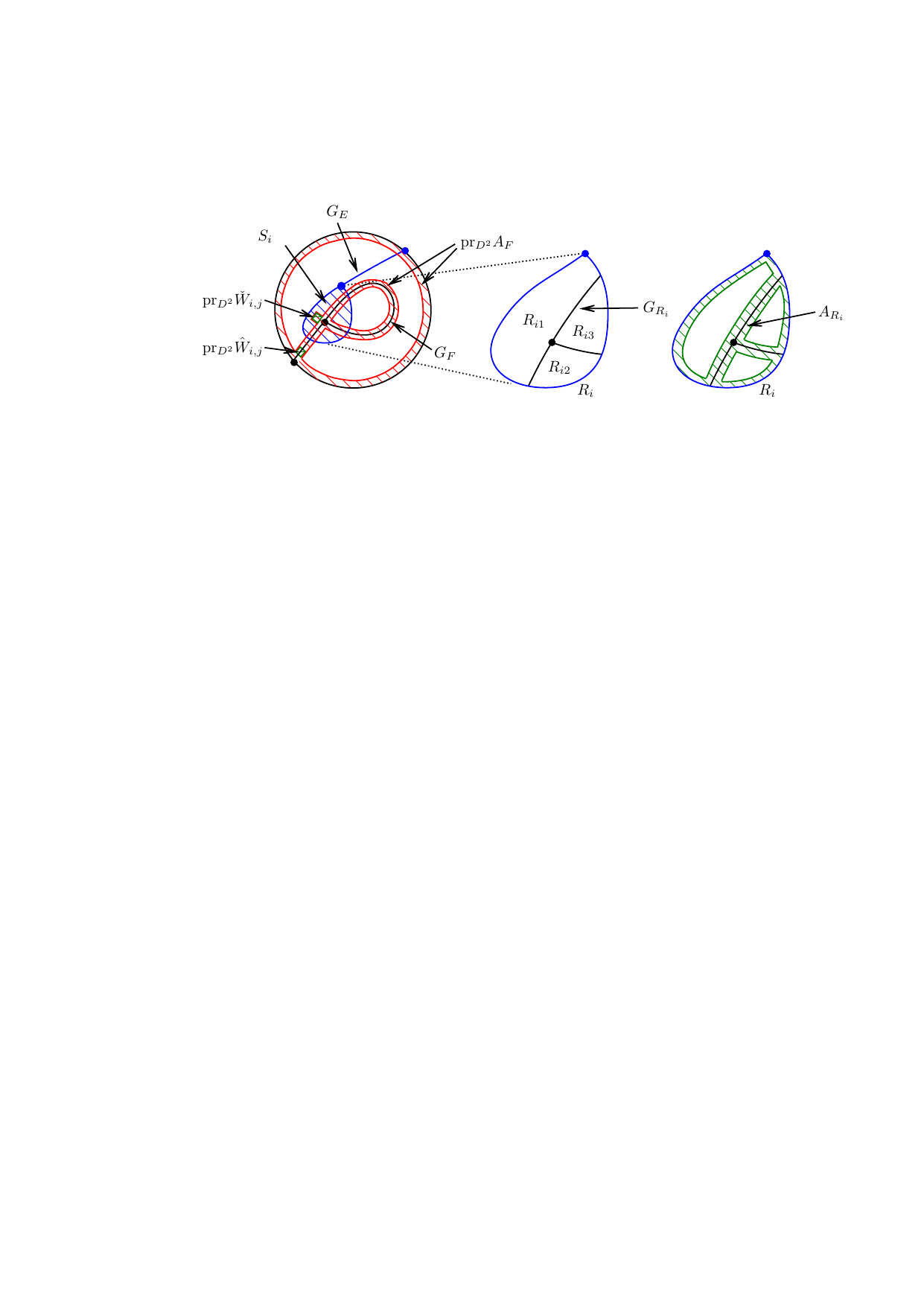}
\caption{The graph $G_{R_i}$, the divided regions $R_{i1},\ldots,R_{in_i}$ and the neighborhood $A_{R_i}$.}\label{fig7-6}
\end{center}
\end{figure}

We may choose a coordinate system $\mathcal C$ of $M$ with respect to $P$ so as 
to satisfy the conditions~(B-D'), (R-E), (R-F), (B-E) and (B-F) in the previous section
and the following additional condition:
\begin{itemize}
\item[(A)] There exists a deformation retraction from $A_{R_i}$ to $G_{R_i}\cup\partial R_i$ in each $R_i$.
\end{itemize}

\begin{definition}
A coordinate system $\mathcal C$ is said to be {\it non-degenerate} if 
it satisfies the conditions~(B-D'), (R-E), (R-F), (B-E), (B-F) and~(A).
\end{definition}

For a $1$-form $\beta$ on $P$,
let $\hat\beta$ be the $1$-form on $N_P$ defined by $\pr^*\beta$ on $N_Q$ and 
\begin{equation}\label{eq41}
   (1-\sigma(r_i))(\pr\circ g_{\mathcal R_i})^*\beta+\sigma(r_i)\pr_{\mathcal R_i}^*\beta
\end{equation}
on $N_{\mathcal R_i}$.
The $1$-forms $g_E^*\hat\beta$ on $E$ and $g_F^*\hat\beta$ on $F$ can extend to the whole $N_D$
since they are invariant under translation to the $w$-coordinate.
Let $\hat\beta_E$ and $\hat\beta_F$ denote the $1$-forms on $N_D$ obtained by extending 
$g_E^*\hat\beta$ on $E$ and $g_F^*\hat\beta$ on $F$ to $N_D$, respectively.

\begin{lemma}\label{lemmaE}
The $1$-form $\hat\beta$ does not depend on the variable $t_j^k$ on $B_j^k$ for $k=e, v$
and the variable $t_i$ on $N_{\mathcal R_i}$.
\end{lemma}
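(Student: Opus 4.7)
The plan is to verify the statement by direct inspection of the pieces in which $\hat\beta$ is defined, using the fact that all the maps involved in the definition collapse the flow direction. The key observation is that $Q'$ is $2$-dimensional and all the relevant projections factor through the $(x,y)$- or $(r,\theta)$-coordinates, so pulling back a $1$-form from $Q'$ or from $\mathcal R_i$ necessarily produces a $1$-form whose coefficients (and whose $dt$-component) do not involve the time coordinate.

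First I would treat $N_Q$. Here $\hat\beta=\pr^*\beta$, and on each $B_j^k$ the coordinates $(x_j^k,y_j^k,t_j^k)$ were chosen in Section~\ref{sec52} so that $\pr\colon B_j^k\to U_j^k\subset Q'$ is literally the projection onto the first two coordinates. Since by definition a $1$-form on $P$ restricted to $Q$ has the form $\pr_Q^*\beta_{Q'}$ for some $1$-form $\beta_{Q'}$ on $Q'$, the pullback $\pr^*\beta$ involves only $dx_j^k$ and $dy_j^k$ with coefficients depending only on $(x_j^k,y_j^k)$.

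Next I would treat $N_{\mathcal R_i}$. The second summand $\sigma(r_i)\pr_{\mathcal R_i}^*\beta$ is immediate: $\sigma$ depends only on $r_i$, and $\pr_{\mathcal R_i}\colon \mathcal R_i\times[0,1]\to\mathcal R_i$ is the first projection, so its pullback has no $dt_i$-component and no $t_i$-dependence. The first summand $(1-\sigma(r_i))(\pr_Q\circ g_{\mathcal R_i})^*\beta$ is the main point. On $N_{\mathcal R_i'}\setminus\bigcup_j W_{i,j}$ the gluing conditions (e-01), (e-23), (v-01), (v-23), (v-45) give $t_j^k\circ g_{\mathcal R_i}=t_i+\text{const}$, and since the remaining two coordinates of $B_j^k$ under $g_{\mathcal R_i}$ do not involve $t_i$, composing with $\pr$ (which collapses $t_j^k$) shows that $\pr\circ g_{\mathcal R_i}$ depends only on $(r_i,\theta_i)$; the pullback of $\beta$ therefore has the required form. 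On each $W_{i,j}$ the map $g_{\mathcal R_i}$ is given by the explicit formula~\eqref{eq30}, sending $(r_i,\theta_i,t_i)$ to $(r_i,\,c_i(\theta_i-c_{i,j}),\,t_i+h_{i,j}(\theta_i))$, so once again composition with $\pr$ kills the $t_j^v$-entry and $\pr\circ g_{\mathcal R_i}$ only sees $(r_i,\theta_i)$.

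The only place that requires a bit of care is precisely the piece $W_{i,j}$, because there $g_{\mathcal R_i}$ is not a translation in the $t$-direction but a shear by $h_{i,j}(\theta_i)$; one must make sure that the shear term, which contributed nontrivially to $d\eta_{\mathcal R_i}$ in Lemma~\ref{lemmaB}, plays no role here because the subsequent projection $\pr$ collapses the $t_j^v$-coordinate before any $dt$-term can be produced. Once this is verified on $W_{i,j}$, the coordinate-wise descriptions on $N_Q$ and on $N_{\mathcal R_i}$ agree on their overlap by construction of $\hat\beta$, so $\hat\beta$ is globally independent of $t_j^k$ on $B_j^k$ and of $t_i$ on $N_{\mathcal R_i}$.
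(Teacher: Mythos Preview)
Your proposal is correct and follows essentially the same approach as the paper's proof, just with considerably more detail spelled out. The paper condenses your entire argument into two sentences: it declares the claim ``obvious from the construction'' on $N_P\setminus\bigcup_{i,j}W_{i,j}$ and then notes that on $W_{i,j}$ both summands in~\eqref{eq41} depend only on $(r_i,\theta_i)$; your case-by-case unpacking of the gluing conditions and of formula~\eqref{eq30} is exactly what underlies those two sentences.
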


\begin{proof}
On $N_P\setminus \bigcup_{i,j} W_{i,j}$, the assertion is obvious from the construction of $\hat\beta$.
On $W_{i,j}$, both of $(\pr\circ g_{\mathcal R_i})^*\beta$ 
and $\pr_{\mathcal R_i}^*\beta$ depend only on the coordinates $(r_i,\theta_i)$ by~\eqref{eq41}. Thus the assertion follows.
\end{proof}

Let $\eta$ be the reference $1$-form of $(P,\mathcal C)$, where $\mathcal C$ is a non-degenerate coordinate system.
Let $\eta_E$ and $\eta_F$ be the $1$-forms on $N_D$ used when we define $\eta$ in the previous section.

\begin{lemma}\label{lemma41}
There exists a $1$-form $\beta$ on $P$ such that
\begin{itemize}
\item[(a)] $d\beta>0$ on $P$, and
\item[(b)] $\max\{|B_E|, |B_F|\}<\frac{1}{2}\min\{d\hat\beta_E\land \eta,
d\hat\beta_F\land\eta\}$ on $N_A$,
\end{itemize}
where 
\[
\begin{split}
   B_E&=\hat\beta_E\land\left(-\frac{d\tau}{dw}(w)\eta\land dw+(1-\tau(w))d\eta\right)
\text{\;\,and} \\
   B_F&=\hat\beta_F\land\left(\frac{d\tau}{dw}(w)\eta\land dw+\tau(w)d\eta\right).
\end{split}
\]
\end{lemma}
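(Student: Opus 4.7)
The plan is to use admissibility (Proposition~\ref{prop01}) to obtain a baseline $1$-form $\beta$ on $P$ satisfying (a), and then refine the construction via local exponential models to ensure (b).

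By Proposition~\ref{prop01}, since $P$ is positive, there exist real numbers $x_1,\ldots,x_m$ assigned to the edges $e_1,\ldots,e_m$ of $S(P)$ with $\sum_j\varepsilon_{ij}x_j > 0$ for every region $R_i$. I would first choose a smooth $1$-form along each edge $e_j$ with integral equal to $x_j$, extend smoothly across each region $R_i$, and then adjust by exact $1$-forms to ensure $d\beta$ is a positive area form everywhere; the inequality $\int_{\partial R_i}\beta = \sum_j\varepsilon_{ij}x_j > 0$ guarantees this is possible on each disk-like region. This yields~(a).

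For~(b), I would interpret both sides of the inequality as coefficient functions relative to a fixed background volume form on the compact set $N_A$. The $2$-forms $-\frac{d\tau}{dw}\eta\land dw + (1-\tau)d\eta$ and $\frac{d\tau}{dw}\eta\land dw + \tau d\eta$ appearing in $B_E$ and $B_F$ have uniformly bounded coefficients with respect to the coordinate system $\mathcal{C}$, since $d\tau/dw$ is bounded by construction and $d\eta$ is pointwise bounded by Lemma~\ref{lemma31}. On the other hand, $d\hat\beta_E\land\eta$ and $d\hat\beta_F\land\eta$ are positive volume forms on $N_A$ whose magnitudes are comparable to that of $d\beta$ on the corresponding region $R_i$, since $\eta$ has a nonvanishing $dw$-component and $\hat\beta_E, \hat\beta_F$ are built by pullback from $\beta$. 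Therefore (b) reduces to the pointwise requirement that $|\beta|/|d\beta|$ be smaller than some constant $\epsilon_0 > 0$ depending only on $\mathcal{C}$, uniformly on $A_{R_i} = \pr_{R_i}\circ g_D(N_A) \subset R_i$ for every $i$.

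To achieve this, I would refine $\beta$ by specifying a local model near the graph $G_{R_i} \cup \partial R_i$ as follows: in collar coordinates $(r, s)$ with $r = 0$ on an edge and $r > 0$ in $R_i$, take $\beta = (a_j + b_j e^{Kr})\,ds$ with $b_j > 0$ and $a_j + b_j$ chosen so that $\int_{e_j}\beta$ matches the prescribed value $x_j$. Then $d\beta = b_j K e^{Kr}\,dr\land ds > 0$ and $|\beta|/|d\beta| \leq (|a_j|/b_j + 1)/K$, which can be made arbitrarily small by taking $K$ large (the ratio $|a_j|/b_j$ can be kept bounded independently of $K$ by direct inspection of the admissible choices of $a_j, b_j$). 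An analogous exponential model works near each edge of the interior graph $G_{R_i}$. Patching these local models via a partition of unity and extending arbitrarily over the interior then yields a global $\beta$ on $P$ satisfying both (a) and (b) for $K$ sufficiently large. The main obstacle is to verify that the patching on overlaps, especially near the vertices of $S(P)$, preserves both $d\beta > 0$ and the smallness of $|\beta|/|d\beta|$ on $A_{R_i}$; I would manage this by taking the collar neighborhoods thin and $K$ uniformly large over all edges.
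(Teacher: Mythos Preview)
Your strategy matches the paper's: use admissibility to prescribe edge integrals, build $\beta$ on $P$ via a local model near $S(P)$ in which $|\beta|/|d\beta|$ can be made uniformly small by a parameter, and extend over the regions by the Thurston--Winkelnkemper volume-form argument. The paper uses the model $x^s\,dy$ on $\{0<x<1\}$ (with $s\to\infty$) in place of your exponential model $(a_j+b_je^{Kr})\,ds$, but the mechanism is the same: $x^s\to 0$ faster than $d(x^s\,dy)=sx^{s-1}dx\wedge dy$, so for large $s$ the inequality in (b) holds on $N_A$.

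There is, however, a genuine gap in your extension step. On $N_A$ the form $\hat\beta_E$ is the pullback via $g_E$, which lands not near $S(P)$ but near the interior graph $G_{R_i}\subset R_i$; so the local model must be imposed on all of $A_{R_i}=\Nbd(G_{R_i}\cup\partial R_i;R_i)$, and the subsequent extension must be done over each sub-region $R_{ij}$ cut out by $G_{R_i}$. For that extension to yield $d\beta>0$ you need $\int_{\partial R_{ij}}\beta>0$ for every $R_{ij}$, which requires choosing the integrals along the interior edges of $G_{R_i}$ compatibly with the already-fixed integrals along $\partial R_i$. The paper isolates this as a separate combinatorial lemma (Lemma~\ref{lemma42}); you have not stated or proved it, and ``extending arbitrarily over the interior'' will fail without it. Separately, the paper sidesteps your acknowledged ``main obstacle'' at vertices by defining a single form $\beta_{Q'}$ on the projected surface $Q'$ (placing all vertex- and edge-neighborhoods inside one copy of $(U,x^s\,dy)$) and pulling back via $\pr_Q$; this makes the three sheets along each edge of $S(P)$ agree automatically, with no partition of unity needed.
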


\begin{proof}
Since $P$ is a positive flow-spine, it satisfies the admissibility condition by Proposition~\ref{prop01}. Hence we can choose real numbers $x_1,\ldots,x_m$
that satisfy inequality~\eqref{eq1000}.
Each region $R_i$ divides into several regions $R_{ij}$ by $G_{R_i}$.
If an edge $e$ of $S(P)$ with a real number $x(e)$ divides into $k_e$ edges,
then we assign $x(e)/k_e$ to each of the $k_e$ edges, where the orientations of the $k_e$ edges
are chosen so that they are consistent with that of $e$.
We then assign orientations to the edges of $G_{R_i}$ and assign real numbers to them
so that, for each region $R_{ij}$, the sum of all assigned real numbers along $\partial R_{ij}$ is strictly positive.
The existence of such an assignment will be proved in Lemma~\ref{lemma42} below.

Now we take the real $2$-plane $\Real^2$ with the coordinates $(x,y)$ and set the $1$-form $x^sdy$ on it, where $s$ is a positive real number.
Set $U=\{(x,y)\in\Real^2\mid 0<x<1\}$.
For each vertex $v$ of $\pr(S(P))$, we choose an open disk $Q_v$ in $U$ with the $1$-form $x^sdy$.
For each edge $e$ of $\pr(S(P))$ connecting vertices $v_1$ and $v_2$,
we choose an embedded arc $\gamma_e$ in $U$ with an open neighborhood $Q_e$ in $U$ satisfying that 
\begin{itemize}
\item[(1)] the $1$-form on $Q_e$ near the endpoints coincides with those on $Q_{v_1}$ and $Q_{v_2}$, and
\item[(2)] $\int_{\gamma_e} x^sdy=x(e)$, where $x(e)$ is the real number assigned to the edge $e$.
\end{itemize}
We call the neighborhood $Q_e$ an {\it open band}.
We can find such an open band 
since the $1$-form $x^sdy$ does not depend on the $y$-coordinate while the integrated value depends on the $y$-coordinate.
By gluing such $1$-forms for all open disks $\{Q_v\}$ and open bands $\{Q_e\}$
and identifying the union of these pieces with $Q'$ so that the arc in each open band is sent to the corresponding edge of $S(P)$ in $Q'$, we obtain a $1$-form $\beta_{Q'}$ on $Q'$.

We then define a $1$-form $\beta_Q$ on $Q$ by $\pr_Q^*\beta_{Q'}$, where $\pr_Q:Q\to Q'$ is the projection introduced in Section~\ref{sec52}.

On each $R_i$, 
we set the $1$-form on $g_{R_i}^{-1}(Q)$ by $g_{R_i}^*\beta_Q$,
where $g_{R_i}:\mathcal R_i'\times \{\frac{1}{2}\}\to Q$ is the restriction of $g_{\mathcal R_i}$ to $\mathcal R_i'\times \{\frac{1}{2}\}$.
Note that $g_{R_i}^*\beta_Q$ is induced from the form $x^sdy$ on $U$ for each of neighborhoods of vertices and edges of $S(P)$ in $\partial R_i$.
We then extend it to a $1$-form $\beta_{A_{R_i}}$ on $A_{R_i}$ using $x^sdy$ by the same manner.
Next, we extend the $1$-form $\beta_{A_{R_i}}$ to the whole $R_i$ by using the argument of
Thurston and Winkelnkemper~\cite{TW75}.
For each region $R_{ij}$, choose a volume form $\Omega_{ij}$ on $R_{ij}$ that satisfies 
$\int_{R_{ij}}\Omega_{ij}=\int_{\partial R_{ij}}\beta_{A_{R_i}}$ and
$\Omega_{ij}=d(\beta_{A_{R_i}})$ on $A_{R_i}\cap R_{ij}$. 
Let $\beta'_{R_{ij}}$ be any $1$-form on $R_{ij}$ that equals $\beta_{A_{R_i}}$ on $A_{R_i}\cap R_{ij}$.
By Stokes' theorem, we have
\[
  \int_{R_{ij}}(\Omega_{ij}-d\beta_{R_{ij}}')=\int_{R_{ij}}\Omega_{ij}-\int_{\partial R_{ij}}\beta_{R_{ij}}'
  =\int_{R_{ij}}\Omega_{ij}-\int_{\partial R_{ij}}\beta_{A_{R_i}}=0.
\]
The closed $2$-form $\Omega_{ij}-d\beta_{R_{ij}}'$ represents the trivial class in cohomology vanishing on $A_{R_i}\cap R_{ij}$. 
By de Rham's theorem, there is a $1$-form $\zeta_{i,j}$ on $R_{ij}$ vanishing on $A_{R_i}\cap R_{ij}$ and satisfying $d\zeta_{ij}=\Omega_{ij}-d\beta_{R_{ij}}'$. 
Define $\beta_{R_{ij}}=\beta_{R_{ij}}'+\zeta_{ij}$.
Then $d\beta_{R_{ij}}=\Omega_{ij}$ is a volume form on $R_{ij}$ and $\beta_{R_{ij}}$ 
coincides with $\beta_{A_{R_i}}$ on $A_{R_i}\cap R_{ij}$. 

Gluing  $\beta_{A_{R_i}}$ on $A_{R_i}$ and $\beta_{R_{ij}}$ on $R_{ij}$'s for each $R_i$ and then gluing them with the $1$-forms $\beta_Q$ on $Q$, we obtain a $1$-form $\beta$ on $P$ that satisfies $d\beta>0$ on $P$.

Next we check the condition~(b).
By the construction,  $\hat \beta_E$ and $\hat\beta_F$ on $N_A$
are defined by gluing $1$-forms of type $x^sdy$.
The convergence of $x^sdy$ as $s$ goes to the infinity is faster than that of $d(x^sdy)=sx^{s-1}dx\land dy$ since $0<x<1$.
Hence the inequality on $N_A$ in the condition~(b) is satisfied for a sufficiently large $s$ if the right-hand side is positive.
On $N_A$, the forms $\eta_E$ and $\eta_F$ are either $dt_j^k$ for $k=e$ or $v$, $dt_i$ or those
given in Lemmas~\ref{lemmaA} and~\ref{lemmaD}.
Since $\eta$ is given by the linear sum of $\eta_E$ and $\eta_F$ as in~\eqref{eq33},
Lemma~\ref{lemmaE} and the condition~(a) imply the inequalities $d\hat\beta_E\land\eta>0$ and $d\hat \beta_F\land\eta>0$. This completes the proof.
\end{proof}

For an immersed oriented circle or arc $a$ and an edge $e$ in an oriented graph, set $I(a,e)$ to be the integer that counts how many times $a$ passes through $e$ algebraically.

\begin{lemma}\label{lemma42}
Suppose that real numbers $x_1,\ldots,x_{n_i}$ are assigned to the oriented edges $e_1,\ldots,e_{n_i}$ on $\partial R_i$ such that 
$\sum_{k=1}^{n_i}I(\partial R_i,e_k)x_k>0$. 
Then there exists an assignment of real numbers 
$x'_1,\ldots,x'_{n_i'}$ to the oriented edges $e'_1,\ldots,e'_{n_i'}$ of $G_{R_i}$ such that they satisfy $\sum_{k=1}^{n_i'}I(\partial R_{ij},e'_k)x'_k>0$ for each region $R_{ij}$.
\end{lemma}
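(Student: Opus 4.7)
The plan is to reduce the statement to a discrete cochain extension problem on the cell decomposition of $R_i$ whose $1$-skeleton is $G_{R_i}$, and to exploit the connectedness of its dual graph. I would first separate the edges of $G_{R_i}$ into those lying on $\partial R_i$ and the interior edges of $R_i$. On boundary edges I set $x'_k=x_k$, as forced by the hypothesis, and write $c_j=\sum_{e'_k\subset\partial R_i}I(\partial R_{ij},e'_k)\,x_k$ for the resulting fixed boundary contribution to the face $R_{ij}$. The quantity to be controlled is then
$$
\alpha_j \;:=\; c_j \;+\; \sum_{e'_k \text{ interior}} I(\partial R_{ij}, e'_k)\, x'_k,
$$
and the goal is to choose the interior values so that $\alpha_j>0$ for every $j$.

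The key observation is that, since $R_i$ is orientable, every interior edge of $G_{R_i}$ appears on the boundary of its two adjacent faces with opposite induced orientations; summing over $j$ therefore makes the interior contributions telescope, giving
$$
\sum_j \alpha_j \;=\; \sum_j c_j \;=\; \sum_{k=1}^{n_i} I(\partial R_i, e_k)\, x_k \;>\; 0
$$
by hypothesis, independently of the interior choices. The content of the lemma is thus that this fixed positive total can be redistributed among the faces so that every share is strictly positive.

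To achieve the redistribution I would use a spanning-tree argument on the dual graph $D$ whose vertices are the faces $R_{ij}$ and whose edges are the interior edges of $G_{R_i}$ that separate two distinct faces; an interior edge bounding one and the same face on both sides contributes $0$ to every $\alpha_j$ and can be discarded at the outset. Since $R_i$ is connected, so is $D$; picking a spanning tree $T$, setting $x'_k=0$ on interior edges outside $T$, and then processing leaves of $T$ inward, one solves uniquely for the value on each successive tree edge so that the current leaf face attains the target $\alpha_j=n_i^{-1}\sum_j c_j>0$. The procedure terminates consistently at the root because the telescoping identity above forces the last face to have the correct circulation automatically. The only delicate point is the orientation bookkeeping, namely checking that the unique tree edge incident to a leaf of $T$ enters that face's circulation with coefficient $\pm1$ so that the value can indeed be solved for; this is automatic once the degenerate loop edges have been discarded, since the remaining edges separate two distinct faces by construction.
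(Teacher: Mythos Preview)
Your argument is correct and takes a genuinely different route from the paper's. The paper proceeds by an induction on the construction of the graph: after possibly adding an auxiliary edge to make $G_{R_i}\cup\partial R_i$ connected, it builds the graph from $\partial R_i$ by three elementary moves (subdividing an edge, adding a dangling edge, adding an edge between two existing vertices) and shows how to maintain the positivity at each step --- splitting the value in the first case, assigning anything in the second, and balancing the two new regions in the third.

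Your approach replaces this edge-by-edge induction with a single global step: the telescoping identity $\sum_j\alpha_j=\sum_k I(\partial R_i,e_k)x_k>0$ reduces the problem to redistributing a fixed positive total among the faces, and the spanning-tree-and-leaf-peeling on the dual graph does this in one stroke. This is cleaner and avoids the case analysis; it also does not require $G_{R_i}\cup\partial R_i$ to be connected, only that the dual face graph is connected (which follows directly from the connectedness of $R_i$). The paper's method, on the other hand, is more geometric and makes the subdivision of the boundary edges explicit, whereas you are tacitly absorbing that subdivision into your boundary contributions $c_j$. Both arguments are short; yours is the standard ``flow on a tree'' idiom, while the paper's is closer in spirit to how such weights are built in practice when constructing the $1$-form $\beta$.
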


\begin{proof}
Adding an edge to $G_{R_i}$ if necessary, we may assume that $G_{R_i}\cup\partial R_i$ is connected.
Then, the graph $G_{R_i}$ is obtained from $\partial R_i$ by adding 
\begin{itemize}
\item[(I)] a vertex to the interior of an edge,
\item[(II)] an edge to a vertex (in consequence, a new vertex is added to the other end of the edge),
\item[(III)] an edge connecting two vertices, where the two vertices are possibly the same vertex.
\end{itemize}

We prove the assertion by induction
on the total number of steps (I), (II) and (III) to obtain $G_{R_i}\cup \partial R_i$ from $\partial R_i$.
The assertion is true for the graph on $\partial R_i$ by the assumption.
Suppose that the assertion holds after applying the steps (I), (II) and (III) finite times and consider to apply one of them once more. 
In case (I), if the real number assigned to the edge is $x$ then we assign $x/2$ to each of the two edges separated by the new vertex, where the orientations of these two edges are the one induced from the original edge. This assignment satisfies the assertion.
In case (II), we can assign any real number to the edge since both sides of the added edge belong to the same region.
Hence this assignment satisfies the assertion by the assumption of the induction.

In case (III), the two regions adjacent to the added edge
are different. Denote these two regions by $R'_1$ and $R'_2$. For $k=1,2$, let $x'_k$ be the sum of assigned real numbers along the boundary of $R'_k$ except the added edge.
By the assumption of the induction, we have $x'_1+x'_2>0$.
Set $\delta=(x'_1+x'_2)/2$ and assign the real number $\delta-x'_1$ to the added edge with orientation induced by $R'_1$. Then the sum of real numbers along the boundary of $R'_1$ is $\delta>0$ and that along the boundary of $R'_2$ is 
$x'_2-(\delta-x'_1)=\delta>0$. Thus the assertion holds, and this completes the proof by induction.
\end{proof}

\subsection{Proof of Theorem~\ref{thm1}}

Let $\mathcal C$ be a non-degenerate coordinate system of $P$ and 
$\eta$ be the reference $1$-form of $(P,\mathcal C)$.
Let $\beta$ be a $1$-form on $P$ constructed in Lemma~\ref{lemma41}.
The $1$-form $\hat\beta$ on $N_P$ is defined as explained before Lemma~\ref{lemmaE}.

\begin{lemma}\label{lemma42a}
$\hat\beta\land d\hat\beta=0$, $\hat\beta\land d\eta=0$ and $d\hat\beta\land \eta>0$ on $N_P$.
\end{lemma}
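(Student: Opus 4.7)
The plan is to verify the three identities piecewise on the decomposition $N_P = N_Q \cup \bigcup_i N_{\mathcal R_i}$ using the local coordinates introduced in Section~\ref{sec52}. The guiding observation is that $\hat\beta$ should be \emph{horizontal}: in the local coordinates its coefficients depend only on the two non-flow coordinates ($(x_j^k, y_j^k)$ on each $B_j^k \subset N_Q$ and $(r_i, \theta_i)$ on $N_{\mathcal R_i}$), and it has no $dt$-component. On $N_Q$ this is immediate from $\hat\beta = \pr^*\beta$, since $\pr$ projects out the flow direction. On $N_{\mathcal R_i}$, the projection $\pr_{\mathcal R_i}$ ignores $t_i$, and the coordinate formulas (e-01)--(v-W) together with~\eqref{eq30} show that $\pr \circ g_{\mathcal R_i}$ is likewise independent of $t_i$; since $\sigma(r_i)$ depends only on $r_i$, the combination~\eqref{eq41} inherits horizontality. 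Consequently, $d\hat\beta$ is on each piece a function multiple of $dx_j^k \wedge dy_j^k$ or $dr_i \wedge d\theta_i$.

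Given this, I expect the first two identities to fall out formally. For $\hat\beta \wedge d\hat\beta = 0$, both factors live in a $2$-dimensional subbundle, so their wedge is a $3$-form in only two coordinates and hence vanishes. For $\hat\beta \wedge d\eta = 0$, Lemma~\ref{lemma31} gives $d\eta = 0$ outside $\bigcup_{i,j} W_{i,j}$ and $d\eta$ a multiple of $dr_i \wedge d\theta_i$ on each $W_{i,j} \subset N_{\mathcal R_i}$; wedging with the horizontal $\hat\beta$ again produces a $3$-form in only two variables.

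The real content is the strict inequality $d\hat\beta \wedge \eta > 0$. Any non-flow component of $\eta$ wedges with the already $2$-dimensional $d\hat\beta$ to zero, so only the $dt$-component of $\eta$ contributes. On $N_Q$ the computation is immediate: $d\hat\beta \wedge \eta = \pr^*d\beta \wedge dt_j^k$ is positive because $d\beta > 0$ on $P$ by condition~(a) of Lemma~\ref{lemma41}, because $\pr$ is orientation-preserving onto $Q'$, and because $(x_j^k, y_j^k, t_j^k)$ is the positive coordinate system on $B_j^k$.

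The hard part will be the $N_{\mathcal R_i}$ case, because of the interpolation~\eqref{eq41}. My plan is to show that the two $1$-forms $(\pr \circ g_{\mathcal R_i})^*\beta$ and $\pr_{\mathcal R_i}^*\beta$ actually coincide on the overlap $N_{\mathcal R_i'}$. Both are independent of $t_i$ by the horizontality observation above, so it suffices to compare them on the slice $t_i = 1/2$; there they agree, since $\beta|_{\mathcal R_i'} = g_{R_i}^*\beta_Q$ by the construction in Section~\ref{sec61} and $g_{R_i}$ is precisely $g_{\mathcal R_i}$ restricted to this slice. Once the two summands are identified, $\hat\beta = \pr_{\mathcal R_i}^*\beta$ on all of $N_{\mathcal R_i}$, so $d\hat\beta \wedge dt_i = \pr_{\mathcal R_i}^*d\beta \wedge dt_i > 0$ using $d\beta > 0$ on $\mathcal R_i$ and the orientation convention $dr_i \wedge d\theta_i \wedge dt_i > 0$. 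Everything else is routine bookkeeping with orientations.
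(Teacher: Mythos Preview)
Your proof is correct and follows essentially the same route as the paper: horizontality of $\hat\beta$ (the content of Lemma~\ref{lemmaE}) handles the first two identities, while $d\beta>0$ from Lemma~\ref{lemma41}(a) combined with the form of $\eta$ in~\eqref{eq31} handles the third. Your explicit verification that the two summands in~\eqref{eq41} agree on $N_{\mathcal R_i'}$---using that $\beta|_{\mathcal R_i'}=g_{R_i}^*\beta_Q$ by the construction in Section~\ref{sec61}---is a useful addition: it makes the step ``$d\hat\beta$ is a positive multiple of $dr_i\wedge d\theta_i$ on $N_{\mathcal R_i}$'' transparent, whereas the paper's one-line proof simply cites~\eqref{eq31} and Lemma~\ref{lemma41}(a) and leaves the reader to unpack why the interpolation in~\eqref{eq41} introduces no cross term.
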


\begin{proof}
Lemma~\ref{lemmaE} implies $\hat\beta\land d\hat\beta=0$,
Lemma~\ref{lemma31} implies $\hat\beta\land d\eta=0$ on $N_P\setminus\bigcup_{i,j} W_{i,j}$,
and Lemmas~\ref{lemmaE} and~\ref{lemma31} imply $\hat\beta\land d\eta=0$ on $W_{i,j}$.
The last assertion follows from the linear sum \eqref{eq31} with
$\eta_{\mathcal R_i'}=g^*_{\mathcal R_i}(dt_j^k)$ and Lemma~\ref{lemma41}~(a). 
\end{proof}

\begin{lemma}\label{lemma43}
The $1$-form $\alpha_{P,R}=\hat \beta+R\eta$ is a contact form on $N_P$ whose Reeb vector field is positively transverse to $P$ 
for any $R>0$.
\end{lemma}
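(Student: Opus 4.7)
The plan is to verify both claims by direct computation, relying on the structural lemmas for $\hat\beta$ and $\eta$ established earlier. For the contact condition, I would expand
\[
\alpha_{P,R}\wedge d\alpha_{P,R}
= \hat\beta\wedge d\hat\beta
+ R\,\hat\beta\wedge d\eta
+ R\,d\hat\beta\wedge\eta
+ R^2\,\eta\wedge d\eta,
\]
and then apply Lemma~\ref{lemma42a} to annihilate the first two terms and to give $d\hat\beta\wedge\eta>0$ on $N_P$, together with Lemma~\ref{lemma32} to give $\eta\wedge d\eta\geq 0$ on $N_P$. It follows that $\alpha_{P,R}\wedge d\alpha_{P,R}\geq R\,d\hat\beta\wedge\eta>0$ for every $R>0$, which is the contact condition on $N_P$.

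For the Reeb vector field, the key observation is that, in each of the local coordinate charts introduced in Section~5, both $\hat\beta$ and $\eta$ are independent of the local $t$-variable: for $\hat\beta$ this is Lemma~\ref{lemmaE}, and for $\eta$ it is visible from the defining formulas $\eta=dt_j^k$ on $B_j^k$ outside the wedges $W_{i,j}$, and $\eta=dt_i+\sigma(r_i)\,\frac{dh_{i,j}}{d\theta_i}(\theta_i)\,d\theta_i$ on $W_{i,j}$, via Lemma~\ref{lemmaB}. Consequently $d\alpha_{P,R}$ contains no factor of $dt$, so $d\alpha_{P,R}(\partial/\partial t,\cdot)=0$, while $\alpha_{P,R}(\partial/\partial t)=R$ because $\hat\beta$ has no $dt$-component and $\eta$ has $dt$-component equal to $1$. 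Hence the Reeb field is $R^{-1}\,\partial/\partial t$ locally, and I would verify that these local expressions patch to a global vector field on $N_P$ by chasing the gluing conditions (e-01)--(v-W) and the formula \eqref{eq30}, each of which stipulates that $t_j^k$ equals $t_i$ plus a quantity independent of $t_i$, so $\partial/\partial t_i=\partial/\partial t_j^k$ on overlaps.

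Positive transversality of the Reeb field to $P$ is then a matter of unpacking the coordinate descriptions: $P\cap N_{\mathcal R_i}$ equals $\mathcal R_i\times\{\tfrac{1}{2}\}$, a level set of $t_i$, and in each $B_j^k$ every region of $P$ is a graph of the form $t_j^k=(\text{function of the remaining two coordinates})$; in both cases $\partial/\partial t$ is nowhere tangent to $P$, and the orientations on the regions fixed in Section~5 are set up so that increasing $t$ is the positive normal direction. The main obstacle I anticipate is the global patching for the Reeb field, since one must check that the various $t_j^k$ and $t_i$ coordinates stitch together consistently across every pair of adjacent blocks; but this is exactly what the conditions (e-01)--(v-W) on the gluing maps were designed to ensure, so the verification is essentially bookkeeping rather than a genuine difficulty.
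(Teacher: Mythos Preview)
Your argument is correct. For the contact condition it coincides with the paper's: the paper also expands $\alpha_{P,R}\wedge d\alpha_{P,R}$ on $W_{i,j}$ and invokes Lemma~\ref{lemma42a} together with $\eta\wedge d\eta\geq 0$; on the remaining pieces the paper just specializes to $\eta=dt_j^k$ or $\eta=dt_i$, which is a subcase of your global computation.

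For the Reeb field you take a slightly different and more direct route. On $W_{i,j}$ the paper does \emph{not} compute the Reeb vector explicitly; instead it uses the identity $\alpha_{P,R}\wedge d\alpha_{P,R}\bigl(R_{\alpha_{P,R}},\partial/\partial r_i,\partial/\partial\theta_i\bigr)=d\alpha_{P,R}(\partial/\partial r_i,\partial/\partial\theta_i)$ and checks the right-hand side is positive, which yields transversality without identifying $R_{\alpha_{P,R}}$. Your observation that on $W_{i,j}$ both $\hat\beta$ and $\eta$ have coefficients independent of $t_i$ (and $\hat\beta$ has no $dt_i$-component) gives $d\alpha_{P,R}\in\langle dr_i\wedge d\theta_i\rangle$ and $\alpha_{P,R}(\partial/\partial t_i)=R$, so $R_{\alpha_{P,R}}=R^{-1}\partial/\partial t_i$ there as well. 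This is cleaner and makes the global patching (via the gluing conditions (e-01)--(v-W) and \eqref{eq30}, all of which have the form $t_j^k=t_i+\text{(function of }r_i,\theta_i\text{)}$) completely transparent. The paper's approach has the minor advantage of not needing to track the $dt$-component of $d\alpha_{P,R}$, but yours gives strictly more information.
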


\begin{proof}
The $1$-form $\alpha_{P,R}$ is a contact form on $B^k_j\setminus \bigcup_i g_{\mathcal R_i}(N_{\mathcal R_i})$, 
for $k=e$ or $v$, 
since the form is given as $\alpha_{P,R}=\hat\beta+Rdt^k_j$
and $d\hat\beta\land dt_j^k>0$ holds by Lemma~\ref{lemma41}~(a).
It is also a contact form on $N_{\mathcal R_i}\setminus\bigcup_jW_{i,j}$ since the form is given as
$\alpha_{P,R}=\hat\beta+Rdt_i$ by Lemma~\ref{lemmaB} and $d\hat\beta\land dt_i>0$ by  Lemma~\ref{lemma41}~(a).
On $W_{i,j}$, the $3$-form $\alpha_{P,R}\land d\alpha_{P,R}$ is given as
\[
\alpha_{P,R}\land d\alpha_{P,R}= \hat\beta\land d\hat\beta+R\eta\land d\hat\beta+R\hat\beta\land d\eta+R^2\eta\land d\eta,
\]
which is positive by Lemma~\ref{lemma42a} and $\eta\land d\eta\geq 0$ in Lemma~\ref{lemma32}.

Next we check that 
the Reeb vector field $R_{\alpha_{P,R}}$ of $\alpha_{P,R}$ is transverse to $P$.
On $B_j^k\setminus \bigcup_i g_{\mathcal R_i}(N_{\mathcal R_i})$, 
since $\beta$ is defined from the $1$-form $x^sdy$ on $\Real^2$, 
the Reeb vector field of $\alpha_{P,R}$ is given as $\frac{\partial}{\partial t_j^k}$.
Hence the assertion holds.

On $N_{\mathcal R_i}\setminus \bigcup_{j}W_{i,j}$, $\alpha_{P,R}=\hat\beta+Rdt_i$ by Lemma~\ref{lemmaB} and hence $R_{\alpha_{P,R}}=\frac{1}{R}\frac{\partial}{\partial t_i}$ by Lemma~\ref{lemmaE}.
Thus the assertion holds.

On $W_{i,j}$, by Lemma~\ref{lemma31}, 
\[
\begin{split}
  \alpha_{P,R}\land d\alpha_{P,R}\left(R_{\alpha_{P,R}},\frac{\partial}{\partial r_i},\frac{\partial}{\partial \theta_i}\right) 
  &=\left(d\hat\beta+R\frac{d\sigma}{dr_i}(r_i)\frac{dh_{i,j}}{d\theta_i}(\theta_i)dr_i\land d\theta_i \right)
\left(\frac{\partial}{\partial r_i},\frac{\partial}{\partial \theta_i}\right) \\
  &=d\hat\beta\left(\frac{\partial}{\partial r_i},\frac{\partial}{\partial \theta_i}\right)
   +R\frac{d\sigma}{dr_i}(r_i)\frac{dh_{i,j}}{d\theta_i}(\theta_i).
\end{split}
\]
The right-hand side is positive since $d\beta>0$ by  Lemma~\ref{lemma41}~(a) and $\sigma$ and $h_{i,j}$ are monotone increasing.
Since $\alpha_{P,R}$ is a contact form, $R_{\alpha_{P,R}}$ is positive with respect to 
$\left\langle\frac{\partial}{\partial r_i},\frac{\partial}{\partial \theta_i}\right\rangle$.
This means that $R_{\alpha_{P,R}}$ is positively transverse to $P$ on $W_{i,j}$.
\end{proof}

\begin{proof}[Proof of Theorem~\ref{thm1}]
Let $\hat\beta_E$ and $\hat\beta_F$ be the $1$-forms on $N_D$ obtained from 
the $1$-form $g_D^*\hat\beta$ on $E$ and $F$, respectively, by extending it to the whole $N_D$ as we introduced before Lemma~\ref{lemmaE}.
Define the $1$-form $\alpha_R$ on $N_D=D^2\times [0,1]$ as
\begin{equation}\label{eq42}
   \alpha_R=(1-\tau(w))\hat\beta_E+\tau(w)\hat\beta_F+R\eta,
\end{equation}
where $R>0$.
This coincides with $\alpha_{P,R}$ on $g_D^{-1}(N_P)$.
To show that $\alpha_R$ is a contact form on $M$, by Lemma~\ref{lemma43},
it is enough to show that $\alpha_R$ is a contact form on $N_D$.
The $3$-form $\alpha_R\land d\alpha_R$ is calculated as
\[
\begin{split}
   \alpha_R\land d\alpha_R
=&-\frac{d\tau}{dw}(w)\hat\beta_E\land \hat\beta_F\land dw
+R\frac{d\tau}{dw}(w)(-\hat\beta_E+\hat\beta_F)\land \eta \land dw \\
&+R((1-\tau(w))\hat\beta_E+\tau(w)\hat\beta_F)\land d\eta \\
&+R((1-\tau(w)) d\hat\beta_E + \tau(w) d\hat\beta_F)\land\eta +R^2\eta\land d\eta,
\end{split}
\]
where we used $\hat\beta_E\land d\hat\beta_E=\hat\beta_F\land d\hat\beta_F=\hat\beta_E\land d\hat\beta_F=\hat\beta_F\land d\hat\beta_E=0$
since $\hat\beta_E$ and $\hat\beta_F$ depend only on the coordinates $(x,y)$ of each local coordinate chart on $\pr N_A$ and only on $(r_i,\theta_i)$ on each $N_{\mathcal R_i}$ as in Lemma~\ref{lemmaE}.

On $N_A$, since $\beta$ satisfies the condition~(b) in Lemma~\ref{lemma41},
the absolute value of the sum of the second and third terms divided by $R$ is bounded above as
\[
\begin{split}
   |B_E+B_F|&\leq |B_E|+|B_F|<\min\left\{d\hat\beta_E\land \eta,
\;\,d\hat\beta_F\land \eta
\right\} \\
   &\leq((1-\tau(w)) d\hat\beta_E + \tau(w) d\hat\beta_F)\land\eta.
\end{split}
\]
This inequality and Lemma~\ref{lemma35} show that
$\alpha_R\land d\alpha_R$ is positive on $N_A$ for a sufficiently large $R$. 
On $N_D\setminus N_A$, we have $d\eta=0$ and $dw\land \eta=0$ by Lemma~\ref{lemmaC}~(1).
We also have $d\hat\beta_E\land \eta>0$ and $d\hat\beta_F\land \eta>0$ by Lemma~\ref{lemmaC}~(1) and Lemma~\ref{lemma41}~(a)
and $\eta\land d\eta\geq 0$ by Lemma~\ref{lemma35}.
Thus $\alpha_R\land d\alpha_R$ is positive on $N_D\setminus N_A$ for a sufficiently large $R>0$.
Thus $\alpha_R$ is a contact form on $N_D$ for a sufficiently large $R$.

Next we check that the Reeb flow of $\alpha_R$ is carried by $P$.
It had been checked on $N_P$ in Lemma~\ref{lemma43}.
Let $R_{\alpha_R}$ be the Reeb vector field of $\alpha_R$.
On $N_D$, with coordinates $(u,v,w)$ of $N_D$,
\[
\begin{split}
\alpha_R\land d\alpha_R\left(R_{\alpha_R}, \frac{\partial}{\partial u}, \frac{\partial}{\partial v}\right)
=&d((1-\tau(w))\hat\beta_E+\tau(w)\hat\beta_F+R\eta)\left(\frac{\partial}{\partial u}, \frac{\partial}{\partial v}\right)\\
=&((1-\tau(w))d\hat\beta_E+\tau(w) d\hat\beta_F\\
&+R((1-\tau(w))d\eta_E+\tau(w) d\eta_F))\left(\frac{\partial}{\partial u}, \frac{\partial}{\partial v}\right),
\end{split}
\]
where we used $dw(\frac{\partial}{\partial u})=dw(\frac{\partial}{\partial v})=0$.
By Lemma~\ref{lemma41}~(a), we have $((1-\tau(w))d\hat\beta_E+\tau(w) d\hat\beta_F)\left(\frac{\partial}{\partial u}, \frac{\partial}{\partial v}\right)>0$.

On  $N_D\setminus (\hat W\cup \check W)$, the term $((1-\tau(w))d\eta_E+\tau(w) d\eta_F))\left(\frac{\partial}{\partial u}, \frac{\partial}{\partial v}\right)$ vanishes by Lemma~\ref{lemmaC}~(1) and~(2) and hence
$\alpha_R\land d\alpha_R\left(R_{\alpha_R}, \frac{\partial}{\partial u}, \frac{\partial}{\partial v}\right)>0$.

On $\hat W_{i,j}$, 
we have $((1-\tau(w))d\eta_E+\tau(w) d\eta_F))\left(\frac{\partial}{\partial u}, \frac{\partial}{\partial v}\right)\geq 0$
since 
\[
d\eta_E\left(\frac{\partial}{\partial u}, \frac{\partial}{\partial v}\right)=
g_E^*\left(\frac{d\sigma}{dr_i}(r_i)\frac{dh_{i,j}}{d\theta_i}(\theta_i)dr_i\land d\theta_i\right)\left(\frac{\partial}{\partial u}, \frac{\partial}{\partial v}\right)\geq 0
\]
on $E\cap \hat W_{i,j}$ by Lemma~\ref{lemmaA}~(2)
and $d\eta_F\left(\frac{\partial}{\partial u}, \frac{\partial}{\partial v}\right)=0$ on $F\cap \hat W_{i,j}$ by Lemma~\ref{lemmaA}~(3).

On $\check W_{i,j}$, 
we also have $((1-\tau(w))d\eta_E+\tau(w) d\eta_F))\left(\frac{\partial}{\partial u}, \frac{\partial}{\partial v}\right)\geq 0$
since $d\eta_E\left(\frac{\partial}{\partial u}, \frac{\partial}{\partial v}\right)=0$ on $E\cap \check W_{i,j}$ by Lemma~\ref{lemmaD}~(2) and 
\[
d\eta_F\left(\frac{\partial}{\partial u}, \frac{\partial}{\partial v}\right)=
g_F^*\left(\frac{d\sigma}{dr_i}(r_i)\frac{dh_{i,j}}{d\theta_i}(\theta_i)dr_i\land d\theta_i\right)\left(\frac{\partial}{\partial u}, \frac{\partial}{\partial v}\right)\geq 0
\]
on $F\cap \check W_{i,j}$ by Lemma~\ref{lemmaD}~(3).

Thus $\alpha_R\land d\alpha_R\left(R_{\alpha_R}, \frac{\partial}{\partial u}, \frac{\partial}{\partial v}\right)>0$ holds on the whole $N_D$.
Since $\alpha_R$ is a positive contact form, $R_\alpha$ is positively transverse to
$\left\langle\frac{\partial}{\partial u},\frac{\partial}{\partial v}\right\rangle$
on $N_D$.
This completes the proof.
\end{proof}

\section{Uniqueness}

In this section, we prove the following theorem, which is the uniqueness asserted in Theorem~\ref{thm01}.

\begin{theorem}\label{thm2}
Let $P$ be a positive flow-spine of $M$
and $\alpha_0$ and $\alpha_1$ be contact forms on $M$ whose 
%Reeb vector fields are flows of $P$.
Reeb flows are carried by $P$.
Then the contact structures $\ker\alpha_0$ and $\ker\alpha_1$ are isotopic.
% contactomorphic.
\end{theorem}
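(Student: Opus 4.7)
\emph{Plan.} My goal is to exhibit a continuous path of contact forms on $M$ joining $\alpha_0$ and $\alpha_1$; Gray's theorem will then provide the desired isotopy between $\ker\alpha_0$ and $\ker\alpha_1$. The strategy follows the pattern of Giroux's uniqueness argument for open-book-supported contact structures, with the reference $1$-form $\eta=\eta(P,\mathcal C)$ from Section~5 playing the role of $dt$; the positivity of $P$ gives the crucial inequality $\eta\wedge d\eta\geq 0$, which replaces the closedness of $dt$ in the open-book setting.

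The first step is a normalization. Fix a non-degenerate coordinate system $\mathcal C$ and, for $i=0,1$, set $\beta_i=\alpha_i|_P$; the hypothesis that the Reeb flow of $\alpha_i$ is carried by $P$ forces $d\beta_i>0$ on each region of $P$. The construction of Section~6 then produces, for each sufficiently large $R_0>0$, a contact form $A_i$ on $M$ (equal to $\hat\beta_i+R_0\eta$ on $N_P$ and to $(1-\tau(w))\hat\beta_{i,E}+\tau(w)\hat\beta_{i,F}+R_0\eta$ on $N_D$), whose Reeb flow is also carried by $P$. Because $\eta|_P=0$ in the local models of Section~5, both $\alpha_i$ and $A_i$ restrict to $\beta_i$ on $P$, so the straight-line family $\alpha_i^{(s)}=(1-s)\alpha_i+sA_i$ also restricts to $\beta_i$ on $P$ for every $s\in[0,1]$. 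Combining $d\beta_i>0$ on each region with $\eta\wedge d\eta\geq 0$ and the estimates appearing in the proof of Theorem~\ref{thm1}, one shows $\alpha_i^{(s)}\wedge d\alpha_i^{(s)}>0$ pointwise for all $s$, provided $R_0$ is chosen large enough.

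The second step interpolates between $A_0$ and $A_1$. The family $\beta_s=(1-s)\beta_0+s\beta_1$ still satisfies $d\beta_s>0$ on each region of $P$ by convexity, so the argument of Section~6 applied uniformly in $s$ shows that the corresponding family of contact forms $A_s$ is a genuine path of contact forms for a single, sufficiently large $R_0$, varying continuously with $s$. Concatenating the three pieces $\alpha_0\rightsquigarrow A_0\rightsquigarrow A_1\rightsquigarrow \alpha_1$, smoothing at the corners, and applying Gray's theorem then yields the desired isotopy of contact structures.

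The hard part is the normalization step, i.e.\ verifying that $\alpha_i^{(s)}$ is contact throughout $s\in[0,1]$. This reduces to bounding the cross-terms $\alpha_i\wedge dA_i$ and $A_i\wedge d\alpha_i$ in the twisted regions $W_{i,j}$ and $N_A$, where $\eta$ is not closed and the signs of these terms are not a priori clear. Unlike in the existence proof, where $\beta$ could be chosen with the freedom granted by Lemma~\ref{lemma41}, the trace $\beta_i=\alpha_i|_P$ is now forced by $\alpha_i$, so the estimates must be made with the given $\alpha_i$. I expect to handle this by first performing a small preliminary contact isotopy of $\alpha_i$ to bring it into a tighter standard shape in a neighborhood of $S(P)$ and in the complement $N_D$; once such a shape is achieved, the estimates of Section~6 carry over to control the quadratics in $s$ and the argument goes through.
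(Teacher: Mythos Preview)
Your plan has a genuine gap at the normalization step, and the difficulty you flag there is not a technicality but the heart of the theorem.

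First, the construction of $A_i$ from $\beta_i=\alpha_i|_P$ is not guaranteed to produce a contact form on all of $M$. Lemma~\ref{lemma43} only gives you a contact form on $N_P$; to extend over $N_D$ the proof of Theorem~\ref{thm1} uses condition~(b) of Lemma~\ref{lemma41}, which is obtained by exploiting the freedom in choosing $\beta$ (the parameter $s$ in $x^sdy$). Your $\beta_i$ is forced on you by $\alpha_i$ and there is no reason it satisfies~(b), so you cannot invoke the Section~6 argument as a black box. (Incidentally, $\eta|_P$ is not identically zero near $S(P)$, since for instance the sheet $H^e_{j,1}$ sits at $t^e_j=h(y^e_j)$, so your claim that $\alpha_i$ and $A_i$ have the same trace on $P$ also needs care.)

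Second, and more seriously, the straight-line homotopy $\alpha_i^{(s)}=(1-s)\alpha_i+sA_i$ has no reason to stay contact. Your proposed fix---a ``preliminary contact isotopy'' bringing $\alpha_i$ into a standard shape near $S(P)$ and in $N_D$---is essentially what the theorem asserts, so invoking it is circular. The paper avoids this entirely by a different mechanism: it isolates the linear condition $\alpha\wedge d\eta+d\alpha\wedge\eta>0$ (Lemma~\ref{lemma53}), under which $\alpha+R\eta$ is contact for all $R\ge 0$ and linear interpolation at large $R$ works. The crucial point is then Lemma~\ref{lemma52}: given an \emph{arbitrary} $\alpha$ whose Reeb flow is carried by $P$, one does not try to deform $\alpha$ toward a fixed $\eta$, but instead \emph{chooses the coordinate system $\mathcal C$ (and hence $\eta$) adapted to $\alpha$} so that the inequality holds. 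This requires aligning the vertex boxes with the Reeb flow of $\alpha$ and arranging $\alpha(\partial/\partial x_j^v)<0$ at each vertex, possibly after a small isotopy of $P$ (Figure~\ref{fig8-2}). Since $\alpha_0$ and $\alpha_1$ then come with different reference forms $\eta_0,\eta_1$, a further argument (Lemma~\ref{lemma55}) interpolates between coordinate systems through a one-parameter family, handling the finitely many degenerate times by perturbation. Your outline fixes a single $\mathcal C$ at the outset and thereby loses exactly the flexibility that makes the argument work.
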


\begin{lemma}\label{lemma53}
%Let $\eta$ be a reference $1$-form of $(P,g_D)$, 
%where $P$ is a positive flow-spine and $g_D$ is a nondegenerate gluing map
%and, for 
Let $\mathcal C$ be a non-degenerate coordinate system $\mathcal C$ of $M$ with respect to a positive flow-spine $P$.
Let $\eta$ be the reference $1$-form of $(P,\mathcal C)$.
For $i=0,1$, let $\alpha_i$ be a contact form on $M$ satisfying 
$\alpha_i\land d\eta+d\alpha_i\land\eta>0$. 
Then there exists a one-parameter family of contact forms connecting $\alpha_0$ and $\alpha_1$.
\end{lemma}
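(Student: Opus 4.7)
The plan is to connect $\alpha_0$ and $\alpha_1$ through a path that first detours via $\alpha_0+R\eta$ and $\alpha_1+R\eta$ for a sufficiently large constant $R>0$, exploiting that the reference form $\eta$ satisfies $\eta\land d\eta\geq 0$ by Lemma~\ref{lemma35}. Explicitly, I would concatenate the three one-parameter families
\[
\alpha^{(1)}_s=\alpha_0+sR\eta,\quad \alpha^{(2)}_s=(1-s)\alpha_0+s\alpha_1+R\eta,\quad \alpha^{(3)}_s=\alpha_1+(1-s)R\eta,
\]
each for $s\in[0,1]$, matched at the junctions since $\alpha^{(1)}_1=\alpha^{(2)}_0=\alpha_0+R\eta$ and $\alpha^{(2)}_1=\alpha^{(3)}_0=\alpha_1+R\eta$, and then reparametrize by a smooth non-decreasing function so that the concatenation becomes a smooth path. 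The whole task then reduces to verifying that each of the three segments consists of contact forms for a common choice of $R$.

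For $\alpha^{(1)}_s$, a direct computation gives
\[
\alpha^{(1)}_s\land d\alpha^{(1)}_s=\alpha_0\land d\alpha_0+sR\bigl(\alpha_0\land d\eta+d\alpha_0\land\eta\bigr)+s^2R^2\,\eta\land d\eta,
\]
which is a sum of a strictly positive volume form and two non-negative terms by the hypothesis and Lemma~\ref{lemma35}, hence strictly positive for every $s\in[0,1]$ and every $R>0$. The family $\alpha^{(3)}_s$ is handled in exactly the same way, with the roles of $s$ and $\alpha_0$ replaced by $1-s$ and $\alpha_1$, so neither of the outer segments places any constraint on $R$.

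For the middle segment, setting $\beta_s=(1-s)\alpha_0+s\alpha_1$, one obtains
\[
\alpha^{(2)}_s\land d\alpha^{(2)}_s=\beta_s\land d\beta_s+R\bigl(\beta_s\land d\eta+d\beta_s\land\eta\bigr)+R^2\,\eta\land d\eta.
\]
The coefficient of $R$ is a convex combination of $\alpha_0\land d\eta+d\alpha_0\land\eta$ and $\alpha_1\land d\eta+d\alpha_1\land\eta$, both strictly positive by hypothesis, so by compactness of $M$ it is bounded below on $M\times[0,1]$ by a positive multiple of any fixed volume form, while $\beta_s\land d\beta_s$ is uniformly bounded in absolute value. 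Choosing $R$ large enough thus makes $\alpha^{(2)}_s\land d\alpha^{(2)}_s>0$ uniformly in $s$.

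The main obstacle is precisely the middle segment: the straight-line interpolation $\beta_s$ between two contact forms need not be contact, and this is what the large $R\eta$ correction is designed to absorb. The essential inputs are the hypothesis $\alpha_i\land d\eta+d\alpha_i\land\eta>0$ (which provides a uniform positive lower bound on the dominant $R$-linear term), compactness of $M$ (which turns pointwise positivity into a uniform one), and Lemma~\ref{lemma35} (which ensures that the $R^2$ term never contributes negatively). Once $R$ is fixed large enough for the middle segment, the same $R$ works for the outer segments automatically, and the concatenated smooth path provides the desired one-parameter family of contact forms from $\alpha_0$ to $\alpha_1$.
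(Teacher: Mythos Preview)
Your proposal is correct and follows essentially the same approach as the paper's own proof: both use the three-segment path $\alpha_0\leadsto\alpha_0+R\eta\leadsto\alpha_1+R\eta\leadsto\alpha_1$, verifying that the outer legs are contact for all $R\geq 0$ while the middle leg becomes contact for $R$ sufficiently large, using exactly the hypothesis for the $R$-linear term and Lemma~\ref{lemma35} for the $R^2$ term. Your write-up is slightly more explicit about the concatenation and the convex-combination structure of the $R$-coefficient, but the argument is the same.
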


\begin{proof}
For each $\alpha_i$, set $\alpha_{R,i}=\alpha_i+R\eta$ for $R\geq 0$. Then
\[
   \alpha_{R,i}\land d\alpha_{R,i}=\alpha_i\land d\alpha_i+R(\alpha_i\land d\eta+d\alpha_i\land\eta)+R^2\eta\land d\eta.
\]
The assumption $\alpha_i\land d\eta+d\alpha_i\land\eta>0$ and 
the inequality $\eta\land d\eta\geq 0$ in Lemma~\ref{lemma35} imply that 
$\alpha_{R,i}$ is a contact form for any $R\geq 0$.

Set $\alpha_{R,s}=(1-s)\alpha_0+s\alpha_1+R\eta$ for $s\in[0,1]$. Then
\[
\begin{split}
\alpha_{R,s}\land d\alpha_{R,s}=&(1-s)^2\alpha_0\land d\alpha_0
+s(1-s)(\alpha_0\land d\alpha_1+\alpha_1\land d\alpha_0)
+s^2\alpha_1\land d\alpha_1 \\
&+R\{((1-s)d\alpha_0+sd\alpha_1)\land \eta+((1-s)\alpha_0+s\alpha_1)\land d\eta\} \\
&+R^2\eta\land d\eta.
\end{split}
\]
Since the fourth term (having the coefficient $R$) is positive by the assumption 
and the last term is non-negative by Lemma~\ref{lemma35},
this $3$-form is positive for any $s\in [0,1]$ if $R$ is sufficiently large.
Combining this with the observation in the first paragraph, we have the assertion.
\end{proof}

\begin{lemma}\label{lemma54}
Let $\mathcal C$ be a non-degenerate coordinate system of $M$ with respect to a positive flow-spine $P$ and let $\eta$ be the reference $1$-form of $(P,\mathcal C)$.
%Let $\eta$ be a reference $1$-form of $(P,g_D)$ with semi-nondegenerate gluing map $g_D$.
Then there exists a contact form $\alpha$ 
such that $\alpha\land d\eta+d\alpha\land\eta>0$ and 
%its Reeb vector field is a flow of $P$.
its Reeb flow is carried by $P$.
\end{lemma}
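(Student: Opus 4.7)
The plan is to take $\alpha = \alpha_R = \hat\beta_R + R\eta$ to be exactly the contact form constructed in Theorem~\ref{thm1}, with $\beta$ the $1$-form supplied by Lemma~\ref{lemma41} and $R$ sufficiently large. Theorem~\ref{thm1} already guarantees that such an $\alpha$ is a contact form whose Reeb flow is carried by $P$, so the only remaining point is to verify the inequality $\alpha \land d\eta + d\alpha \land \eta > 0$. A direct calculation, using that $d\eta \land \eta = \eta \land d\eta$ since the two forms have degrees $(2,1)$, yields
\[
\alpha \land d\eta + d\alpha \land \eta = \hat\beta_R \land d\eta + d\hat\beta_R \land \eta + 2R\,\eta \land d\eta.
\]
Since $\eta \land d\eta \geq 0$ everywhere by Lemma~\ref{lemma35}, the last summand is non-negative and contributes strictly positively where $\eta \land d\eta > 0$; it therefore suffices to verify $\hat\beta_R \land d\eta + d\hat\beta_R \land \eta > 0$ on the locus $\{\eta \land d\eta = 0\}$.

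I would verify this using the standard decomposition of $M$ into the pieces $N_P \setminus \bigcup_{i,j} W_{i,j}$, the $W_{i,j}$'s, $N_D \setminus N_A$, and $N_A$. On $N_P \setminus \bigcup_{i,j} W_{i,j}$, Lemma~\ref{lemma31} gives $d\eta = 0$, so $\hat\beta \land d\eta = 0$, and Lemma~\ref{lemma42a} gives $d\hat\beta \land \eta > 0$. On each $W_{i,j}$, one again has $\hat\beta \land d\eta = 0$ and $d\hat\beta \land \eta > 0$ by Lemma~\ref{lemma42a}. On $N_D \setminus N_A$, Lemma~\ref{lemmaC}(1) gives $\eta = dw$ and $d\eta = 0$, and $d\hat\beta_R \land \eta = (1-\tau)\, d\hat\beta_E \land dw + \tau\, d\hat\beta_F \land dw$ is positive because $d\beta > 0$ on $P$ by Lemma~\ref{lemma41}(a).

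The main obstacle is the piece $N_A$, where $d\eta$ need not vanish. On $N_A$ I would expand
\[
\hat\beta_R \land d\eta + d\hat\beta_R \land \eta = (1-\tau)(\hat\beta_E \land d\eta + d\hat\beta_E \land \eta) + \tau(\hat\beta_F \land d\eta + d\hat\beta_F \land \eta) + \tau'\, dw \land (\hat\beta_F - \hat\beta_E) \land \eta
\]
and argue, following the same pattern as in the proof of Theorem~\ref{thm1}, that the cross term together with the terms $\hat\beta_E \land d\eta$ and $\hat\beta_F \land d\eta$ are dominated by $(1-\tau)\, d\hat\beta_E \land \eta + \tau\, d\hat\beta_F \land \eta$ via condition~(b) of Lemma~\ref{lemma41}. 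If the bound provided there is not already strong enough on its own, I would strengthen~(b) by increasing the exponent $s$ in the local $1$-form $x^s\, dy$ from which $\beta$ is built: then $|\hat\beta_E|$ and $|\hat\beta_F|$ become arbitrarily small relative to $d\hat\beta \land \eta$ (the latter acquires the factor $sx^{s-1}$, whose ratio $s/x$ to $x^s$ tends to $\infty$), so every error term on $N_A$ can be absorbed. Combined with the non-negativity of $2R\,\eta \land d\eta$ and a sufficiently large choice of $R$, this yields $\alpha \land d\eta + d\alpha \land \eta > 0$ on all of $M$.
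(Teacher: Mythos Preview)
Your proof is correct and follows essentially the same route as the paper's: take $\alpha=\alpha_R$ from Theorem~\ref{thm1}, expand $\alpha\land d\eta+d\alpha\land\eta$, and check positivity piece by piece using Lemmas~\ref{lemma42a}, \ref{lemmaC}, \ref{lemma41}, and \ref{lemma35}. Two small remarks: the reduction to the locus $\{\eta\land d\eta=0\}$ is unnecessary, since you end up verifying $\hat\beta_R\land d\eta+d\hat\beta_R\land\eta>0$ on all of $M$ anyway; and on $N_A$ your three ``bad'' terms combine exactly to $B_E+B_F$ (check the signs via $dw\land\hat\beta_{E,F}\land\eta=\hat\beta_{E,F}\land\eta\land dw$), so condition~(b) of Lemma~\ref{lemma41} already bounds them by $(1-\tau)\,d\hat\beta_E\land\eta+\tau\,d\hat\beta_F\land\eta$ without any need to increase $s$.
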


\begin{proof}
Let $\alpha$ be a contact form whose Reeb flow is carried by $P$ given in Theorem~\ref{thm1}. 
On $N_P$, it is given, in Lemma~\ref{lemma43}, as $\alpha=\hat\beta+R\eta$.
Thus we have
\[
   \alpha\land d\eta+d\alpha\land \eta=
   \hat\beta\land d\eta+d\hat\beta\land\eta+2R\eta\land d\eta>0
\]
by Lemmas~\ref{lemma42a} and~\ref{lemma35}. On $N_D$,
$\alpha$ is given, in~\eqref{eq42}, as $\alpha_R=(1-\tau(w))\hat\beta_E+\tau(w)\hat\beta_F+R\eta$. Thus we have
\[
\begin{split}
\alpha\land d\eta&+d\alpha\land \eta
=\frac{d\tau}{dw}(w)(-\hat\beta_E+\hat\beta_F)\land\eta\land dw \\
&+((1-\tau(w))\hat\beta_E+\tau(w)\hat\beta_F)\land d\eta 
+((1-\tau(w))d\hat\beta_E+\tau(w)d\hat\beta_F)\land\eta \\
&+2R\eta\land d\eta.
\end{split}
\]
This is positive on $N_A$ by the condition~(b) in Lemma~\ref{lemma41} and Lemma~\ref{lemma35}.
On $N_D\setminus N_A$, 
%it is 
we have $\alpha\land d\eta=((1-\tau(w))d\hat\beta_E+\tau(w)d\hat\beta_F)\land dw$
since $\eta_D=dw$ by Lemma~\ref{lemmaC}~(1), which is positive by the condition~(a) in Lemma~\ref{lemma41}.
This completes the proof.
\end{proof}

%%%%%%%%%%%%%%%%%

Next, we explain how to choose a coordinate system for a given contact form on $M$
and a positive flow-spine $P$ that carries its Reeb flow.
For each vertex $v_j$ of $P$, we choose a neighborhood $\Nbd(v_j;M)$ of $v_j$ in $M$.
Let $\pr$ be the projection from $\Nbd(v_j;M)$ to $\Real^2$ that identifies points connected by an orbit of the Reeb flow in  $\Nbd(v_j;M)$.
Choose the coordinates $x_j^v$ and $y_j^v$ on $\pr(S(P))$ as shown in Figure~\ref{fig6-1}.

\begin{figure}[htbp]
\begin{center}
\includegraphics[width=5cm, bb=220 587 374 713]{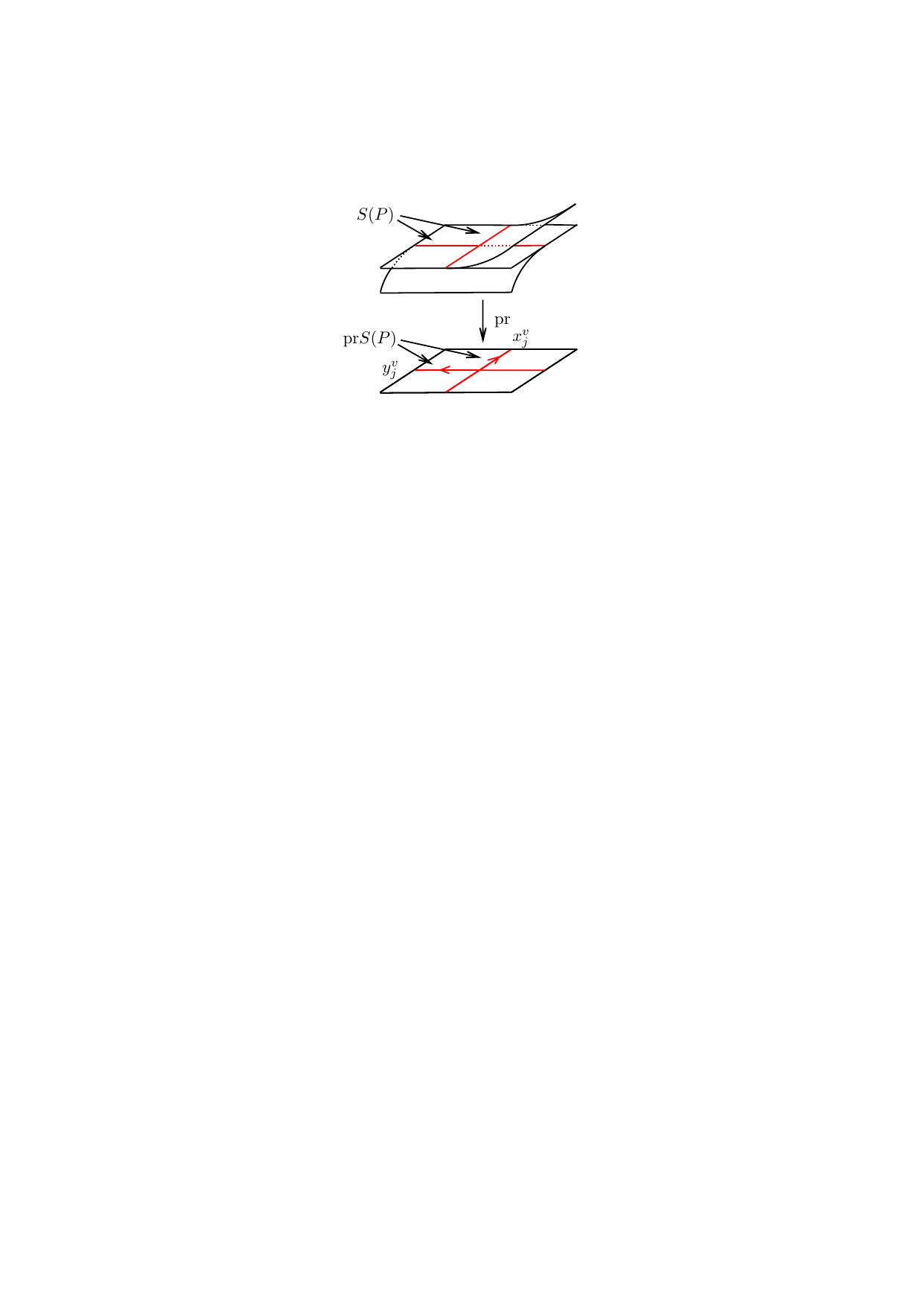}
\caption{The coordinates $x_j^v$ and $y_j^v$ on $\pr(S(P))$.}\label{fig6-1}
\end{center}
\end{figure}

In $\Nbd(v_j;M)$, set the box $[0,3]^2\times [0,5]$ with coordinates $(x_j^v, y_j^v, t_j^v)$, 
that is used in Section~\ref{sec52} to define $B_j^v$, so that 
\begin{itemize}
\item[(i)] the Reeb vector field $R_\alpha$ is parallel to $\frac{\partial}{\partial t_j^v}$ in the same direction, that is, there exists a positive smooth function $c:[0,3]^2\times [0,5]\to \Real$ such that $\frac{\partial}{\partial t_j^v}=c(x_j^v,y_j^v,t_j^v) R_\alpha$ in $[0,3]^2\times [0,5]$, and
% and $c(x_j^v,y_j^v,t_j^v) =c_0$ for some $c_0>0$ in $\Nbd(\partial B_j^v;B_j^v)$, and
\item[(ii)] $P\cap B_j^v$ lies in the core of $B_j^v$.
\end{itemize}

%Since the range of the coordinate $t_j^k$ is fixed as $[0,5]$, if $c$ is larger then $B_j^v$ becomes thinner in $M$. We then isotope $P$ in a neighborhood of $B_j^v$ further so that it lies in the position of the core of $B_j^v$, see Figure~\ref{fig6-2}.
%
%\begin{figure}[htbp]
%\begin{center}
%\includegraphics[width=12cm]{fig6-2.eps}
%\caption{Set the box $[0,3]^2\times [0,5]$ and isotope the flow-spine $P$.}\label{fig6-2}
%\end{center}
%\end{figure}

The rest pieces $B_j^e$ and $N_{\mathcal R_i}$ of $N(P)$ are chosen canonically. We then choose the coordinates $(u,v,w)$ on $N_D$ so that $\frac{\partial}{\partial w}=C(u,v)R_\alpha$ on $(u,v,w)$, where $C(u,v)$ is a positive smooth function on $(u,v)$. 

%We may assume that its gluing map $g_{R_i}$ is given as~\eqref{eq30}.
%In particular, the coordinates $(r_i,\theta_i,t_i)$ satisfies $\alpha\left(\frac{\partial}{\partial r_i}\right)<0$ on $N_{R_i}\cap g^{-1}_{R_i}(W_{i,j})$.

We further assume, by choosing $B_j^v$ and $B_j^e$ sufficiently small, that 
\begin{itemize}
\item[(iii)] the coordinate system of $M$ obtained as above is non-degenerate, and
\item[(iv)] the partial derivatives of $C(u,v)$ are sufficiently close to $0$ on $\check W$.
\end{itemize}
The condition~(iv) is satisfied since as $B_j^v$ gets smaller the lengths of Reeb orbits in $\check W_{i,j}$ get close to the same length.

%Let $\mathcal C$ be denote this nondegenerate coordinate system and set $\eta$ to be the reference $1$-form of $(P,\mathcal C)$.

\begin{lemma}\label{lemma52}
Let $P$ be a positive flow-spine of $M$
and $\alpha$ be a contact form on $M$ whose Reeb flow is carried by $P$.
Choose a non-degenerate coordinate system $\mathcal C$ of $M$ with respect to $P$ as above. Assume that, for each $B_j^v$, the coordinate $x_j^v$ satisfies 
$\alpha\left(\frac{\partial}{\partial x_j^v}\right)<0$ in the box $[0,3]^2\times [0,5]$ that contains $B_j^v$.
Then the reference $1$-form $\eta$ of $(P,\mathcal C)$ satisfies $\alpha\land d\eta+d\alpha\land \eta>0$.
\end{lemma}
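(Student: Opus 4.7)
The plan is to prove $\alpha\wedge d\eta+d\alpha\wedge\eta>0$ pointwise by a case analysis on the pieces of the decomposition used in Section~5 to construct $\eta$: namely $N_P\setminus\bigcup W_{i,j}$, each $W_{i,j}\subset N_{\mathcal R_i}$, $N_D\setminus N_A$, $N_A\setminus(\hat W\cup\check W)$, and $\hat W\cup\check W$. By the construction of the non-degenerate coordinate system $\mathcal C$, on each piece exactly one coordinate vector field $\partial_t\in\{\partial/\partial t_j^k,\partial/\partial t_i,\partial/\partial w\}$ is a positive multiple of the Reeb vector field $R_\alpha$, and $\eta$ takes one of the explicit local forms of Lemmas~\ref{lemmaB},~\ref{lemmaC},~\ref{lemmaA}, and~\ref{lemmaD}. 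The Reeb identity $d\alpha(R_\alpha,\cdot)=0$ then kills every component of $d\alpha$ involving the flow direction in these local coordinates.

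On the pieces where $\eta$ is closed---$N_P\setminus\bigcup W_{i,j}$ (by Lemma~\ref{lemma31}) and $N_D\setminus N_A$ (by Lemma~\ref{lemmaC}(1))---we have $\eta=dt$, so the claim reduces to $d\alpha\wedge dt>0$. Writing $d\alpha=A\,dx\wedge dy$ in coordinates transverse to the flow, the contact condition $\alpha\wedge d\alpha=A\,\alpha(\partial_t)\,dx\wedge dy\wedge dt>0$ forces $A>0$, giving the inequality. On each $W_{i,j}$, where Lemmas~\ref{lemmaB} and~\ref{lemma31} give $\eta=dt_i+\sigma(r_i)h'_{i,j}(\theta_i)\,d\theta_i$ and $d\eta=\sigma'(r_i)h'_{i,j}(\theta_i)\,dr_i\wedge d\theta_i$, the Reeb identity annihilates the wedge of $d\alpha$ with the spurious $\sigma h'\,d\theta_i$-term, while $\alpha\wedge d\eta=\sigma' h'\,\alpha(\partial/\partial t_i)\,dr_i\wedge d\theta_i\wedge dt_i\geq 0$ by the monotonicity of $\sigma$ and $h_{i,j}$. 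The intermediate piece $N_A\setminus(\hat W\cup\check W)$ is handled by the same scheme: Lemma~\ref{lemmaC}(2)--(4) reduces $d\eta$ to the cross-term $\tau'(w)\,dw\wedge(\eta_F-\eta_E)$, whose contribution either vanishes or aligns with the strictly positive contribution from $d\alpha\wedge\eta$.

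The main obstacle is the analysis on $\hat W_{i,j}\cup\check W_{i,j}\subset N_D$. Using the explicit formulas of Lemmas~\ref{lemmaA} and~\ref{lemmaD} together with the identities $dw=g_E^*dt_i$ on $\hat W$ and $dw=g_F^*dt_i$ on $\check W$, one finds in the $(r_i,\theta_i,w)$-coordinates induced by $g_E$ respectively $g_F$ that
\[
\eta=dw+\phi\,h'_{i,j}(\theta_i)\,d\theta_i,\qquad d\eta=h'_{i,j}(\theta_i)\bigl(\partial_{r_i}\phi\,dr_i+\partial_w\phi\,dw\bigr)\wedge d\theta_i,
\]
where $\phi=(1-\tau(w))\sigma(r_i)+\tau(w)$ on $\hat W_{i,j}$ and $\phi=\tau(w)\sigma(r_i)$ on $\check W_{i,j}$; in both cases $\partial_{r_i}\phi\geq 0$ and $\partial_w\phi\geq 0$ by monotonicity of $\sigma$ and $\tau$. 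Writing $\alpha=\alpha_r\,dr_i+\alpha_\theta\,d\theta_i+\alpha_w\,dw$ and applying the Reeb identity with $R_\alpha\parallel\partial/\partial w$ reduces $d\alpha$ to $A\,dr_i\wedge d\theta_i$ with $A>0$, and one obtains
\[
\alpha\wedge d\eta+d\alpha\wedge\eta=\bigl[A+h'_{i,j}(\theta_i)\bigl(\alpha_w\,\partial_{r_i}\phi-\alpha_r\,\partial_w\phi\bigr)\bigr]\,dr_i\wedge d\theta_i\wedge dw.
\]

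This is where the hypothesis $\alpha(\partial/\partial x_j^v)<0$ is essential: the gluing map $g_{\mathcal R_i}$ of Section~5.3 identifies $\partial/\partial r_i$ on $W_{i,j}$ with $\partial/\partial x_j^v$ on $B_j^v$, and because $g_D$ keeps the first two coordinates independent of $w$, this identification extends to $\hat W_{i,j}\cup\check W_{i,j}$. Hence $\alpha_r=\alpha(\partial/\partial x_j^v)<0$ there, and combined with $\alpha_w>0$ and the nonnegative factors $h'_{i,j},\partial_{r_i}\phi,\partial_w\phi$, the bracketed expression is strictly positive. The most delicate step is the explicit bookkeeping of $\eta$ on $\hat W$ and $\check W$ above, which requires careful tracking of the interpolating function $\tau(w)$, the pullbacks $g_E^*,g_F^*$, and the $w$-invariant extension conventions of Section~5.
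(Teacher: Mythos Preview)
Your case analysis and your unified formula on $\hat W_{i,j}\cup\check W_{i,j}$ via the interpolant $\phi$ match the paper's approach, and the displayed identity $\alpha\wedge d\eta+d\alpha\wedge\eta=[A+h'_{i,j}(\alpha_w\,\partial_{r_i}\phi-\alpha_r\,\partial_w\phi)]\,dr_i\wedge d\theta_i\wedge dw$ is correct. The genuine gap is in your justification of $\alpha_r<0$ on $\check W_{i,j}$. You argue that the identification $\partial/\partial r_i=\partial/\partial x_j^v$ on $W_{i,j}$ ``extends to $\hat W_{i,j}\cup\check W_{i,j}$'' because $g_D$ keeps the first two coordinates independent of $w$, and then invoke the hypothesis. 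This works on $\hat W_{i,j}$, which by condition~(B-D') actually sits inside the box $[0,3]^2\times[0,5]$ where the hypothesis is stated. But $\check W_{i,j}$ is \emph{not} contained in that box: only the slice $F\cap\check W_{i,j}$ is identified (via $g_F$ and $g_{\mathcal R_i}$) with $W_{i,j}$ inside $B_j^v$; the rest of $\check W_{i,j}$ lies in the interior of the $3$-ball $N_D$, where the coordinate $x_j^v$ is undefined and the hypothesis says nothing.

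The paper closes this gap by tracking how $\alpha(\partial/\partial r_i)$ varies along $\partial/\partial w$. Since $\partial/\partial w=C(u,v)R_\alpha$, one has $\mathcal L_{\partial/\partial w}\alpha=d(\imath_{\partial/\partial w}\alpha)+\imath_{\partial/\partial w}d\alpha=dC(u,v)$. This is precisely why the setup preceding the lemma imposes condition~(iv), that the partial derivatives of $C(u,v)$ be sufficiently close to $0$ on $\check W$: under that assumption, $\alpha_r<0$ at the slice $w=1$ (where one is in the box) propagates to all of $\check W_{i,j}$. Your outline neither invokes condition~(iv) nor supplies any substitute for this propagation step, so as written the inequality on $\check W$ is unproved.
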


\begin{proof}
%\textcolor{red}{Adding  $N_{\mathcal R_i}$'s and $N_D$ to the given $B_j^v$'s and $B_j^e$'s,}
%we obtain a decomposition of $M$ into $B_j^v$, $B_j^e$, $N_{\mathcal R_i}$ and $N_D$ as explained before Lemma~\ref{lemma52}. Let $\mathcal C$ be the non-degenerate coordinate system obtained from this decomposition and $\eta$ be the reference $1$-form of $(P,\mathcal C)$. We check that $\alpha\land d\eta+d\alpha\land\eta>0$ holds on $M$.
%
On $B_j^k\setminus \bigcup_i g_{\mathcal R_i}(N_{\mathcal R_i})$ for $k=e$ or $v$, we have 
$d\eta=d(dt_j^k)=0$ and $d\alpha\land\eta>0$ since, with the coordinates $(x_j^k,y_j^k,t_j^k)$ of $B_j^k$, 
\[
   d\alpha\land \eta\left(\frac{\partial}{\partial x_j^k}, \frac{\partial}{\partial y_j^k}, R_\alpha\right)
   =dt_j^k(R_\alpha)d\alpha\left(\frac{\partial}{\partial x_j^k}, \frac{\partial}{\partial y_j^k}\right).
\]
It satisfies $dt_j^k(R_\alpha)=dt_j^k\left(
\frac{1}{c}
\frac{\partial}{\partial t_j^k}\right)=
\frac{1}{c}
>0$, where $c=c(x_j^k, y_j^k, t_j^k)$.
%due to the choice of the decomposition 
We also have
$d\alpha\left(\frac{\partial}{\partial x_j^k}, \frac{\partial}{\partial y_j^k}\right)>0$ otherwise
$\alpha$ cannot be a positive contact form.
Thus $\alpha\land d\eta+d\alpha\land\eta>0$ holds on $B_j^k\setminus \bigcup_i g_{\mathcal R_i}(N_{\mathcal R_i})$.

On $N_{\mathcal R_i}\setminus \bigcup_jW_{i,j}$, we have $d\eta=0$ by Lemma~\ref{lemma31}.
The inequality $d\alpha\land\eta>0$ follows from Lemma~\ref{lemmaB} and the same observation used in the last paragraph.
Thus $\alpha\land d\eta+d\alpha\land\eta>0$ holds on $N_{\mathcal R_i}\setminus \bigcup_jW_{i,j}$.

On $W_{i,j}$, by Lemmas~\ref{lemmaB} and~\ref{lemma31},
\[
\alpha\land d\eta+d\alpha\land\eta=\frac{d\sigma}{dr_i}(r_i)\frac{dh_{i,j}}{d\theta_i}(\theta_i)\alpha\land dr_i\land d\theta_i
+d\alpha\land \left(dt_i+\sigma(r_i)\frac{dh_{i,j}}{d\theta_i}(\theta_i)d\theta_i\right).
\]
The factors $\frac{d\sigma}{dr_i}(r_i)$ and $\frac{dh_{i,j}}{d\theta_i}(\theta_i)$ are non-negative and the $3$-form $\alpha\land dr_i\land d\theta_i$ is positive 
since 
%the decomposition of $M$ is chosen such that 
$\frac{\partial}{\partial t_i}=cR_\alpha$ implies
$dr_i\land d\theta_i(R_\alpha,\cdot)=0$.
Thus the first term is non-negative.
%From the choice of the decomposition of $M$, we have 
We have $dt_i(R_\alpha)>0$ and $d\theta_i(R_\alpha)=0$,
%. Also, 
and also have
$d\alpha\left(\frac{\partial}{\partial r_i},\frac{\partial}{\partial \theta_i}\right)>0$
otherwise $\alpha$ cannot be a positive contact form.
Hence the second term is positive. 
Thus $\alpha\land d\eta+d\alpha\land\eta>0$ holds on $W_{i,j}$.

On $N_D$, 
%we choose the coordinates $(u,v,w)$ on $N_D=D^2\times [0,1] such that $R_\alpha$ is positively transverse to $\langle \frac{\partial}{\partial u}, \frac{\partial}{\partial v}\rangle$. With these coordinates,
\[
   d\alpha\land\eta\left(\frac{\partial}{\partial u},\frac{\partial}{\partial v},R_\alpha \right)
   =\eta(R_\alpha)d\alpha\left(\frac{\partial}{\partial u},\frac{\partial}{\partial v} \right).
\]
%where $\eta(R_\alpha)>0$ 
%since $\eta=(1-\tau(w))\eta_E+\tau(w) \eta_F$ and $\eta_P(R_\alpha)>0$ on $E\cup F$.
Since $\frac{\partial}{\partial w}=C(u,v)R_\alpha$, we have $\eta(R_\alpha)>0$ 
on $N_D\setminus N_A$ by Lemma~\ref{lemmaC}~(1).
The inequality $\eta(R_\alpha)>0$ also holds on $N_A\setminus (\hat W\cup \check W)$
since $\eta$ is a linear sum of some of $dt_j^k$ for $k=e, v$ and $dt_i$
and on $\hat W\cup \check W$ by Lemmas~\ref{lemmaA} and~\ref{lemmaD}.
We also have $d\alpha\left(\frac{\partial}{\partial u},\frac{\partial}{\partial v} \right)>0$ 
otherwise $\alpha$ cannot be a contact form. Thus $d\alpha\land\eta>0$.
%.
It remains to show the inequality $\alpha\land d\eta\geq 0$ on $N_D$. 
%
%Finally, we check $\alpha\land d\eta\geq 0$ on $N_D$. 
On $N_D\setminus (\hat W\cup \check W)$, we have $d\eta=0$ by Lemma~\ref{lemmaC}~(1) and~(2).

We check $\alpha\land d\eta\geq 0$ on $\hat W_{i,j}$.
From~\eqref{eq33}, the $2$-form $d\eta$ on $N_D$ is given as
\[
d\eta=-\frac{d\tau}{dw}(w)dw\land \eta_E
+\frac{d\tau}{dw}(w)dw\land \eta_F
+(1-\tau(w))d\eta_E+\tau(w)d\eta_F.
\]
By Lemma~\ref{lemmaA}~(1) and~(2), 
$dw\land \eta_E$ is the extension of 
$g_E^*\left(\sigma(r_i)\frac{dh_{i,j}}{d\theta_i}(\theta_i)dt_i\land d\theta_i\right)$
and $d\eta_E$ is the extension of 
$g_E^*\left(\frac{d\sigma}{dr_i}(r_i)\frac{dh_{i,j}}{d\theta_i}(\theta_i)dr_i\land d\theta_i\right)$
to the whole $\hat W_{i,j}$.
Similarly, by Lemma~\ref{lemmaA}~(3),
$dw\land \eta_F$ is the extension of 
$g_E^*\left(\frac{dh_{i,j}}{d\theta_i}(\theta_i)dt_i\land d\theta_i\right)$
to $\hat W_{i,j}$ and $d\eta_F=0$ on $\hat W_{i,j}$.
Hence
\[
d\eta=g_E^*\left((1-\sigma(r_i))\frac{d\tau}{dw}(w)\frac{dh_{i,j}}{d\theta_i}(\theta_i)dt_i\land d\theta_i+(1-\tau(w))\frac{d\sigma}{dr_i}(r_i)\frac{dh_{i,j}}{d\theta_i}(\theta_i)dr_i\land d\theta_i\right).
\]
%and we have
On $E\cap\hat W_{i,j}$, we have
\[
\begin{split}
\alpha\land d\eta\left(\frac{\partial}{\partial r_i},\frac{\partial}{\partial \theta_i},
R_\alpha \right)
=&
-(1-\sigma(r_i))
\frac{d\tau}{dw}(w)\frac{dh_{i,j}}{d\theta_i}(\theta_i)\alpha\left(\frac{\partial}{\partial r_i}\right)dt_i(R_\alpha) \\
& +  
(1-\tau(w))\frac{d\sigma}{dr_i}(r_i)\frac{dh_{i,j}}{d\theta_i}(\theta_i),
%\geq 0
\end{split}
\]
where
the coordinates $(r_i,\theta_i,w)$ are regarded as those on $\hat W_{i,j}$ 
according to the product structure of $\hat W_{i,j}$ in $N_D=D^2\times [-\varepsilon, 1+\varepsilon]$.
Since $\hat W_{i,j}\subset [0,3]^2\times [0,5]$, the inequality $\alpha\left(\frac{\partial}{\partial x_j^v}\right)<0$ holds on $\hat W_{i,j}$ by the assumption, which implies $\alpha\left(\frac{\partial}{\partial r_i}\right)<0$. 
Hence we have $\alpha\land d\eta\left(\frac{\partial}{\partial r_i},\frac{\partial}{\partial \theta_i}, R_\alpha \right)\geq 0$.

We check $\alpha\land d\eta\geq 0$ on $\check W_{i,j}$.
By Lemma~\ref{lemmaD},
\[
d\eta=\frac{d\tau}{dw}(w)g^*_F\left(\sigma(r_i)\frac{dh_{i,j}}{d\theta_i}(\theta_i)dt_i\land d\theta_i\right)
+\tau(w)g_F^*\left(\frac{d\sigma}{dr_i}(r_i)\frac{dh_{i,j}}{d\theta_i}(\theta_i)dr_i\land d\theta_i\right).
\]
%and hence 
On $F\cap\check W_{i,j}$, we have
\[
\begin{split}
\alpha\land d\eta\left(\frac{\partial}{\partial r_i},\frac{\partial}{\partial \theta_i},
R_\alpha \right)
=
&-\frac{d\tau}{dw}(w)\sigma(r_i)\frac{dh_{i,j}}{d\theta_i}(\theta_i)\alpha\left(\frac{\partial}{\partial r_i}\right)dt_i(R_\alpha) \\
& + 
\tau(w)\frac{d\sigma}{dr_i}(r_i)\frac{dh_{i,j}}{d\theta_i}(\theta_i),
%\geq 0,
\end{split}
\]
where $(r_i, \theta_i, t_i)$ are the coordinates of $W_{i,j}\subset N'_{\mathcal R_i}$ and 
$(r_i,\theta_i,w)$ are regarded as the coordinates on $\check W_{i,j}$ 
according to the canonical product structure of $\check W_{i,j}$ in $N_D=D^2\times [-\varepsilon, 1+\varepsilon]$, see Figure~\ref{fig6-3}.
Since $g_{\mathcal R_i}(W_{i,j})\subset B_j^v$, 
$\alpha\left(\frac{\partial}{\partial r_i}\right)<0$ holds on $W_{i,j}$ by the assumption.
Moreover, the same inequality holds on the whole $\check W_{i,j}$
since $\frac{\partial}{\partial w}=C(u,v)R_\alpha$ satisfies
\[
   \mathcal L_{\frac{\partial}{\partial w}}\alpha=d(\imath_{\frac{\partial}{\partial w}}\alpha)=dC(u,v),
\]
which is sufficiently close to $0$ by the assumption~(iv),
where $\mathcal L_X$ is the Lie derivative and $\imath_X$ 
is the interior product for a vector field $X$.
Hence we have $\alpha\land d\eta\left(\frac{\partial}{\partial r_i},\frac{\partial}{\partial \theta_i},R_\alpha \right)\geq 0$. 

\begin{figure}[htbp]
\begin{center}
\includegraphics[width=12cm, bb=130 532 475 711]{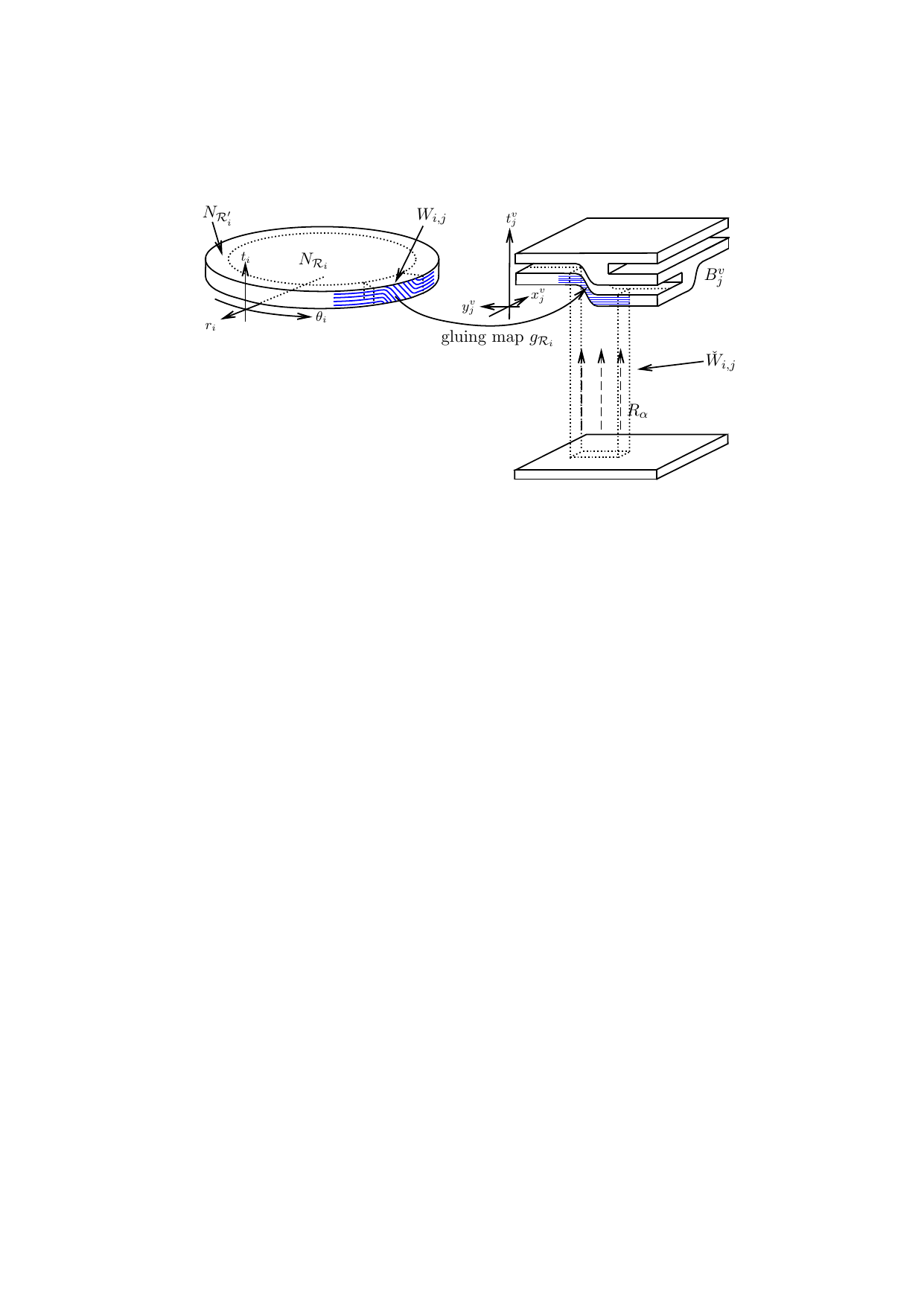}
\caption{The coordinates $(r_i, \theta_i, w)$ on $\check W_{i,j}$.}\label{fig6-3}
\end{center}
\end{figure}

Thus $\alpha\land d\eta+d\alpha\land\eta>0$ is satisfied on the whole $N_D$.
This completes the proof.
\end{proof}

%%%%%%%%%%%%%%%%%

\begin{lemma}\label{lemma55}
For $i=0,1$, 
let $\mathcal C_i$ be a non-degenerate coordinate system of $M$ with respect to a positive flow-spine $P$ and $\eta_i$ be the reference $1$-form of $(P,\mathcal C_i)$.
%let $\eta_i$ be a reference $1$-form of $(P,g_D)$, where $P$ is a positive flow-spine $P$ and $g_D$ is a nondegenerate gluing map,
%and 
Let
$\alpha_i$ be a contact form on $M$ such that $\alpha_i\land d\eta_i+d\alpha_i\land\eta_i>0$
and 
%its Reeb vector field is a flow of $P$.
its Reeb flow is carried by $P$.
Then there exists a one-parameter family of contact forms 
connecting $\alpha_0$ and $\alpha_1$.
\end{lemma}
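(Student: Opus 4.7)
The plan is to reduce the statement to Lemma~\ref{lemma53} by routing each $\alpha_i$ through a ``standard'' contact form built from $\eta_i$, and then connecting these standard forms through a continuous deformation of the coordinate system.

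First, for each $i \in \{0, 1\}$, Lemma~\ref{lemma54} applied to $(P, \mathcal{C}_i)$ produces a contact form $\tilde\alpha_i$ on $M$ whose Reeb flow is carried by $P$ and which satisfies $\tilde\alpha_i \land d\eta_i + d\tilde\alpha_i \land \eta_i > 0$. Since $\alpha_i$ and $\tilde\alpha_i$ both satisfy this positivity with respect to the \emph{same} $\eta_i$, Lemma~\ref{lemma53} joins them by a one-parameter family of contact forms. It therefore suffices to connect $\tilde\alpha_0$ to $\tilde\alpha_1$.

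Next, I would construct a continuous family $\{\mathcal{C}_s\}_{s \in [0,1]}$ of non-degenerate coordinate systems of $M$ with respect to $P$ interpolating between $\mathcal{C}_0$ and $\mathcal{C}_1$. Such interpolation exists because, with the positive flow-spine $P$ fixed, the embeddings into $M$ of the pieces $B_j^e$, $B_j^v$, $N_{\mathcal{R}_i}$, and $N_D$ of the prescribed combinatorial type form a path-connected space, while the auxiliary monotone functions $h$, $\hat{h}$, $\sigma$, $\tau$ and the associated gluing maps live in convex subsets of smooth function spaces compatible with the non-degeneracy conditions (B-D'), (R-E), (R-F), (B-E), (B-F), and (A). This yields a continuous family $\eta_s$ of reference $1$-forms. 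Performing the construction of Theorem~\ref{thm1} continuously in $s$, I would choose a $1$-form $\beta_s$ on $P$ satisfying the conclusion of Lemma~\ref{lemma41} and depending continuously on $\mathcal{C}_s$, form $\widehat{\beta_s}$ on $N_P$, and set $\tilde\alpha_{s, R} = \widehat{\beta_s} + R\eta_s$ on $N_P$, extending by the formula~\eqref{eq42} over $N_D$. Compactness of $[0, 1]$ permits a single sufficiently large $R$ to be chosen so that $\tilde\alpha_{s, R}$ is a contact form with Reeb flow carried by $P$ for every $s \in [0, 1]$, and $\tilde\alpha_{s, R} \land d\eta_s + d\tilde\alpha_{s, R} \land \eta_s > 0$ by the argument of Lemma~\ref{lemma54}. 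The endpoints $\tilde\alpha_{0, R}$ and $\tilde\alpha_{1, R}$ are then linked to $\tilde\alpha_0$ and $\tilde\alpha_1$ respectively via Lemma~\ref{lemma53}, and concatenating these paths with $\{\tilde\alpha_{s, R}\}$ yields the desired one-parameter family of contact forms from $\alpha_0$ to $\alpha_1$.

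The main obstacle is the second step: producing the continuous interpolation $\{\mathcal{C}_s\}$ while simultaneously preserving all the mutual compatibility conditions and the non-degeneracy. The uniform choice of $R$ in the third step is routine by compactness, so the genuine difficulty lies in the simultaneous deformation of the many embeddings, coordinates, and smooth functions that together constitute a non-degenerate coordinate system.
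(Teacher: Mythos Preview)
Your overall architecture---reduce to the ``standard'' forms via Lemma~\ref{lemma54}, connect those to $\alpha_0,\alpha_1$ via Lemma~\ref{lemma53}, and then interpolate between the standard forms by deforming the coordinate system---is the same as the paper's. The gap is exactly where you yourself flag it: the claim that one can produce a \emph{continuous} family $\{\mathcal C_s\}_{s\in[0,1]}$ of \emph{non-degenerate} coordinate systems is not justified, and in fact is generally false.

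The obstruction is combinatorial. A non-degenerate coordinate system encodes, among other things, the mutual position of the graphs $G_E=\pr_{D^2}(S_E)$ and $G_F=\pr_{D^2}(S_F)$ on the disk; condition~(A) and the construction of $\beta$ in Lemma~\ref{lemma41} both depend on the decomposition of each region $R_i$ into subregions $R_{ij}$ by $G_{R_i}$. As you deform from $\mathcal C_0$ to $\mathcal C_1$, a generic path of such graphs $G_t$ will pass through finitely many degenerate configurations (self-tangencies, triple points, an edge crossing a trivalent vertex). At those instants the combinatorial type of the subdivision changes discretely, condition~(A) can fail, and there is no way to choose the $1$-form $\beta_s$ of Lemma~\ref{lemma41} continuously across the transition. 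So your second step, as stated, does not go through.

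The paper resolves this not by avoiding the degenerate instants but by bridging across them. One chooses a generic deformation $G_t$, records the finitely many degenerate times $t_1,\ldots,t_s$, and at each $t_j$ builds a single contact form $\alpha_{t_j}$ via Theorem~\ref{thm1}. One then perturbs the coordinate system slightly to both sides, obtaining non-degenerate systems $\mathcal C_{t_j\pm\varepsilon}$ whose combinatorics match those of $G_{t_j\mp}$ and $G_{t_j\pm}$ respectively, and---after a small isotopy of $P$ near each vertex---invokes Lemma~\ref{lemma52} to show that the \emph{same} $\alpha_{t_j}$ satisfies $\alpha_{t_j}\land d\eta_{t_j\pm\varepsilon}+d\alpha_{t_j}\land\eta_{t_j\pm\varepsilon}>0$. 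On each interval $[t_j+\varepsilon,t_{j+1}-\varepsilon]$ the combinatorics are constant, so there a smooth family of non-degenerate $\mathcal C_t$ exists, and the openness/connectedness furnished by Lemmas~\ref{lemma53} and~\ref{lemma54} lets one thread a path of contact forms through. The essential ingredient your argument is missing is Lemma~\ref{lemma52}, which is precisely what allows a single contact form to be compatible with reference $1$-forms on both sides of a combinatorial transition.
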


\begin{proof}
For each $i=0,1$, let $G_i$ be the graph obtained as the union of $\pr_{D^2}S_F$ and $\pr_{D^2}S_E$ on the unit disk, where the interior points of the unit disk correspond to the Reeb orbits starting and ending at points on $P$.
Choose a deformation $G_t$ of graph from $G_0$ to $G_1$ in a generic way.
Let $t_1,\ldots,t_s$ be the points in $[0,1]$ at which $G_{t_j}$ has a self-tangency or a triple point or an edge passing through a trivalent vertex. We call them degenerate times.
For each $t_j$, we may obtain a non-degenerate coordinate system $\mathcal C_{t_j}$ from the graph $G_{t_j}$ canonically. 
Let $P_{t_j}$ denote the flow-spine $P$ in the coordinate system $\mathcal C_{t_j}$
and $\eta_{t_j}$ be the reference $1$-form of $(P_{t_j}, \mathcal C_{t_j})$.
We then make a contact form $\alpha_{t_j}$ on $M$ as in Theorem~\ref{thm1}. 
%Note that  the coordinate system $\mathcal C_{t_j}$ is introduced only for making the contact form $\alpha_{t_j}$. We will not use it in the following discussion.

%Next we perturb the polyhedron $P_{t_j}$ slightly with fixing the contact form $\alpha_{t_j}$ so that the degenerate time of $G_{t_j}$ is resolved. 
Next we perturb the coordinate system $\mathcal C_{t_j}$ slightly with fixing the contact form $\alpha_{t_j}$ so that the degenerate time of $G_{t_j}$ is resolved. 
Figure~\ref{fig8-1} shows an example in the case when an edge passes through a trivalent vertex. The figure in the middle is $G_{t_j}$. By small perturbations of 
%$P_{t_j}$
$\mathcal C_{t_j}$, 
we may obtain the graphs on the left and on the right in the figure.
One of the perturbed graphs can be regarded as $G_{t_j-\varepsilon}$ and the other can be regarded as $G_{t_j+\varepsilon}$.
For each of them, we choose $A_{R_i}$ sufficiently narrow so that it satisfies the condition (A) in Section~\ref{sec61}. 

\begin{figure}[htbp]
\begin{center}
\includegraphics[width=13.5cm, bb=129 604 483 713]{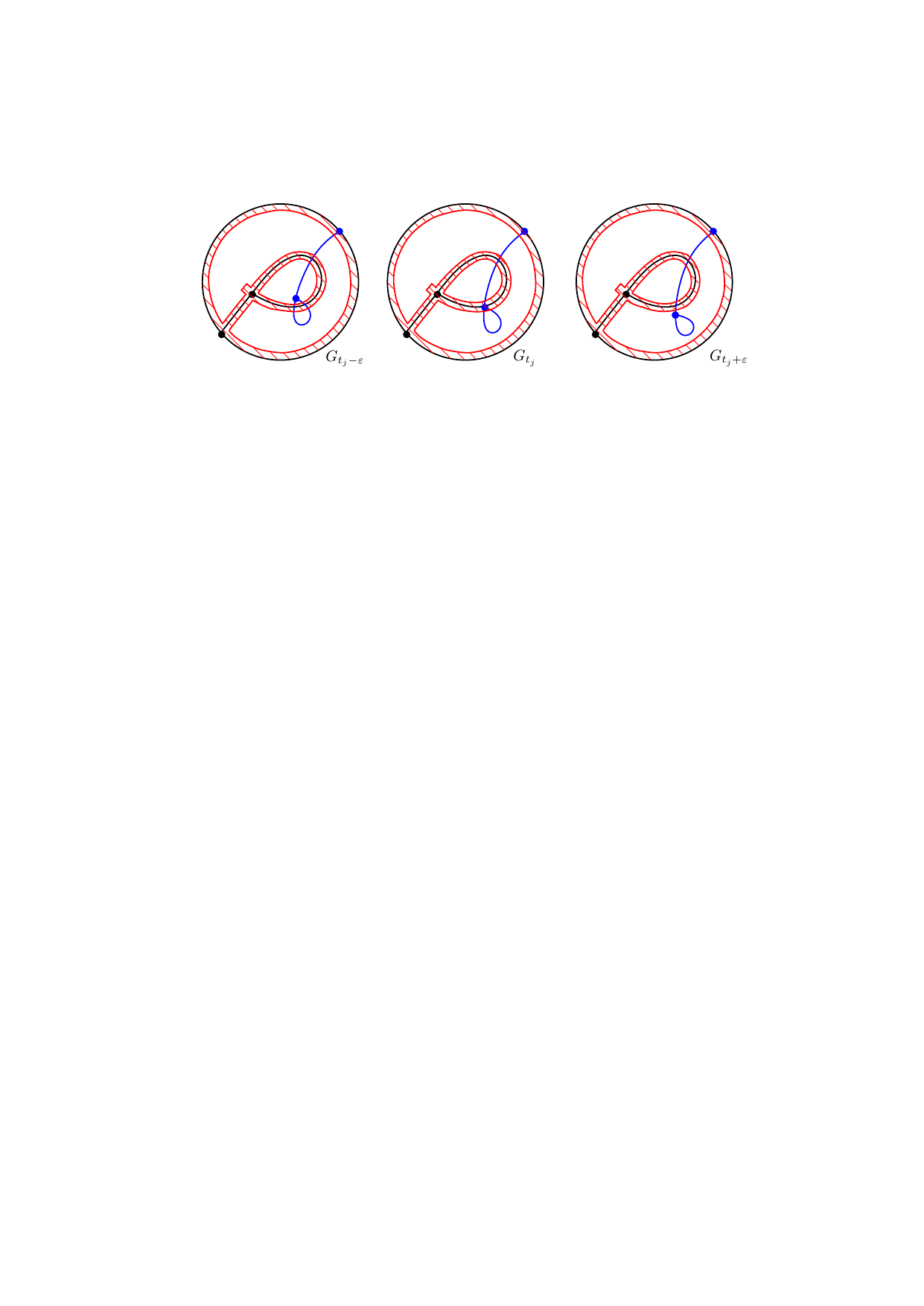}
\caption{Non-degenerate coordinate systems before and after passing through a trivalent vertex.}
\label{fig8-1}
\end{center}
\end{figure}

Let $P_{t_j-\varepsilon}$ and $P_{t_j+\varepsilon}$ denote the flow-spine $P_{t_j}$ in the coordinate systems $C_{t_j-\varepsilon}$ and $C_{t_j+\varepsilon}$, respectively.
We isotope $P_{t_j-\varepsilon}$ and $P_{t_j+\varepsilon}$
in a neighborhood of each vertex so that it satisfies the assumption in Lemma~\ref{lemma52}
if necessary.
 %The latter is done by rotating $P_{t_j}$ in the neighborhood smoothly and adjusting it to a suitable position as shown in Figure~\ref{fig8-2}.
This is done as shown in Figure~\ref{fig8-2}.
The left in the figure is the case where we have $\alpha(\frac{\partial}{\partial x_j^v})>0$, that is, the assumption is not satisfied.
By rotating the flow-spine near the vertex  as shown on the right, we may set the coordinates $(x_j^v, y_j^v, t_j^v)$ so that $\alpha(\frac{\partial}{\partial x_j^v})<0$.

\begin{figure}[htbp]
\begin{center}
\includegraphics[width=12cm, bb=137 583 434 712]{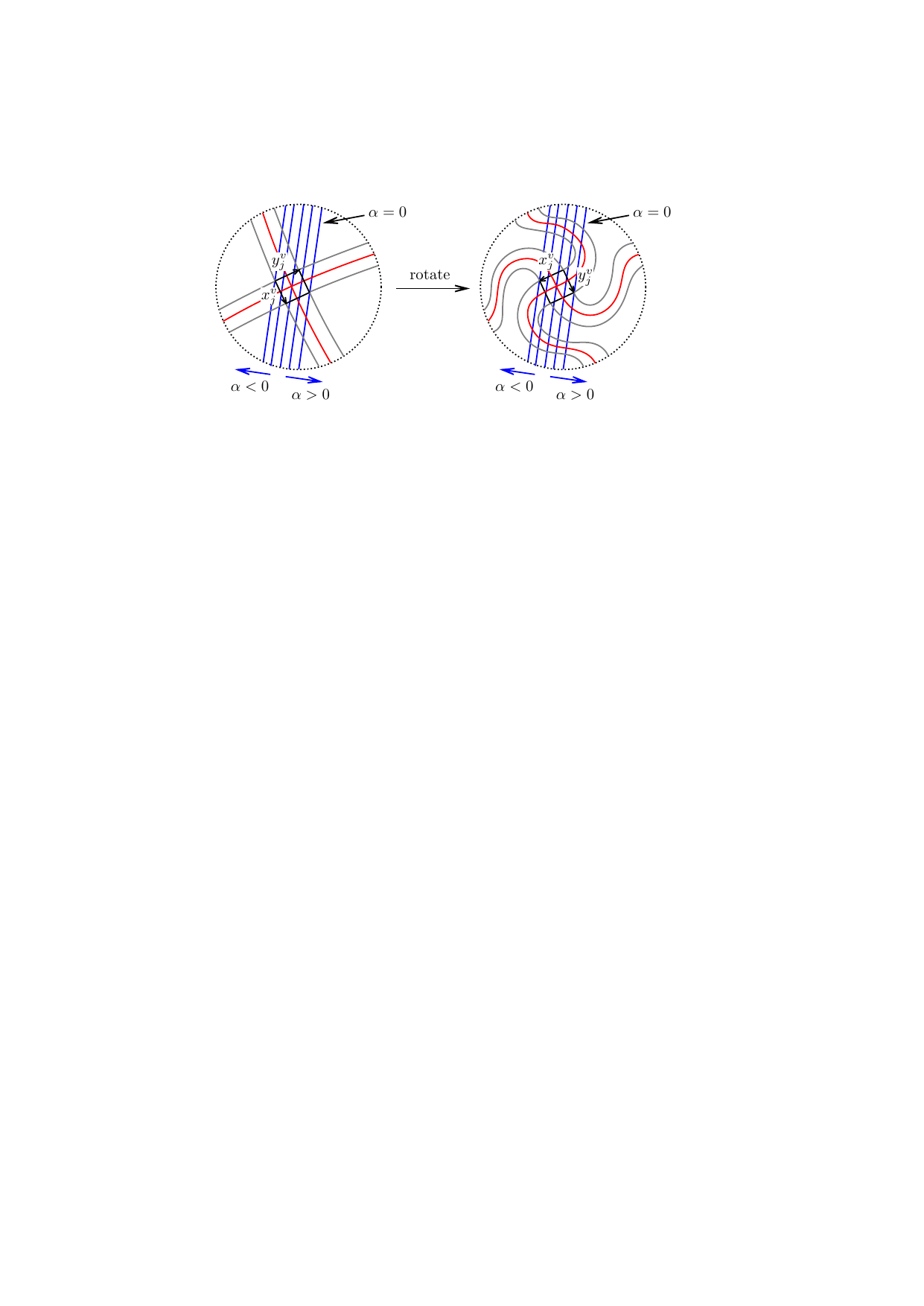}
\caption{Rotate the flow-spine so that the coordinates $(x_j^v, y_j^v, t_j^v)$ satisfy 
$\alpha(\frac{\partial}{\partial x_j^v})<0$.}
\label{fig8-2}
\end{center}
\end{figure}

Applying Lemma~\ref{lemma52} to these isotoped $P_{t_j-\varepsilon}$ and $P_{t_j+\varepsilon}$,
we obtain non-degenerate coordinate systems $\mathcal C_{t_j-\varepsilon}$ and $\mathcal C_{t_j+\varepsilon}$ for  $G_{t_j-\varepsilon}$ and $G_{t_j+\varepsilon}$ whose reference $1$-forms 
 $\eta_{t_j-\varepsilon}$ and $\eta_{t_j+\varepsilon}$ satisfy
$\alpha_{t_j}\land d\eta_{t_j-\varepsilon}+d\alpha_{t_j}\land \eta_{t_j-\varepsilon}>0$ 
and $\alpha_{t_j}\land d\eta_{t_j+\varepsilon}+d\alpha_{t_j}\land \eta_{t_j+\varepsilon}>0$,
respectively. 
We set $\alpha_t=\alpha_{t_j}$ for $t\in [t_j-\varepsilon, t_j+\varepsilon]$.
Set $t_0=0$, $t_{s+1}=1$, $\alpha_t=\alpha_0$ for $t\in [0,\varepsilon]$
and $\alpha_t=\alpha_1$ for $t\in [1-\varepsilon, 1]$.

Now, for each $j=0, \ldots, s$, we choose a smooth family 
$\{\mathcal C_t \mid t\in [t_j+\varepsilon, {t_{j+1}}-\varepsilon]\}$ 
of non-degenerate coordinate systems 
 connecting $\mathcal C_{t_j+\varepsilon}$ and $\mathcal C_{t_{j+1}-\varepsilon}$.
Then a smooth family of reference $1$-form $\eta_t$ is also obtained for $t\in [t_j+\varepsilon, {t_{j+1}}-\varepsilon]$.
For each $t\in [t_j+\varepsilon, {t_{j+1}}-\varepsilon]$, the set of contact forms $\alpha$ 
satisfying $\alpha\land d\eta_t+d\alpha\land\eta_t>0$ is open with Whitney $C^\infty$-topology,
connected by Lemma~\ref{lemma53} and non-empty by Lemma~\ref{lemma54}. 
Therefore there exists a one-parameter family of contact forms connecting $\alpha_{t_j+\varepsilon}$ and $\alpha_{t_{j+1}-\varepsilon}$ that 
satisfies $\alpha_t\land d\eta_t+d\alpha_t\land\eta_t>0$ for $t\in [t_j+\varepsilon, {t_{j+1}}-\varepsilon]$.
Since 
$\alpha_t=\alpha_{t_j}$ for $t\in [t_j-\varepsilon, t_j+\varepsilon]$,
%the contact forms $\alpha_{t_j-\varepsilon}$ and $\alpha_{t_j+\varepsilon}$ is $\alpha_{t_j}$,
the family $\alpha_t$, $t\in [0,1]$ is a one-parameter family of contact forms connecting $\alpha_0$ and $\alpha_1$.
\end{proof}

\begin{proof}[Proof of Theorem~\ref{thm2}]
For each $\alpha_i$, $i=0,1$,
we isotope $P_i$ in a neighborhood of each vertex as shown in Figure~\ref{fig8-2},
if necessary, 
so that it satisfies the assumption in Lemma~\ref{lemma52}.
Applying Lemma~\ref{lemma52}, we obtain a non-degenerate coordinate system $\mathcal C_i$ such that the reference $1$-form $\eta_i$ of $(P_i,\mathcal C_i)$ satisfies $\alpha_i\land d\eta_i+d\alpha_i\land \eta_i>0$.
Thus the contact forms $\alpha_0$ and $\alpha_1$ are connected by a one-parameter family of contact forms by Lemma~\ref{lemma55}.
By the Gray stability~\cite{Gra59}, $\ker\alpha_0$ and $\ker\alpha_1$ are isotopic.
%%and the assertion follows from Gray's stability~\cite{Gra59}.
%The assertion follows from Lemma~\ref{lemma52}, Lemma~\ref{lemma55} and Gray's stability~\cite{Gra59}.
\end{proof}

\end{document}